\documentclass[11pt]{amsart}

\usepackage{txfonts}
\usepackage[T1]{fontenc}
\usepackage{fancyhdr}

\usepackage[utf8]{inputenc} % set input encoding (not needed with XeLaTeX)
\usepackage{geometry} % to change the page dimensions
\geometry{a4paper} % or letterpaper (US) or a5paper or....
\usepackage{booktabs} % for much better looking tables
\usepackage{array} % for better arrays (eg matrices) in maths
\usepackage{paralist} % very flexible & customisable lists (eg. enumerate/itemize, etc.)
\usepackage{verbatim} % adds environment for commenting out blocks of text & for better verbatim
\usepackage{tikz}
\usetikzlibrary{arrows}
\usetikzlibrary{decorations.markings}
\usepackage{subfig}
\usepackage{tabularx}
\usepackage{enumitem}
\usepackage{amsmath,amsthm,amscd}
\usepackage{amssymb,amsfonts}
\usepackage{fancyhdr} % This should be set AFTER setting up the page geometry
\usepackage{pgf,tikz}
\usepackage{caption}
\usepackage{tikz-cd}
\usepackage{tikzscale}
\usetikzlibrary{patterns}
\usepackage{rotating}
\usepackage{xcolor}
\usepackage{lscape}
\usepackage{xspace}
\usepackage{xfrac}
\usepackage{blindtext}
\usepackage{bm}
\usepackage[ruled,vlined]{algorithm2e}

\usetikzlibrary{decorations.markings}
\usetikzlibrary{patterns}
\usetikzlibrary{calc}
\usepackage{soul}

\usepackage{tikz}
        \usetikzlibrary{patterns,hobby}
        \usepackage{pgfplots}
        \pgfplotsset{compat=1.6}

\usepackage{faktor} %\faktor{A}{B}
\usepackage{pgf,tikz}
\usetikzlibrary{arrows.meta}
\usetikzlibrary{decorations.markings}
\usetikzlibrary{patterns}

\setcounter{secnumdepth}{3}
\setcounter{tocdepth}{2}

%\pagestyle{fancy} % options: empty , plain , fancy
%\renewcommand{\headrulewidth}{0pt} % customise the layout...
%\lhead{}\chead{}\rhead{}
%\lfoot{}\cfoot{\thepage}\rfoot{}
%
\geometry{a4paper,top=3cm,bottom=4cm,left=3cm,right=3cm, heightrounded,bindingoffset=0mm}

\usepackage{color}   %May be necessary if you want to color links
\usepackage{hyperref}
\hypersetup{
    colorlinks=true, %set true if you want colored links
    linktoc=all,     %set to all if you want both sections and subsections linked
    linkcolor=blue,  %choose some color if you want links to stand out
    citecolor=blue,
    filecolor=blue,
    urlcolor=blue
}

%%%%% BIBLIOGRAPHY
\usepackage[T1]{fontenc}
%\usepackage[american]{babel}
%\usepackage{csquotes}
% When using prefixed numerical labels, the labels must be assigned
% as the bibliography is generated. That's why we set
% defernumbers=true here:
%\usepackage[style=numeric,defernumbers,backend=bibtex]{biblatex}
\usepackage{hyperref}
\usepackage{nameref}
%\addbibresource{biblatex-examples.bib}
% A catch-all filter for all items which are not assigned to a
% dedicated sub-bibliography:

%

\newcommand{\C}{\mathbb{C}}
\newcommand{\lcm}{\operatorname{lcm}}

\newcommand\PP{{\mathbb P}}
\newcommand\RR{{\mathcal R}}

\newcommand\RH{{\mathcal H}}
\newcommand\RC{{\mathcal C}}
\newcommand\RP{{\mathcal P}}

\newcommand\R{{\mathbb R}}

\newcommand{\pslr}{{\mathrm{PSL}_2 (\mathbb{R})}}
\newcommand{\pslz}{{\mathrm{PSL}_2 (\mathbb{Z})}}

\newcommand{\glplus}{\mathrm{GL}^+(2,\mathbb{R})}
\newcommand{\slz}{{\mathrm{SL}_2 (\mathbb{Z})}}

\newtheorem{thm}{Theorem}[section]
\newtheorem{cor}[thm]{Corollary}
\newtheorem{prop}[thm]{Proposition}
\newtheorem{lem}[thm]{Lemma}
\theoremstyle{definition}
\newtheorem{defn}[thm]{Definition}
\theoremstyle{remark}
\newtheorem{rmk}[thm]{Remark}
\theoremstyle{definition}

\theoremstyle{definition}

\theoremstyle{definition}

\numberwithin{equation}{section}
\pagestyle{plain}
\title{One-dimensional strata of residueless meromorphic differentials}

\author{Myeongjae Lee} 
\address[Myeongjae Lee]{Mathematics Department, Stony Brook University, Stony Brook, NY 11794-3651, USA}
\email{myeongjae.lee@stonybrook.edu}

\author{Guillaume Tahar}
\address[Guillaume Tahar]{Beijing Institute of Mathematical Sciences and Applications, Huairou District, Beijing, China}
\email{guillaume.tahar@bimsa.cn}

\date{\today}
\keywords{
Residueless meromorphic differentials, Translation surfaces, Elliptic curves, Complex projective structures}

\begin{document}
\begin{abstract}
In projectivized strata of meromorphic $1$-forms on elliptic curves with only one zero, the locus of residueless differentials is a complex curve endowed with a canonical complex projective structure. Drawing on the multi-scale compactification of strata, we provide formulas to compute the genus of these curves and the degree of their natural forgetful map to $\mathcal{M}_{1,1}$. Additionally, we distinguish two non-hyperelliptic components in the residueless locus of the exceptional stratum $\mathcal{H}_{1}(12,-3,-3,-3,-3)$ and hence complete the classification of the connected components of these loci.
\end{abstract}
\maketitle
\setcounter{tocdepth}{1}
\tableofcontents

\section{Introduction}

For integers $b_1,\dots,b_p\geq 2$ and $a=b_1+\dots+b_p$, denote $\mu \coloneqq(a,-b_1,\dots,-b_p)$ the partition of 0. Recall that the stratum of meromorphic differentials $\RH_1(\mu)$ of genus one is defined to be the moduli space of meromorphic differentials on elliptic curves with orders of zeroes and poles prescribed by $\mu$. The stratum $\RH_1(\mu)$ has a $(p+1)$-dimensional complex orbifold structure given by the period coordinates.

The locus in $\RH_1(\mu)$ consisting of the differentials whose residue at each pole is equal to zero is denoted by $\RR_1(\mu)$. The stratum $\RR_1(\mu)$ may not be connected, and its connected components are classified in \cite{Lee}, except for the stratum $\RR_1(12,-3,-3,-3,-3)$. The codimension of $\RR_1(\mu)$ in $\RH_1(\mu)$ is equal to $p-1$, since the residues at the poles (up to the residue theorem) are part of the period coordinates system. Thus the projectivized residueless stratum $\PP\RR_1(\mu)$ is a one-dimensional complex orbifold. 

On the other hand, $\PP\RR_1(\mu)$ has a geometric smooth compactification $\PP\overline{\RR}_1(\mu)$ called the {\em multi-scale} compactification. Each connected component of $\PP\overline{\RR}_1(\mu)$ has a compact Riemann surface structure. Moreover, it has a complex projective structure given by the period coordinates. By investigating the complex projective structure, we obtain a cellular decomposition of $\PP\overline{\RR}_1(\mu)$. This allows us to classify the connected components of $\PP\overline{\RR}_1(\mu)$ and compute its Euler characteristic. In this paper, we complete the classification of the connected components in the case of $\RR_1(12,-3,-3,-3,-3)$. Also, we compute the Euler characteristic of $\PP\overline{\RR}_1(\mu)$ by enumerating the cells of the cellular decomposition determined by the complex projective structure. 

\subsection{Main results}

Recall that the residueless strata of genus one have {\em two} topological invariants: hyperellipticity (the existence of the hyperelliptic involution and the ramification profile, see Section~\ref{sub:CCRS}), and the rotation number (an integer divisor of $d\coloneqq \gcd(b_1,\dots,b_p)$). In \cite{Lee}, the first author proved that any residueless stratum of genus one, except for $\RR_1(12,-3,-3,-3,-3)$, has a unique non-hyperelliptic component with any given rotation number. We provide the classification of non-hyperelliptic components for this last exceptional stratum.

\begin{thm}\label{main1}
The residueless stratum $\RR_1(12,-3,-3,-3,-3)$ has six connected components. More precisely, it has:
\begin{itemize}
    \item {\em three} hyperelliptic components;
    \item {\em one} non-hyperelliptic component with rotation number $1$;
    \item {\em two} non-hyperelliptic components with rotation number $3$. These two components are isomorphic (they have the same canonical complex projective structure and just differ by the labeling of the poles).
\end{itemize}
\end{thm}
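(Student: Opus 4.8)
The plan is to organize the count around the two topological invariants listed above—hyperellipticity (together with its ramification profile) and the rotation number—establishing how many components each accounts for and that together they are complete. Since both invariants are locally constant on $\PP\RR_1(\mu)$, they are constant on each connected component, so it suffices to enumerate the realizable invariant values and to decide, for each, how many components carry it.

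First I would pin down the hyperelliptic components. A hyperelliptic differential carries an involution $\tau$ with $\tau^*\omega=-\omega$ whose quotient is $\PP^1$; on an elliptic curve such a $\tau$ is conjugate to $z\mapsto -z$ and, by Riemann--Hurwitz, has exactly four fixed points. The unique zero of order $12$ is preserved by $\tau$ and therefore sits at a fixed point, which is consistent because anti-invariance forces the order of $\omega$ at a fixed point to be even. The four poles have odd order $3$, so none can occupy a fixed point; hence $\tau$ permutes them in two swapped pairs. The partition of the four labeled poles into two unordered pairs is a deformation invariant, and there are exactly $\tfrac12\binom{4}{2}=3$ of them. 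I would then check that each partition is realized by a nonempty, connected hyperelliptic locus—parametrized by the configuration of the four branch points on $\PP^1$—giving precisely three hyperelliptic components.

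Next I would treat the non-hyperelliptic locus through the rotation number, which divides $d=\gcd(3,3,3,3)=3$ and so equals $1$ or $3$. Here I would use the cellular decomposition of $\PP\overline{\RR}_1(\mu)$ supplied by the canonical complex projective structure: its cells record the combinatorial types of the flat surfaces together with their codimension-one degenerations in the multi-scale boundary, and two differentials lie in one component exactly when their cells are joined by a path in the adjacency graph. Restricting to the non-hyperelliptic cells of rotation number $1$, I would verify that this graph is connected—matching the general pattern of \cite{Lee}—so rotation number $1$ contributes a single component.

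The heart of the theorem, and the step I expect to be the main obstacle, is rotation number $3$: I must show that its non-hyperelliptic cells split into \emph{two} connected families rather than one. This forces me to exploit what is special about $(12,-3,-3,-3,-3)$—namely that the rotation number attains its maximal value $d=3$, which endows the flat structure with extra $\Z/3$-cyclic data absent when $r<d$. The plan is to extract from this data a locally constant invariant that partitions the rotation-number-$3$ cells into exactly two classes, and to prove both that the invariant is preserved along every edge of the adjacency graph and that each class is internally connected. Finally, to show the two resulting components are isomorphic, I would let the symmetric group $S_4$ act by relabeling the four poles and prove that these two components form a single $S_4$-orbit of size two; this simultaneously exhibits the isomorphism of their complex projective structures and explains that they differ only by the labeling of the poles. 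Combining the three hyperelliptic components with the one rotation-number-$1$ and the two rotation-number-$3$ non-hyperelliptic components yields the asserted total of $3+1+2=6$.
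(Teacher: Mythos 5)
Your treatment of the hyperelliptic components and of rotation number $1$ is essentially sound (the paper simply quotes both facts from \cite{Lee}, so re-deriving them is extra but harmless work; note only that for the hyperelliptic part you must also check that each hyperelliptic locus has full dimension in the stratum, so that it is genuinely a connected component rather than a positive-codimension subvariety of a non-hyperelliptic one --- this is the content of Theorem~\ref{hyper} and Remark~\ref{hyperremark}). The genuine gap is exactly where you predicted the main obstacle: rotation number $3$. Your plan there restates the goal rather than proving it: you say you will ``extract a locally constant invariant that partitions the rotation-number-$3$ cells into exactly two classes'' and then prove invariance along edges and internal connectedness of each class, but you never produce the invariant, and the hint that it should come from ``extra $\Z/3$-cyclic data'' points in the wrong direction. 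In the paper the distinguishing datum is not any $\Z/3$-structure but the sign of a permutation: one first shows that every non-hyperelliptic rotation-$3$ component contains a boundary multi-scale differential of the form $X_\tau = X(4,\tau,(1,2,2,1),[(0,0)])$ with $\tau\in S_4$ (after discarding the prong-matchings and tuples ${\bf C}$ that land in hyperelliptic boundary), and the whole question becomes deciding exactly when $X_{\tau_1}$ and $X_{\tau_2}$ lie in the same component.

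That decision is made by an explicit, finite computation with the equatorial net, and nothing in your outline substitutes for it: starting from the cylinder-type cell whose cusp is $X_{Id}$, the paper lists all cells ($U,T,S,Q,R,P,N$ in Figure~\ref{fig_cells}) and all boundary arcs ($DA,\dots,DK$ in Figures~\ref{fig_arcs}--\ref{fig_arcs2}), records how the pole labels are permuted when one traverses each cycle of the incidence graph (Figure~\ref{fig_graph}), and observes that these cycle permutations generate exactly $A_4\subset S_4$. Both halves of what you need fall out of this single computation: connectedness of each class ($X_{\tau_1}$ and $X_{\tau_2}$ are joined whenever $\tau_1\tau_2^{-1}\in A_4$) and the existence of precisely two classes (odd relabelings are never realized by monodromy of the adjacency graph), so the components are the two cosets of $A_4$, interchanged by any transposition of pole labels --- whence isomorphic. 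Without this computation, or a concrete replacement for it, the argument cannot be completed: a priori there is no structural reason to expect two classes at all, since in every other genus-one residueless stratum \cite{Lee} proves uniqueness of the non-hyperelliptic component with given rotation number, so the invariant cannot be guessed from general principles; it has to be exhibited by actually computing the cell adjacencies.
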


Using the complex projective structure of $\PP\overline{\RR}_1(\mu)$, we compute its Euler characteristic in terms of the following combinatorial quantities: $a=\sum_{i} b_{i}$, $A=\sum_{i} (b_{i})^2$ and $B=\prod_{i} (b_i-1)$.

Orbifold points can only appear when $p \leq 2$. We first give formulas for the case $p=2$ (the case $p=1$ has already been described in \cite{Ta1}).

\begin{thm}\label{main2}
The (orbifold) Euler characteristic $\chi^{orb}\left(\PP\overline{\RR}_1(a,-b_{1},-b_{2})\right)$ of the multi-scale compactification $\PP\overline{\RR}_1(a,-b_{1},-b_{2})$ is given by
\begin{equation}
-\frac{(b_1-1)(b_2-1)}{12}(b_1 b_2+2b_{1}+2b_{2}+2)
 + \frac{1}{2}\left[
\displaystyle\sum_{\substack{1 \leq c_{1} \leq b_{1}-1 \\ 1 \leq c_{2} \leq b_{2}-1\phantom{-}}}  \gcd \left( c_{1}+c_{2}, a \right)
    +  \sum\limits_{c_{1}=1}^{b_{1}-1}  \gcd \left( c_{1}, b_{1} \right)
    + \sum\limits_{c_{2}=1}^{b_{2}-1}  \gcd \left( c_{2}, b_{2} \right)\right] +\varepsilon
\end{equation}
where the correction term $\varepsilon$ is:
\begin{itemize}
    \item $\frac{5}{2}$ if $b_{1},b_{2} \in 4\mathbb{N}^{\ast}$;
    \item $\frac{3}{2}$ if $b_{1} \in 4\mathbb{N}^{\ast}$ and $b_{2} \in 4\mathbb{N}+2$;
    \item $\frac{1}{2}$ otherwise.
\end{itemize}

$\PP\overline{\RR}_1(a,-b_{1},-b_{2})$ contains one orbifold point of order 3 in the connected component of rotation number $\gcd(b_1,b_2,\frac{b_1-b_2}{3})$ if $b_{1},b_{2} \in 3\mathbb{N}^{\ast}$ or $b_{1},b_{2} \in 3\mathbb{N}+2$ (and no orbifold point otherwise).
\end{thm}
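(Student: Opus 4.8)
My plan is to study the forgetful map $\pi\colon\PP\overline{\RR}_1(a,-b_1,-b_2)\to\overline{\mathcal{M}}_{1,1}$ recording only the underlying elliptic curve. The canonical complex projective structure on each component is the pullback under $\pi$ of the modular structure on $\overline{\mathcal{M}}_{1,1}=\overline{\mathbb{H}/\pslz}$, so $\pi$ is a finite branched cover of the modular orbifold, and the cellular decomposition cut out by the projective structure is the $\pi$-preimage of the standard decomposition of $\overline{\mathcal{M}}_{1,1}$, whose cells meet the two elliptic points $j=0$ (cone order $3$) and $j=1728$ (cone order $2$) together with the cusp $j=\infty$. Computing $\chi^{orb}$ then splits into two tasks: determining the degree $d$ of $\pi$, which controls the higher-dimensional cells and hence the generic part, and enumerating the fibers of $\pi$ over the three special points of $\overline{\mathcal{M}}_{1,1}$.

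First I would compute $d$. Fixing a generic curve $E$ and normalizing the zero to the origin, Abel's theorem forces the poles to satisfy $b_1P_1+b_2P_2=0$ in the group law, which cuts out a curve in $E\times E$; along it the differential with the prescribed divisor is determined up to scale, and requiring its single independent residue to vanish isolates finitely many points. I expect this enumeration to yield $d=(b_1-1)(b_2-1)(b_1b_2+2b_1+2b_2+2)$. Were $\PP\overline{\RR}_1(a,-b_1,-b_2)$ endowed with the orbifold structure making $\pi$ an honest orbifold covering, its Euler characteristic would be exactly $d\cdot\chi^{orb}(\mathcal{M}_{1,1})=-\tfrac{d}{12}$, which is the leading polynomial term of the formula; the remaining terms measure the difference between this covering structure and the genuine multi-scale structure, a difference supported over the cusp and the two elliptic points.

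Next I would analyze the special fibers. Over the cusp $j=\infty$ the preimages are the boundary points of the multi-scale compactification, classified by their level graphs and prong-matchings. The three families of degenerations---those separating the two poles, parametrized by pairs $(c_1,c_2)$ with $1\le c_i\le b_i-1$, and the two families isolating a single pole of order $b_i$, parametrized by $c_i$---produce the three $\gcd$-sums, each gcd recording the count (or orbifold weight) of boundary points of the given type, with the overall factor $\tfrac12$ reflecting their orbifold order. Over $j=0$ I would decide which residueless configurations on the equianharmonic curve are fixed by the order-$3$ automorphism; such a fixed configuration exists---and yields the single order-$3$ orbifold point, lying in the component of rotation number $\gcd(b_1,b_2,\tfrac{b_1-b_2}{3})$---exactly when $b_1,b_2\in3\mathbb{N}^\ast$ or $b_1,b_2\in3\mathbb{N}+2$. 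Finally, the order-$4$ automorphism at $j=1728$, acting on the configurations and on the prong-rotations of the nearby boundary points, creates the order-$2$ (and order-$4$) phenomena collected in $\varepsilon$, whose value jumps according to the residues of $b_1,b_2$ modulo $4$.

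The main obstacle will be the fiber analysis over the two elliptic points. One must track exactly how the extra automorphisms of the special elliptic curves permute the finite set of residueless configurations, and how this interacts with the hyperelliptic/non-hyperelliptic dichotomy and the rotation-number invariant of Theorem~\ref{main1}; this is where the delicate congruences modulo $3$ and $4$ originate and where a naive count must be corrected to pin down $\varepsilon$ and the precise location of the order-$3$ point. By contrast, the degree computation of step two, though laborious, is essentially routine once the residue condition is written in period coordinates.
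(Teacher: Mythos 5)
Your plan rests on two claims that are both false, and the paper's own results let you test them concretely. First, the canonical projective structure on $\PP\RR_1(\mu)$ is \emph{not} the pullback of the modular structure on $\mathcal{M}_{1,1}$: its charts are the projectivized periods $\left[\int_\alpha \omega , \int_\beta \omega\right]$ of the meromorphic form $\omega$ itself, not of the holomorphic form of the underlying curve. These structures are genuinely different --- the period developing map hits the equator $\mathbb{RP}^{1}$ (the equatorial net is a nonempty locus of honest interior points of the stratum), whereas any structure pulled back from $\mathbb{H}/\pslz$ develops into a half-plane. So the cellular decomposition is not $\pi^{-1}$ of a decomposition of $\overline{\mathcal{M}}_{1,1}$, and $\pi$ is not an orbifold covering away from $j=0,1728,\infty$. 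Second, your degree is wrong: Theorem~\ref{main3}, proved independently in Section~\ref{sec:Degree}, gives $\deg\pi=(a+1)\,p!\,B=2(b_1+b_2+1)(b_1-1)(b_2-1)$ for $p=2$, not $(b_1-1)(b_2-1)(b_1b_2+2b_1+2b_2+2)$; already for $b_1=b_2=2$ these are $10$ versus $14$. You have in effect reverse-engineered ``the degree'' from the desired leading term under the no-interior-ramification assumption, but a correct count on the Abel curve $\{b_1P_1+b_2P_2=0\}\subset E\times E$ returns the $(a+1)\,p!\,B$ of Theorem~\ref{main3} (up to the stacky factor from the elliptic involution), which would immediately contradict your framework.

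These two errors are really one: comparing the true degree with the true leading term shows that $\pi$ carries an enormous amount of ramification over \emph{generic} points of $\mathcal{M}_{1,1}$. Riemann--Hurwitz bounds the total correction supported on the three special fibers by $3\deg\pi$, which is of order $(b_1+b_2)\,b_1b_2$, while the defect between $\deg\pi\cdot\chi^{orb}(\mathcal{M}_{1,1})$ (in any normalization of $\chi^{orb}(\mathcal{M}_{1,1})$) and the true leading term $-\frac{(b_1-1)(b_2-1)}{12}(b_1b_2+2b_1+2b_2+2)$ grows like $\frac{b_1^{2}b_2^{2}}{12}$. Once $b_1,b_2$ are large this cannot be absorbed over $j=0,1728,\infty$, so the interior ramification is not a correction: it carries the main term, and computing it is essentially the whole problem. (A smaller misattribution: $\varepsilon$ does not come from the fiber over $j=1728$; for $p=2$ there are no orbifold points of order $2$ at all, yet $\varepsilon\neq 0$. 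In the paper $\varepsilon=\varepsilon_0-\varepsilon_1+\varepsilon_2$ arises from hyperelliptic identifications of prong-matchings at boundary points, i.e., over the cusp.) This is exactly why the paper never uses $\pi$ to prove Theorem~\ref{main2}: it builds the cell decomposition from the equatorial net, counts $0$-, $1$- and $2$-cells with the corrections of Sections~\ref{sec:Multiscale}--\ref{sec:Counting}, and locates orbifold points by classifying automorphisms of flat surfaces (Propositions~\ref{prop:symmetries} and~\ref{prop:ExcSymCel}). Of your plan, only the parts that survive are the identification of the gcd sums with the non-cusp boundary points over $j=\infty$ and the placement of the order-$3$ orbifold point over $j=0$.
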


When $p \geq 3$, the multi-scale compactification $\PP\overline{\RR}_1(\mu)$ is a (possibly disconnected) compact Riemann surface with no orbifold points. 

\begin{thm}\label{main2+}
When $p \geq 3$, the Euler characteristic of the stratum $\PP\RR_1(\mu)$ is given by  
\begin{equation}
   \chi\left(\PP\RR_1(\mu)\right)=  \frac{p!B}{48}(A-a^2-4a+20)
\end{equation}
where $a=\sum_{i} b_{i}$, $A=\sum_{i} (b_{i})^2$ and $B=\prod_{i} (b_i-1)$.\newline
Also, the Euler characteristic of the multi-scale compactification $\PP\overline{\RR}_1(\mu)$ is given by 

\begin{equation}
   \chi\left(\PP\overline{\RR}_1(\mu)\right)=  \chi \left( \PP\RR_1(\mu) \right)+\sum_{t=1}^p \sum_{\tau\in S_{p}}\sum_{C_i=1}^{b_i-1} \frac{1}{2t} \gcd \left( \sum_{i=1}^t C_{\tau(i)},\sum_{i=1}^t b_{\tau(i)} \right)+\varepsilon
\end{equation}
where the correction term $\varepsilon$ is:
\begin{itemize}
    \item $2$ if $p=3$ and $b_{1},b_{2},b_{3} \in 4\mathbb{N}^{\ast}$;
    \item $2$ if $p=3$ and $b_{1},b_{2},b_{3} \in 4\mathbb{N}+2$;
    \item $3$ if $p=3$, $b_{1} \in 4\mathbb{N}^{\ast}$, $b_{2} \in 4\mathbb{N}+2$ and $b_{3} \in 2\mathbb{N}^{\ast}$;
    \item $0$ otherwise.
\end{itemize}
\end{thm}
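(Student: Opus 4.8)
The plan is to compute both Euler characteristics through the forgetful map $\pi\colon \PP\RR_1(\mu)\to\mathcal{M}_{1,1}$ that remembers only the underlying elliptic curve, marked at the zero, and to exploit the fact that the complex projective structure carried by $\PP\RR_1(\mu)$ is, away from the boundary, the pullback under $\pi$ of the uniformizing projective structure of $\mathcal{M}_{1,1}=\slz\backslash\HB$. Granting this, $\pi$ is a local isomorphism of projective structures, hence an orbifold covering onto $\mathcal{M}_{1,1}$, and multiplicativity of the orbifold Euler characteristic under coverings gives $\chi^{\mathrm{orb}}(\PP\RR_1(\mu))=\deg(\pi)\cdot\chi^{\mathrm{orb}}(\mathcal{M}_{1,1})=-\tfrac{1}{12}\deg(\pi)$. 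Matching this against the asserted formula then amounts to proving $\deg(\pi)=\tfrac{p!\,B}{4}(a^2+4a-20-A)$; concretely, this is the realization of the cellular decomposition announced in the introduction as the $\pi$-preimage of the modular (Farey) tessellation of $\mathcal{M}_{1,1}$, so that the cell count reduces to the fiber count of $\pi$, extending the $p=1$ analysis of \cite{Ta1}.

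The degree of $\pi$ is the number, up to scale, of residueless differentials on a fixed generic elliptic curve with a single zero of order $a$ and poles of orders $b_1,\dots,b_p$. Fixing the curve and placing the zero at the origin, the admissible pole positions together with the residueless conditions are governed, in period coordinates, by the local rotational data at each pole, encoded by an integer $c_i\in\{1,\dots,b_i-1\}$ (whence the factor $B=\prod_i(b_i-1)$), while the $p!$ reflects the labelling of the poles. Summing the contributions of these configurations — a flat-geometric enumeration in which the quadratic dependence on the $b_i$ arises from the periods of the associated polygonal pieces — should yield the polynomial $a^2+4a-20-A$. This degree computation, together with the verification that $\pi$ is unramified over the generic locus so that no interior branch correction spoils multiplicativity, is the principal analytic input for the first formula.

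For the compactification I would use additivity of the Euler characteristic: since $\PP\overline{\RR}_1(\mu)$ is obtained from $\PP\RR_1(\mu)$ by adjoining finitely many boundary points, one has $\chi(\PP\overline{\RR}_1(\mu))=\chi(\PP\RR_1(\mu))+\#\partial$, where $\#\partial$ is the number of boundary points. It remains to enumerate the multi-scale boundary points. Each such point is a multi-level degeneration in which a nested chain of subsets of the poles is pushed to lower levels; the subset passed at depth $t$ is presented through an ordering $\tau\in S_p$ of the poles together with local invariants $C_i\in\{1,\dots,b_i-1\}$. The number of prong-matchings compatible with a given combinatorial type is the cyclic count $\gcd\!\big(\sum_{i=1}^{t}C_{\tau(i)},\sum_{i=1}^{t}b_{\tau(i)}\big)$, and the weight $\tfrac{1}{2t}$ removes the overcounting coming from the generic involution and from the cyclic symmetry of the level chain. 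Assembling these contributions produces the double-$\gcd$ sum in the statement.

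The two steps I expect to be genuinely delicate are, first, pinning down the ramification of $\pi$ over the two elliptic points $j=0$ and $j=1728$ of $\mathcal{M}_{1,1}$ — equivalently, identifying which configurations acquire extra automorphisms — since this is exactly where the value $-\tfrac{1}{12}\deg(\pi)$ must be reconciled with the actual orbifold structure and where the absence of orbifold points for $p\ge 3$ has to be checked; and, second, the exceptional correction term $\varepsilon$. The latter collects the contributions of the most symmetric multi-scale differentials — those occurring when all $b_i$ lie in $4\mathbb{N}^{\ast}$, or all in $4\mathbb{N}+2$, or in the mixed regime — where the hyperelliptic involution (cf. Section~\ref{sub:CCRS}) alters the prong-matching count or merges boundary points. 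I would treat these by a direct case analysis of the symmetric degenerations, paralleling the bookkeeping already carried out for Theorem~\ref{main2}.
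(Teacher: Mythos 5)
Your proposal takes a genuinely different route from the paper (which never uses covering-space multiplicativity: it builds the equatorial-net cellular decomposition, counts $0$-, $1$- and $2$-cells via the plumbing construction, and assembles $\chi=V-E+F$ with hyperelliptic correction terms), but the route fails at its first step, and the failure is not repairable. The premise that the canonical projective structure on $\PP\RR_1(\mu)$ is the pullback under $\pi$ of the uniformizing structure of $\mathcal{M}_{1,1}$ is false: the developing map of the stratum's structure is the period ratio $\left[\int_{\alpha}\omega:\int_{\beta}\omega\right]$ of the \emph{meromorphic} differential $\omega$, not of the holomorphic one. A clean way to see that these structures cannot agree is that the uniformizing structure develops into a single hemisphere of $\mathbb{CP}^{1}$ (a copy of the upper half-plane, disjoint from $\mathbb{RP}^{1}$), so its pullback would have an \emph{empty} equatorial net, whereas the equatorial net of $\PP\RR_1(\mu)$ is nonempty (by Proposition~\ref{plumbvertical} every vertical boundary point is adjacent to $2\lcm(Q_1,Q_2)\geq 2$ equatorial arcs; the entire decomposition of Section~\ref{sec:Cellular} lives on this net). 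Consequently $\pi$ is not a local isomorphism of projective structures and is not an orbifold covering: it has genuine branch points in the interior of the stratum, where distinct residueless configurations on nearby curves collide, so multiplicativity of $\chi^{orb}$ is simply unavailable. This is not a defect that a finer analysis over $j=0$ and $j=1728$ can cure: quantitatively, your route forces $\deg(\pi)=\tfrac{p!B}{4}(a^{2}+4a-20-A)$, which contradicts Theorem~\ref{main3} of this very paper, namely $\deg(\pi)=(a+1)\,p!B$. The two expressions agree only when $a^{2}-A=2\sum_{i<j}b_ib_j=24$ (this accident occurs for $\mu=(6,-2,-2,-2)$, which may make the heuristic look plausible, but already for $\mu=(9,-3,-3,-3)$ the stated formula gives $-70$ while $-\tfrac{1}{12}\deg(\pi)=-40$; the discrepancy is precisely the interior ramification your argument assumes away). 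The analogy with $p=1$ is misleading: there the components are modular curves, so $\pi$ happens to be an orbifold covering, but even then the stratum's projective structure is not the uniformizing one --- that is the whole point of \cite{Ta1} --- and for $p\geq 2$ the covering property itself is lost.

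There is also a secondary gap in your compactification step. Additivity $\chi(\PP\overline{\RR}_1(\mu))=\chi(\PP\RR_1(\mu))+\#\partial$ is correct, but then $\#\partial$ must include \emph{all} boundary points: the vertical two-level multi-scale differentials, which your $\gcd$-sum with weight $\tfrac{1}{2t}$ counts, and also the horizontal ones --- the cusps sitting inside the cells of cylinder type, of which there are $\tfrac{1}{2}p!B$. The second formula of Theorem~\ref{main2+} adds only the non-cusp count to the first formula; in the paper this is coherent because the first formula arises as $F-E$ in the cell count, which already absorbs the cusps (each cylinder-type cell is a punctured disk whose puncture is a horizontal boundary point). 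Taken literally, your two steps combined would therefore be off by $\tfrac{1}{2}p!B$ even if the first formula were granted. To prove the theorem one really does need the paper's mechanism: classify the $2$-cells by the topology of the core, use plumbing at each boundary $0$-cell to determine how many cells of each type and how many equatorial arcs are adjacent to it, and only then take the alternating sum, with the $p\leq 3$ hyperelliptic symmetries handled by explicit correction terms.
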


The complicated term of $\chi\left(\PP\overline{\RR}_1(\mu)\right)- \chi \left( \PP\RR_1(\mu) \right)$ in the formula counts the number of non-cusp points in the boundary $\partial \PP\overline{\RR}_1(\mu)$.

By forgetting the differential except for the location of the unique zero, we obtain a forgetful map $\PP\RR_1(\mu)\to \mathcal{M}_{1,1}$. This extends to the map $\PP\overline{\RR}_1(\mu)\to \overline{\mathcal{M}}_{1,1}$ between the compactifications. This map is a ramified covering of compact complex orbifolds of dimension one. We compute the degree of the forgetful map $\PP\RR_1(\mu)\to \mathcal{M}_{1,1}$. This counting can be refined by restriction to the connected components of given rotation number.

\begin{thm}\label{main3}
The degree of the forgetful map $\pi:\PP\RR_1(\mu)\to \mathcal{M}_{1,1}$ is equal to $(a+1) p!B.$

Fix some integer $r$ satisfying $r~\vert~d$. Let $\RC_r\subset \PP\overline{\RR}_1(\mu)$ be the union of all connected components with rotation number $r$. Then the degree of the forgetful map $\pi$ restricted to $\RC_r$ is equal to 
\begin{equation}
    \frac{1}{r^2} p![(a+1)B-(-1)^p]\prod_{q} \left(1-\frac{1}{q^2}\right)
\end{equation}
where $q$ runs over all prime divisors of $\frac{d}{r}$.
\end{thm}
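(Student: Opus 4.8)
The plan is to compute the degree as the number of points in a generic fibre of $\pi$, and then to stratify that fibre by rotation number. Fix a generic pointed elliptic curve $(E,O)\in\mathcal{M}_{1,1}$ and take $O$ to be the prescribed zero. A point of the fibre $\pi^{-1}(E,O)$ is a residueless differential $\omega$ on $E$, up to scale, whose divisor is exactly $aO-\sum_i b_iP_i$ for an (ordered) tuple of distinct poles $P_1,\dots,P_p$. Writing $\omega=f\,dz$ with $f$ an elliptic function and using the Weierstrass $\sigma$-function normal form $f(z)=C\,\sigma(z)^a\prod_i\sigma(z-P_i)^{-b_i}e^{\lambda z}$, the divisor condition forces, via Abel's theorem, the single relation $\sum_i b_iP_i=O$ in the group law of $E$, and it determines $\omega$ up to the scalar $C$. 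Hence the fibre is in bijection with the set of ordered tuples $(P_1,\dots,P_p)$ satisfying $\sum_i b_iP_i=O$ for which the resulting $\omega$ is residueless. A dimension count ($p$ parameters, one Abel relation, $p-1$ residue equations) confirms this set is finite, so $\deg\pi$ is its cardinality; the labelling of the poles contributes the factor $p!$, and it remains to count unordered configurations.

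For the total degree I would first treat the model case $p=1$, where $\sum_i b_iP_i=O$ says $P_1$ is an $a$-torsion point distinct from $O$, and residuelessness is automatic because a differential with a single pole has vanishing residue there. This yields $a^2-1=(a+1)(b_1-1)$ fibre points, matching $(a+1)p!B$. For $p\ge 2$ the residue conditions are genuine, and the cleanest route is to pass to the boundary: degenerate $(E,O)$ to the cusp of $\overline{\mathcal M}_{1,1}$, where the multi-scale compactification presents $\pi^{-1}$ as an explicit finite set of twisted/multi-scale differentials on the nodal cubic. On the normalisation $\mathbb{P}^1$ these become rational differentials with prescribed zero and pole orders together with node-matching data, which are enumerable combinatorially; the count organises as a global factor $a+1$ (the cone angle $2\pi(a+1)$ at the zero), a local factor $b_i-1$ at each pole, and the $p!$ from orderings, producing $(a+1)p!B$. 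I would cross-check this against the Euler-characteristic computations of Theorems~\ref{main2} and~\ref{main2+} via Riemann--Hurwitz.

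For the refinement I would set up the dictionary between the rotation number and the torsion type of the configuration $\{P_i\}$. The rotation number $r$ is a topological invariant of the flat structure, constant on connected components and dividing $d=\gcd(b_1,\dots,b_p)$; under the degeneration above it is read off from the cyclic (root-of-unity) gluing data at the node, and it manifests as a divisibility condition on the torsion data of $\sum_i b_iP_i=O$. The key quantity is then $N_s$, the per-ordering number of fibre configurations whose rotation number is divisible by $s$ (for $s\mid d$). Because the relevant torsion lives in the rank-two group $E[\,\cdot\,]$, each divisibility constraint by a prime $q$ cuts the count by $q^2$, and I expect to obtain
\[
N_s=\frac{1}{s^2}\bigl[(a+1)B-(-1)^p\bigr],
\]
the correction $-(-1)^p$ being the inclusion--exclusion (Euler-characteristic) term for the $p$ labelled, pairwise-distinct poles that must also avoid $O$. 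Writing $M_r$ for the per-ordering count with rotation number exactly $r$, the relation $N_s=\sum_{s\mid r\mid d}M_r$ together with the Jordan-totient identity $\sum_{t\mid m}t^{-2}\prod_{q\mid m/t}(1-q^{-2})=1$ inverts to
\[
M_r=\frac{1}{r^2}\bigl[(a+1)B-(-1)^p\bigr]\prod_{q\mid d/r}\Bigl(1-\tfrac{1}{q^2}\Bigr).
\]
Multiplying by the $p!$ labellings yields the stated degree of $\pi|_{\RC_r}$.

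The main obstacle is establishing the rotation-number/torsion dictionary rigorously --- that is, proving that ``rotation number divisible by $s$'' is exactly the $s$-divisibility condition producing the clean $1/s^2$ scaling --- and pinning down the correction term $-(-1)^p$. This term is where the open-stratum total $(a+1)p!B$ and the compactified restricted degrees part ways: the divisibility counts sum not to $(a+1)p!B$ but to $(a+1)p!B-(-1)^p p!$, the discrepancy reflecting degenerate configurations (such as a pole colliding with $O$) that live in the boundary $\boundary\PP\overline{\RR}_1(\mu)$ rather than over a generic point. Handling these boundary and orbifold contributions correctly, and confirming that the rotation-number stratification is preserved under the degeneration to the cusp, is the delicate heart of the argument; everything else is bookkeeping with the Weierstrass normal form and the Jordan-totient identity.
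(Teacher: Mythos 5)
Your proposal follows the same skeleton as the paper's proof (degenerate to the cusp of $\overline{\mathcal{M}}_{1,1}$, enumerate boundary multi-scale differentials, stratify by rotation number, invert with the Jordan totient), but two load-bearing steps are missing or incorrect.

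First, the total degree. The fibre of $\pi$ over the cusp is non-reduced: every two-level boundary point $X(t,\tau,{\bf C},[(u,v)])$ with $t>0$ lies over the banana curve and is a \emph{ramification point} of $\pi$, so the degree is the sum of local ramification indices, not the cardinality of the boundary fibre. The paper computes this index via Lemma~5.1 of \cite{CG} (from the local equations $x_iy_i=u^{a_i}$ at the two nodes) to be $(Q_1+Q_2)/\gcd(Q_1,Q_2)$, and only after weighting each of the $\gcd(Q_1,Q_2)$ prong-matching classes by this index does the vertical contribution collapse to $\sum_{t,\tau,{\bf C}}\frac1t(Q_1+Q_2)=a\,p!B$, to which the $p!B$ unramified horizontal points are added. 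Your enumeration, with the factor $a+1$ explained as the cone angle $2\pi(a+1)$ at the zero, never introduces these local degrees; an unweighted count of boundary points with node-matching data gives a different (smaller) number involving $\gcd$'s of the $Q_i$, so as written this step fails.

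Second, the refinement. Your central claim $N_s=\frac1{s^2}\bigl[(a+1)B-(-1)^p\bigr]$ is internally inconsistent: at $s=1$ it says the per-ordering total is $(a+1)B-(-1)^p$, contradicting the total degree $(a+1)B$ from the first part. The correct statement (the paper's lemma) is that the degree of $\pi$ restricted to $\RC_{\geq r}$ equals $\frac1{r^2}p!\bigl[(a+1)B-(-1)^p\bigr]+p!(-1)^p$, where the extra constant is produced by the horizontal (cylinder-cusp) boundary points, whose count scales like $1/r$ rather than $1/r^2$, combined with the inclusion--exclusion count of tuples ${\bf C}$ with $r\mid\sum_{i\leq t}C_{\tau(i)}$, namely $\frac1r\bigl[\prod_{i\leq t}(b_{\tau(i)}-1)-(-1)^t\bigr]+(-1)^t$. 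You notice that your counts do not sum to the total degree and attribute the discrepancy to ``degenerate configurations living in the boundary,'' but that cannot be the resolution: the loci $\RC_r$ partition $\PP\overline{\RR}_1(\mu)$, so the restricted degrees must sum exactly to the total degree. Carrying the constant through the inversion shows it cancels for every $r$ except $r=d$, where it survives as $p!(-1)^p$; concretely, in $\RR_1(4,-2,-2)$ the rotation-number-$2$ locus consists of the labelled configurations $(P,-P)$ with $\wp''(P)=0$ and has degree $4$ over $\mathcal{M}_{1,1}$, not the value $2$ that the clean formula (and your derivation) yields. So the dropped constant is not deferrable bookkeeping: it changes the answer in the extremal case. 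Relatedly, your rotation-number/torsion dictionary and the assertion that each prime divisibility cuts the count by $q^2$ are exactly what must be proved; the paper establishes them not through torsion points of $E$ but through Proposition~\ref{rot}, which reads the rotation number of a boundary point off the combinatorial data as $\gcd(d,Q_1,\sum_iC_i+v)$.
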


\subsection{Relations with other works}

\subsubsection{Modular curves}

When $p=1$, a stratum $\RH_{g}(\mu)$ coincides with its residueless locus $\RR_1(\mu)$. In \cite{Ta1} appears a complete description of strata $\RH_{1}(a,-a)$. The present work is a direct generalization of these results.
\par
Following Abel-Jacobi relations, a differential of $\RH_{1}(a,-a)$ is completely determined (up to a scaling) by the complex structure of the underlying elliptic curve and the choice of a torsion point of order $a$ (or any divisor of $a$). More precisely, the connected component of rotation number $1$ in $\PP\RH_1(a,-a)$ is parametrized by the modular curve $X_{1}(a)$. Connected components of residuless loci  $\RR_1(\mu)$ are complex curves that can be considered as generalized modular curves parametrizing some rigid configurations of points on elliptic curves.

\subsubsection{Generalized strata}\label{subsub:GS}

For a given stratum $\RH_{g}(\mu)$, a partition $R: \boldsymbol{p}=P_{1} \sqcup \dots \sqcup P_{k}$ of the set of (labeled) poles defines a locus called \textit{generalized stratum} $\RH_{g}^R(\mu)$ by requiring the sum of the residues of poles in each part $P_i$ to be zero. In the multi-scale compactification of strata, the boundary is made of products of generalized strata and some combinatorial data called prong-matchings (see \cite{BCGGM}). A description of generalized strata is therefore necessary to fully understand the irreducible components of the boundary divisor and the boundary complex of $\overline{\RH}_g(\mu)$. Since the multi-scale compactification of strata is a smooth compactification with normal crossings boundary, the understanding of the boundary complex should lead to a combinatorial description of the top-weight cohomology ring of each stratum.
\par
For example, the top weight cohomology of moduli spaces with normal crossings boundary can be studied by analyzing the boundary complex, which is done for $\mathcal{M}_g$ in \cite{CGP}, and for $\mathcal{A}_g$ in \cite{BBCMMW}. The residueless strata are special cases of the generalized strata, where the partition is the finest possible. The connected components of the residueless strata for any genus $g$ are classified in \cite{Lee}. In the present paper, we compute the topological invariants of one-dimensional residueless strata, using the description of the boundary. This may reflect a small part of this general project.\newline

\subsubsection{Lamé functions}

On an elliptic curve, a Lamé function of degree $m \geq 0$ is a solution of \textit{Lamé equation}:
$$
\frac{d^{2}W}{dz^{2}}-(m(m+1) \wp(z) + \lambda)W
.
$$
Here $\lambda$ is the so-called \textit{auxillary parameter} and satisfies an algebraic equation involving the degree $m$ and the invariants $g_{2},g_{3}$ of the elliptic curve.
\par
Lamé equation is an ubiquitous equation of mathematical physics. It first appeared by applying the separation of variables methods to the Laplace equation in an elliptic system of coordinates. It is also naturally a one-dimensional Schr\"{o}dinger equation with periodic potential. More recently, it appeared in inhomogeneous cosmological models (see the introduction of \cite{Ma} for background on the applications of Lamé equation in mathematical physics).
\par
In \cite{EGMP}, Lamé functions of degree $m$ are given a geometric interpretation as developing maps of translation surfaces of genus one with one zero of order $2m$ and $m$ residueless double poles. In other words, $\RR_1(2m,-2,\dots,-2)$ is also the moduli space of Lamé functions of degree $m$. The classification of connected components of these loci (for any $m \geq 1$) and their genera are computed in Theorem~1.1 of \cite{EGMP}. Our work generalizes these results to any partition $\mu$.\newline
In the description of $\PP\RR_1(\mu)$, the authors of \cite{EGMP} decompose these surfaces along curves called Lin-Wang curves. The latter have been introduced in the study of Mean Field Equations of the form $\Delta u +e^{u} = \rho \delta_{0}$ (see \cite{CCW}). Lin-Wang curves appear as a special case of our equatorial arcs introduced in Section~\ref{sec:Cellular}.

\begin{rmk}
Formulas of Theorems~\ref{main2} and $\ref{main3}$ may differ from those obtained in \cite{EGMP} because poles are labeled in the present paper.
\end{rmk}

\subsubsection{Isoresidual fibration}

The \textit{isoresidual fibration} is defined by the map sending a differential to the vector of its residues at the (labelled) poles. In the case of a stratum $\RH_{0}(\mu)$ of differentials on $\mathbb{CP}^{1}$, singular fibers belong to a complex hyperplane arrangement called the \textit{resonance arrangement}, see \cite{GT1} for details. The residueless locus of any stratum is expected to be the most singular isoresidual fiber. Since generalized strata (see Section~\ref{subsub:GS}) are loci in strata where some sums of residues vanish, generalized strata appear as preimages of intersections of resonance hyperplanes under the isoresidual fibration.
\par
Isoresidual fibers are complex manifolds (or orbifolds) of dimension $2g+n-1$ where $n$ is the number of zeroes in partition $\mu$. In \cite{GT1}, an extensive study of the case where $g=0$ and $n=1$ has been carried out. When $g=0$ and $n>1$, isoresidual fibers inherit a canonical \textit{translation structure} from the period coordinates. The case $n=2$ where generic isoresidual fibers are translation surfaces is described in \cite{CGPT}.
\par
A similar structure can be defined on the \textit{isoperiodic leaves} of any stratum of meromorphic $1$-forms (see \cite{CDF} for background on the isoperiodic foliation). A complete description of the translation structure of isoperiodic leaves of $\RH_1(1,1,-2)$ is given in \cite{FTZ}.

\subsection{Organization of the paper}

\begin{itemize}
    \item In Section~\ref{sec:TranslationSurfaces}, we provide background on the geometry of translation surfaces and their qualitative invariants (rotation number, hyperelliptic involutions, topology of the core).
    \item In Section~\ref{sec:Moduli}, we introduce the period atlas on strata of meromorphic differentials and the induced complex projective structure on the residueless locus.
    \item In Section~\ref{sec:Cellular}, we define the  equatorial net and describe the cellular decomposition of strata $\PP\RR_1(\mu)$. We deduce the classification of orbifold points appearing in these loci.
    \item In Section~\ref{sec:Multiscale}, we describe the multi-scale compactification of $\PP\RR_1(\mu)$ in terms of nodal curves.
    \item In Section~\ref{sec:Counting}, we count systematically cells of each dimension in the cellular decomposition of $\PP\RR_1(\mu)$ induced by the equatorial net. Deducing the Euler characteristic, we prove Theorems~\ref{main2} and~\ref{main2+}.
    \item In Section~\ref{sec:Degree}, we compute the degree of the forgetful map from $\pi:\PP\RR_1(\mu)\to \mathcal{M}_{1,1}$ and prove Theorem~\ref{main3}.
    \item In Section~\ref{sec:Except}, we draw on the combinatorics of equatorial nets to prove Theorem~\ref{main1}.\newline
\end{itemize}

\paragraph{\bf Acknowledgements.} The first author would like to thank his advisor, Samuel Grushevsky for introducing him to the theory of moduli spaces of differentials and many valuable discussions. Research by the second author is supported by the Beijing Natural Science Foundation IS23005 and the French National Research Agency under the project TIGerS (ANR-24-CE40-3604). The authors are grateful to the Department of Mathematics and Computer Science of the Weizmann Institute of Science for the hospitality in March 2022 when this project was initiated. The authors would also like to thank the anonymous reviewers for their valuable remarks.

\section{Geometry of translation surfaces}\label{sec:TranslationSurfaces}

\subsection{Translation structures}

For a pair $(X,\omega)$ where $X$ is a compact Riemann surface and $\omega$ is a meromorphic $1$-form, we denote by $X^{\ast}$ the complement of the set of zeroes and poles of $\omega$. In the punctured surface $X^{\ast}$, each local primitive of $\omega$ is locally injective.
\par
Injective primitives of $\omega$ form a family of complex-valued \textit{local coordinates} on $X^{\ast}$. On the overlap between their domains of definition, two such local coordinates differ by a constant. Transition maps are \textit{translations} of the complex plane $\mathbb{C}$. In other words, $\omega$ defines a \textit{translation structure} on $X^{\ast}$.
\par
Every translation-invariant geometric property of the complex plane is well-defined in a translation surface. In particular, any translation surface is endowed with a \textit{flat metric}. For close enough points $A,B$ in $X^{\ast}$, the distance according to this flat metric is $\vert \int\limits_{A}^{B} \omega \vert$. More generally, we define for every arc $\gamma$ of $X^{\ast}$ its \textit{period} $P_{\omega}(\gamma)=\int\limits_{\gamma} \omega$. Period $P_{\omega}(\gamma)$ only depends on the homotopy class of $\gamma$ (with fixed endpoints).
\par
In this paper, we will refer to $(X,\omega)$ as a \textit{translation surface}.

\subsection{Local model of singularities}

At a zero of order $k$, a meromorphic $1$-form is locally biholomorphic to $z^{k}dz$. Therefore, a neighborhood of the zero is a cyclic ramified cover of order $k+1$ of the punctured disk. In terms of translation structures, a zero of order $k$ is thus a \textit{conical singularity} of angle $2(k+1)\pi$.

Now we consider poles of order $k \geq 2$ with zero residue (indeed, the residue at simple poles is always nonzero). We start with the case of a double pole. Up to a change of variable $h=z^{-1}$, a neighborhood of a double pole with zero residue is biholomorphic to a neighborhood of the point at infinity in the compactified standard flat plane $(\mathbb{C},dz)$.

For the general case of residueless poles of order $k \geq 3$, a neighborhood of the pole is a cyclic ramified cover of order $k-1$ of a neighborhood of a residueless double pole. In terms of translation structures, it amounts to the geometry at infinity of an infinite flat cone of angle $2(k-1)\pi$.

\begin{rmk}
In this paper, we are mostly interested in the residueless locus of strata of meromorphic $1$-forms. Poles with nonzero residues only appear in irreducible components (of genus zero) of nodal curves forming the boundary in the multi-scale compactification of strata. These latter multi-scale differentials play a purely combinatorial role and we do not need to give a precise geometric interpretation of their poles.
\end{rmk}

\subsection{Volume}\label{sub:volume}

Consider a residueless meromorphic differential $\omega$ on an elliptic curve such that $\omega$ has only one zero. Then the period $P_{\omega}(\gamma)$ of a closed loop $\gamma$ avoiding every singularity of $\omega$ only depends on the class of $\gamma$ in the absolute homology group $H^{1}(\mathbb{T}^{2})$ of an unpunctured torus.
\par
For a symplectic basis $\alpha,\beta$ of $H^{1}(\mathbb{T}^{2})$, $\mathfrak{Im}(\overline{P_{\omega}(\alpha)}P_{\omega}(\beta))$ is called the \textit{volume} of $\omega$ and denoted by $Vol_{\omega}$. It can be checked directly that it does not depend on the choice of a specific symplectic basis. The volume of a differential $\omega$ can be
positive, negative or zero (see \cite{CF} for a discussion of volume in any genus).

\subsection{Rotation number of translation structures}\label{sub:rotation}

Since directions are preserved by translations, the usual winding number can be generalized to translation surfaces.

\begin{defn}
Let $\gamma$ be a smooth oriented loop in a translation surface punctured at the singularities. Assuming that $\gamma$ is parametrized by arc-length, $\frac{\gamma'(t)}{|\gamma'(t)|}$ is a continuous map from $\mathbb{S}^{1}$ to $\mathbb{S}^{1}$. As such, it has a well-defined \textit{topological index} $Ind_{\gamma}$ which depends only on the homotopy class of $\gamma$.
\end{defn}

The topological index of a positively oriented loop around a singularity of order $a$ is clearly $a+1$.\newline

We can then construct a global flat invariant for any translation surface of genus one.

\begin{defn}
We consider a translation surface $(X,\omega)$ where $X$ is an elliptic curve and $\omega$ is a meromorphic $1$-form on $E$. Let $c$ be the greatest common divisor of the orders of the singularities of $\omega$ and $\alpha,\beta$ a pair of closed simple loops of $X^{\ast}$ forming a symplectic basis of the homology of the unpunctured torus.
\par
The \textit{rotation number} of $(X,\omega)$ is defined as $\gcd(Ind_{\alpha},Ind_{\beta},c)$. In particular, it does not depend on the choice of a specific symplectic basis $(\alpha,\beta)$.
\end{defn}

\subsection{Hyperelliptic involutions}

Given a translation surface $(X,\omega)$, a \textit{hyperelliptic involution} $\sigma$ is a biholomorphic involutive automorphism of $X$ such that $X/\sigma \cong \mathbb{CP}^{1}$ and $\sigma^{\ast}\omega=-\omega$.
\par
Any hyperelliptic involution defines, in particular, an \textit{induced permutation} acting on the finite set of (labeled) singularities of $\omega$. An induced permutation is characterized by its fixed points and its pairs of conjugated elements.
\par
A translation surface $(X,\omega)$ is \textit{hyperelliptic} if it admits at least one hyperelliptic involution. A connected component $\RC$ of a stratum $\RR_1(\mu)$ is \textit{hyperelliptic} if every translation surface in $\RC$ is hyperelliptic. 

\subsection{Saddle connections and closed geodesics}

In a translation surface, the differential defines a period for every homotopy class of topological arcs in the punctured surface. However, the flat metric defines geodesic representatives in these classes and they are very useful to describe the geometry of the surface.

\begin{defn} A \textit{saddle connection} is a geodesic segment joining two conical singularities of the translation surface such that all interior points are regular points. A saddle connection is \textit{closed} if its two end points coincide.
\end{defn}

The period of any saddle connection depends only on its relative
homology class in the first relative homology group $H_{1}(X \setminus \lbrace{ poles \rbrace}~\vert~\lbrace{ zeroes \rbrace},\mathbb{Z})$.\newline

The curve-shortening flow applies in translation surfaces as well as on the flat plane. It can be used to prove that every homotopy class of simple loops on a translation surface punctured at the singularities has a geodesic representative. This representative is either a regular closed geodesic or a broken geodesic formed by saddle connections.\newline

A small enough neighborhood of a closed geodesic passing only through regular points is also foliated by closed geodesics with same length and direction (since they are homotopic, they have the same period in the translation structure). Therefore, closed geodesics belong to \textit{cylinders} bounded by saddle connections (the foliation can be extended until it reaches the first singularity).

\subsection{Core}

In a translation structure defined by a meromorphic differential, the \textit{core} is the convex hull of the conical singularities for the flat metric induced by the translation structure. As such, it contains every saddle connection.

\begin{defn}
A subset $M$ of a translation surface is \textit{convex} if every geodesic segment between two points of $M$ in included in $M$. The core $C(X)$ of a translation surface $(X,\phi)$ is the convex hull of its conical singularities (the zeroes of $\phi$). $IC(X)$ is the interior of $C(X)$ in $X$ and $\partial C(X)= C(X) \setminus IC(X)$ is its boundary. The core is said to be degenerate if $IC(X)= \emptyset$. In this case, $C(X)$ is just an embedded graph.
\end{defn}

The general results about the core of a translation surface are summarized into the following statements, see Section~4 in \cite{Ta} for more details.

\begin{prop}
For any translation structure induced by a meromorphic $1$-form on a surface $X$, the surface punctured at the poles retracts to its core. Each connected component of the complement of $C(X)$ is a topological disk containing a unique pole. Conversely, every pole belongs to such a component called \textit{polar domain}. These domains are bounded by finitely many saddle connections. The union of these saddle connections is $\partial C(X)$.
\end{prop}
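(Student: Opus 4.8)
The plan is to analyze the punctured surface end-by-end, using the local models of the singularities established above, and then to read off the structure of $C(X)$ from the convexity that defines it. The first and central step is a \emph{separation lemma}: there is a neighborhood of each pole that is disjoint from $C(X)$. Recall the local models: a residueless double pole is a neighborhood of infinity in the flat plane $(\mathbb{C},dz)$, a residueless pole of order $k\geq 3$ is the geometry at infinity of an infinite flat cone of angle $2(k-1)\pi$, and a pole carrying a nonzero residue has a half-infinite cylinder direction. In each of these infinite-area ends a length-minimizing geodesic segment whose endpoints lie at bounded flat distance from the conical singularities cannot penetrate arbitrarily far toward the pole, since it would have to turn around to return. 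Consequently every saddle connection, and hence the convex hull $C(X)$ of the finitely many zeroes, stays in the complement of a fixed neighborhood of the poles. In particular $C(X)$ is compact and each pole lies in $X\setminus C(X)$.

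With the separation lemma in hand, I would obtain the retraction from the radial structure of the ends. Each polar end is foliated by geodesic rays escaping to the pole (or, in the cylinder case, by its core geodesics); pushing points inward along this foliation until they reach $\partial C(X)$ gives a deformation retraction of the end onto its boundary. Gluing these radial retractions to the identity on $C(X)$ produces a deformation retraction of $X\setminus\{\text{poles}\}$ onto $C(X)$. The same foliation identifies the complementary components: by the separation lemma every connected component of $X\setminus C(X)$ is an end of the punctured surface, the foliation shows each component contains exactly one pole, and filling that pole back in compactifies the end of a cone or cylinder to a single point, so the component is a topological disk. This yields the polar domains and the statement that every pole lies in exactly one of them.

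It remains to describe $\partial C(X)$. Because $C(X)$ is a convex hull for the flat metric, its boundary is locally geodesic away from conical singularities: a boundary point that is not locally geodesic would admit a flat shortcut between two boundary points, contradicting convexity, and the extreme points of a convex hull lie among its generating points, i.e.\ the zeroes. Hence $\partial C(X)$ is a union of geodesic segments joining conical singularities whose interior points are regular, that is, saddle connections. Finiteness follows from the discreteness of the set of holonomy vectors of saddle connections together with the compactness of $C(X)$: only finitely many saddle connections have length at most $\operatorname{diam} C(X)$, so only finitely many bound each polar domain, and their union is exactly $\partial C(X)$.

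The main obstacle is the separation lemma, and within it the careful local analysis at higher-order poles carrying a nonzero residue, where the end is neither a pure cone nor a pure cylinder. There one must still verify that length-minimizing geodesics with endpoints near the zeroes cannot escape deep into the end, so that $C(X)$ is genuinely compact and its boundary is piecewise geodesic terminating at conical points; the pure cone and pure cylinder cases are comparatively transparent, but the mixed case requires a uniform estimate controlling how far a geodesic can travel into the end before it is forced to turn.
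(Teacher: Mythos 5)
You should first know that the paper itself contains no proof of this proposition: it is stated as a summary of known results, with the reader referred to Section~4 of \cite{Ta}. So your attempt can only be compared with that source's strategy, whose overall architecture (separate the poles from the core via the local models, retract the ends, show the boundary is made of saddle connections, get finiteness from discreteness) your outline does follow.

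The genuine gap is the step ``by the separation lemma every connected component of $X\setminus C(X)$ is an end of the punctured surface, the foliation shows each component contains exactly one pole.'' The separation lemma only gives the inclusion of the poles into $X\setminus C(X)$; it says nothing about an arbitrary component of the complement. It does not exclude (i) a component with compact closure containing no pole at all, (ii) a single component containing two or more poles, or (iii) a component containing one pole but having positive genus or several boundary curves, and your radial foliation is defined only on the model neighborhoods of the poles, not on whole complement components, so it cannot ``identify'' them. Ruling out (i)--(iii) is precisely the substantive content of the proposition (it is what makes the complement components \emph{polar domains}, i.e.\ disks in bijection with the poles), and it needs a real argument: for instance, convexity of $C(X)$ forces every complement-side angle at a corner of $\partial C(X)$ to be at least $\pi$ (otherwise a short geodesic chord between two boundary points would cross the complement), after which one combines Gauss--Bonnet for flat surfaces with such boundary, minimality of the hull (to exclude smooth closed geodesic boundary components), and a crossing-geodesic argument (a geodesic joining two boundary points of a non-disk component through its interior would violate convexity). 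Nothing playing this role appears in your proposal.

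There is also a secondary flaw in the separation step itself: you conclude that $C(X)$ avoids a neighborhood of each pole because ``every saddle connection'' does. But the convex hull is not the union of saddle connections; it is the closure of the set of zeroes under taking \emph{all} geodesic segments between points already in the hull, iterated. A depth bound proved only for length-minimizing geodesics with endpoints near the zeroes does not a priori survive this iteration, and the paper's definition of convexity involves all geodesic segments, not only minimizers. The clean statement, which your local models do yield, is that the complement of a suitably deep neighborhood of each pole is itself convex (in the cone model the distance to the apex is convex along any unrolled geodesic, and in the cylinder model the height is affine along any lift), so the hull is trapped in one step. Likewise, ``the extreme points of a convex hull lie among its generating points'' is a Euclidean fact whose flat-surface analogue is exactly what must be proved here; it follows from \emph{minimality} of the hull (cut off a boundary cap near a regular non-geodesic boundary point and check the result is still convex and still contains the zeroes), not from convexity alone as your ``shortcut'' phrasing suggests.
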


Let $\alpha_{1},\dots,\alpha_{p}$ be the number of boundary saddle connections of the polar domain of a meromorphic $1$-form in $\mathcal{H}_{1}(a,-b_{1},\dots,-b_{p})$. We denote by $T$ the number of triangles in any geodesic triangulation of the interior of the core and by $|A|$ the maximal number of non-crossing saddle connections that can be drawn. The following lemma follows from the Euler characteristic formula on the torus.

\begin{lem}\label{lem:coreformula}
For any translation surface of $\mathcal{H}_{1}(a,-b_{1},\dots,-b_{p})$, the following formulas hold:
\begin{itemize}
\item $|A|=p+1+T$ (Euler characteristic);
\item $2|A|=3T+\sum \alpha_{j}$ (counting of edges);
\item $2p+2=T+\sum \alpha_{j}$.
\end{itemize}
\end{lem}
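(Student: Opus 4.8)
The plan is to treat the single zero, a maximal system of non-crossing saddle connections, the triangles of the core, and the polar domains as the cells of a CW decomposition of the closed torus $X$, and then to extract all three identities from one Euler characteristic computation together with an edge-counting argument.

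First I would fix a maximal collection of non-crossing saddle connections realizing $|A|$; by maximality these triangulate the interior of the core into the $T$ triangles, and the boundary $\partial C(X)$ is covered by some of them. Using the Proposition recalled above, the complement $X \setminus C(X)$ consists of exactly $p$ polar domains, each a topological disk containing a single pole in its interior and bounded by $\alpha_j$ boundary saddle connections. This yields a CW structure on $X$ whose unique $0$-cell is the single zero of order $a$ (the poles lie in the interior of $2$-cells and are not vertices), whose $1$-cells are the $|A|$ saddle connections, and whose $2$-cells are the $T$ triangles together with the $p$ polar domains.

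Next I would apply $\chi(X) = 0$. With $V = 1$, $E = |A|$ and $F = T + p$, the relation $V - E + F = 0$ gives $|A| = p + 1 + T$, the first formula. For the second, I count edge-face incidences: since $X$ is a closed surface, each saddle connection appears on the boundary of the $2$-cells exactly twice, so summing the number of boundary edges over all faces gives $2|A| = 3T + \sum_j \alpha_j$, where each triangle contributes $3$ and the polar domain of pole $j$ contributes $\alpha_j$. The third formula is then purely algebraic: substituting $2|A| = 2(p + 1 + T)$ into $2|A| = 3T + \sum_j \alpha_j$ yields $2p + 2 = T + \sum_j \alpha_j$.

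The only genuine subtlety, and the step I would treat with the most care, is verifying that this really is a CW decomposition of the closed surface: that the polar domains are honest disks meeting the core exactly along their boundary saddle connections, that no saddle connection is left incident to a single face only, and that treating each pole as an interior point of a $2$-cell rather than as a vertex is legitimate. All of this is supplied by the structural Proposition on the core (the surface punctured at the poles retracts onto $C(X)$, and the components of its complement are disks bounded by saddle connections), so once that input is invoked the three formulas follow mechanically.
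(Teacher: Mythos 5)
Your proof is correct and is precisely the argument the paper has in mind: the paper gives no written proof beyond the remark that the lemma ``follows from the Euler characteristic formula on the torus,'' and your CW decomposition (one vertex at the unique zero, the $|A|$ saddle connections as edges, the $T$ triangles and $p$ polar domains as faces), combined with $\chi(\mathbb{T}^2)=0$ and the edge--face incidence count, is exactly that computation. The structural inputs you flag as the only subtle points (polar domains are disks bounded by saddle connections, maximality gives a triangulation of the core interior) are indeed supplied by the proposition on the core quoted just before the lemma, so the argument is complete.
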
 

The condition that a translation structure is induced by a residueless differential implies strong limitations on the shape of the core. Indeed, a polar domain whose pole is residueless cannot be bounded by only one saddle connection because this saddle connection would be closed and would have a period equal to zero. Consequently, we have $\alpha_{1},\dots,\alpha_{p} \geq 2$. Lemma~\ref{lem:coreformula} then implies that $T \leq 2$.

\begin{cor}\label{cor:core}
For any translation surface of $\mathcal{R}_{1}(a,-b_{1},\dots,-b_{p})$, the interior of the core can be:
\begin{itemize}
\item Empty (if $T=0$);
\item A triangle (if $T=1$);
\item A quadrilateral, a pair of disjoint triangles or a flat cylinder (if $T=2$).
\end{itemize}
\end{cor}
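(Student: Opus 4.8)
\emph{Proof plan.} The plan is to run a case analysis on the integer $T$, which by the discussion preceding the statement is restricted to $T\in\{0,1,2\}$: since the residueless condition forces $\alpha_j\ge 2$ for every polar domain, the identity $2p+2=T+\sum_j\alpha_j$ of Lemma~\ref{lem:coreformula} gives $\sum_j\alpha_j\ge 2p$ and hence $T\le 2$. The three bullet points correspond exactly to these three values of $T$, and in each case I must identify the flat surface obtained by assembling the $T$ geodesic triangles of a triangulation of $IC(X)$, all of whose vertices sit at the unique zero.

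The cases $T=0$ and $T=1$ are immediate. If $T=0$, a geodesic triangulation of the interior has no $2$-cells, so $IC(X)=\emptyset$ and the core is degenerate. If $T=1$, the interior is a single geodesic triangle, as claimed.

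For $T=2$ I would organize the argument around the number of saddle connections shared by the two triangles. Here Lemma~\ref{lem:coreformula} gives $|A|=p+3$ and $\sum_j\alpha_j=2p$, so every polar domain is bounded by exactly two saddle connections; the two triangles can then share zero, one, or two edges. Sharing no edge yields a pair of disjoint triangles; sharing a single edge yields a topological disk bounded by four saddle connections, i.e. a quadrilateral. Sharing all three edges is excluded, since it would close the two triangles into a boundaryless surface, leaving no room for the $p\ge 1$ polar domains. This leaves the case of two shared edges, which is the crux.

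The step I expect to be the main obstacle is this last configuration, where two triangles are glued along two of their edges. Combinatorially it allows two outcomes: an orientation-preserving cyclic gluing producing a flat cylinder, whose two leftover edges are distinct closed boundary saddle connections; and a folded (``adjacent'') gluing producing a disk bounded by two saddle connections --- a bigon. Orientability of $(X,\omega)$ immediately discards the orientation-reversing (M\"obius) alternative, but to exclude the bigon I would use an Euler-characteristic bookkeeping. Because the surface punctured at its poles retracts onto the core, one has $\chi\big(C(X)\big)=-p$; writing $C(X)=IC(X)\sqcup\partial C(X)$ and using additivity of the compactly supported Euler characteristic gives $\chi_c\big(IC(X)\big)=\chi\big(C(X)\big)-\chi\big(\partial C(X)\big)=0$, which is incompatible with an open disk (for which $\chi_c=1$) and forces the open cylinder (for which $\chi_c=0$). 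The delicate point throughout is that all triangle-corners are identified at the single zero, so $C(X)$ is not an honest manifold-with-boundary; keeping the bookkeeping of vertices, edges and boundary components correct under this identification is where the genuine care lies, whereas the cases of zero or one shared edge, and those with $T\le 1$, require none of it.
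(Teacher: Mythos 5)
Your proposal is correct and takes essentially the same route as the paper: the corollary is obtained from Lemma~\ref{lem:coreformula} plus the observation that the residueless condition forces $\alpha_{j}\ge 2$, hence $T\le 2$, followed by an enumeration of the ways at most two triangles can be assembled. The paper states this enumeration without further proof, and your $T=2$ case analysis by shared-edge count — including the exclusion of the bigon, where the bookkeeping indeed gives $\chi(\partial C(X)) = 1-(|A|-2) = -p = \chi(C(X))$ and hence $\chi_c(IC(X)) = 0 \ne 1$ — correctly fills in the details the paper treats as immediate.
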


\subsection{Translation surfaces with nontrivial automorphisms}

Preserving the qualitative features of the core leads to strong constraints on symmetries of residueless differentials.

\begin{prop}\label{prop:symmetries}
If a translation surface $(X,\omega)$ of $\mathcal{R}_{1}(a,-b_{1},\dots,-b_{p})$ has a nontrivial automorphism $\sigma$, then one of the following statements holds:
\begin{enumerate}
    \item $\sigma^{\ast} \omega = - \omega$ and $\sigma^{2}=Id$ (hyperelliptic automorphism);
    \item $\sigma^{3}=Id$ and $p \in \lbrace{ 1,2 \rbrace}$;
    \item $\sigma^{4}=Id$ and $p=1$.
\end{enumerate}
\end{prop}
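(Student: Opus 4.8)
The plan is to exploit that $\omega$ has a single zero, so any automorphism $\sigma$ of $(X,\omega)$ must fix this zero $z_0$. First I would place $z_0$ at the origin of the elliptic curve $X=\mathbb{C}/\Lambda$; then $\sigma$, being a biholomorphism fixing the origin, is multiplication by a root of unity $\zeta$, and the classical classification of automorphisms of marked elliptic curves restricts its order $n$ to $\{2,3,4,6\}$ (with $n=4$ only for the square lattice and $n\in\{3,6\}$ only for the hexagonal one). Writing $\omega=f(z)\,dz$ with $f(z)=cz^{a}+\cdots$ near $z_0$ and using $\sigma(z)=\zeta z$ gives $\sigma^{\ast}\omega=\zeta^{a+1}\omega$, so the scaling factor is $\lambda=\zeta^{a+1}$, a root of unity whose argument is exactly the angle $\tfrac{2\pi(a+1)}{n}$ by which $\sigma$ rotates the flat structure about $z_0$.

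Next I would treat the order‑two case, which must yield case (1). Here $\sigma=-\mathrm{id}$ acts as $-\mathrm{id}$ on $H_1(X,\mathbb{Z})$, so for every absolute cycle $\gamma$ one has $\lambda\,P_{\omega}(\gamma)=P_{\omega}(\sigma_{\ast}\gamma)=P_{\omega}(-\gamma)=-P_{\omega}(\gamma)$. Since the absolute periods of $\omega$ do not all vanish (otherwise $\omega$ would be exact, a degenerate situation I would exclude separately using the quotient to $\mathbb{CP}^1$ and a residue count), this forces $\lambda=-1$, i.e. $\sigma^{\ast}\omega=-\omega$ with $X/\sigma\cong\mathbb{CP}^1$. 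Equivalently, the local computation $\lambda=(-1)^{a+1}$ together with the fact that a residueless $\omega$ invariant under $-\mathrm{id}$ would have to descend to a residueless differential on $\mathbb{CP}^1$ with a single zero and poles forced onto all four branch points — an over‑determined system with no nonzero solution — rules out $\lambda=+1$ and gives $\sigma^{\ast}\omega=-\omega$.

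For $n\in\{3,4,6\}$ the decisive input is that $\sigma$ fixes \emph{every} singularity: the zero is unique and automatically fixed, and as an automorphism of the labeled surface $\sigma$ must send each pole to itself. Hence each of the $p$ poles sits at a fixed point of $z\mapsto\zeta z$ distinct from the origin, i.e. at a nontrivial point of $\ker(\zeta-1\colon X\to X)$. Counting $\#\ker(\zeta-1)=|\zeta-1|^2$ and removing the origin gives exactly $2$, $1$, and $0$ admissible positions for $n=3,4,6$ respectively. Therefore $p\le 2$ when $\sigma^{3}=\mathrm{Id}$ and $p\le 1$ (so $p=1$) when $\sigma^{4}=\mathrm{Id}$, which are cases (2) and (3); and for $n=6$ one would need $p\le 0$, which is impossible since $p\ge 1$, so order $6$ is excluded.

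The step I expect to be the main obstacle is justifying that $\sigma$ really fixes each pole rather than permuting them nontrivially, since an order‑$n$ rotation cyclically permuting a free orbit of $p$ poles would evade the fixed‑point count; this is exactly where the geometry of the core enters. Being a finite‑order isometry fixing $z_0$, $\sigma$ is a rotation about $z_0$ preserving the core and its interior, which by Corollary~\ref{cor:core} is empty, a triangle, a quadrilateral, a pair of triangles, or a cylinder. Because a nontrivial rotation of the link of $z_0$ acts freely on the angular sectors cut out by the boundary saddle connections, $n$ must divide the number of such sectors, and Lemma~\ref{lem:coreformula} converts this divisibility into constraints relating $n$, the shape of the core, and the $\alpha_j$. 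Reconciling this sector bookkeeping with the permutation action on the polar domains — using crucially that the single‑zero hypothesis identifies all vertices of the core to $z_0$ — is what forces every polar domain to be $\sigma$‑invariant in the admissible configurations, thereby licensing the fixed‑point count above and closing cases (2), (3) and the exclusion of order $6$.
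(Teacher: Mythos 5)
Your first three paragraphs give a correct proof, and by a genuinely different route than the paper's. Both arguments start from the same input -- $\sigma$ fixes each pole because the poles of a point of $\RR_1(\mu)$ are labeled -- but diverge immediately afterwards. The paper stays entirely in flat geometry: $\sigma$ preserves each polar domain and must act nontrivially on its boundary saddle connections; Lemma~\ref{lem:coreformula} and Corollary~\ref{cor:core} show each polar domain has $2$, $3$ or $4$ sides, and the identity $2p+2=T+\sum_j\alpha_j$ then leaves only a $2$-sided domain (forcing $\sigma^{2}=\mathrm{Id}$ and $\sigma^{\ast}\omega=-\omega$, the two boundary saddle connections being homologous), or $(T,p)=(1,1)$ with a $3$-sided domain, $(T,p)=(0,2)$ with two $3$-sided domains, or $(T,p)=(0,1)$ with a $4$-sided domain. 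You instead uniformize: $\sigma$ is $z\mapsto\zeta z$ with $\zeta$ of order $2,3,4$ or $6$, and since the poles must sit at fixed points of $\sigma$, the count of nonzero fixed points ($2$, $1$, $0$ for $n=3,4,6$) gives cases (2), (3) and the exclusion of order $6$, while the action of $-\mathrm{id}$ on $H_1(X,\mathbb{Z})$ gives case (1). This is shorter and locates the poles at torsion points for free; what the paper's proof buys instead is the flat description of the symmetric configurations (which cell types can carry them), which is exactly what Proposition~\ref{prop:ExcSymCel} reuses, so with your proof that analysis would have to be redone later. Your period argument is complete once some absolute period is known to be nonzero, and that is immediate: a saddle connection is a closed loop of positive length whose period, by residuelessness, is an absolute period (your exactness exclusion also works: $\omega=dg$ would make $g-g(z_0)$ have a zero of order $a+1$ but poles of total order $a-p<a+1$).

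The genuine flaw is your final paragraph. That $\sigma$ fixes each pole is not a geometric fact awaiting a core-combinatorial proof: it is definitional, because the poles are labeled, and this is precisely what the paper invokes and what you yourself invoke in your third paragraph. Moreover, the program you sketch there cannot be completed, because without the labels the statement is false. Take $\rho=e^{2\pi i/3}$, $X=\C/(\Z+\rho\Z)$, let $e_1,e_2,e_3$ be the three nonzero $2$-torsion points, and let $\omega$ be the meromorphic $1$-form with divisor $6[0]-2[e_1]-2[e_2]-2[e_3]$; it exists because this divisor has degree $0$ and sums to $0$ in the group law. The elliptic involution $\iota(z)=-z$ fixes $0$ and each $e_i$ and preserves this divisor, so $\iota^{\ast}\omega=\mu\omega$; the leading term at $0$ gives $\mu=(-1)^{6+1}=-1$, whence $\mathrm{Res}_{e_i}(\omega)=\mathrm{Res}_{e_i}(\iota^{\ast}\omega)=-\mathrm{Res}_{e_i}(\omega)=0$, so $(X,\omega)\in\RR_1(6,-2,-2,-2)$. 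But $\sigma(z)=\rho z$ also preserves the divisor (it permutes $e_1,e_2,e_3$ cyclically), hence $\sigma^{\ast}\omega=\lambda\omega$: an order-$3$ symmetry of a residueless surface with $p=3$, cyclically permuting the three polar domains. So no ``sector bookkeeping'' can force the polar domains to be $\sigma$-invariant; that paragraph should be deleted, and the pole-fixing should rest squarely on the labeling convention, as in the paper.
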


\begin{proof}
The automorphism $\sigma$ preserves the translation structure so it must fix the (labeled) poles. Moreover, for a given polar domain, it should act as a nontrivial permutation of the boundary saddle connections (otherwise $\sigma$ fixes all the points of the polar domain and thus coincides with $Id$ on the whole surface).

Following Lemma~\ref{lem:coreformula} and Corollary~\ref{cor:core}, the number of sides of any polar domain is either $2$, $3$ or $4$. If one polar domain has two sides, then $\sigma$ permutes the two boundary saddle connections and $\sigma^{2}=Id$. As the two boundary saddle connections are homologous, we have $\sigma^{\ast} \omega = -\omega$ on the polar domain (and thus the whole translation surface).

If no polar domain is bounded by two saddle connections, then equation $2p+2=T+\sum \alpha_{j}$ holds only in three cases:
\begin{itemize}
    \item $T=1$, $p=1$ and the unique polar domain has three boundary saddle connections;
    \item $T=0$, $p=2$ and the two polar domains have three boundary saddle connections;
    \item $T=0$, $p=1$ and the unique polar domain has four boundary saddle connections.
\end{itemize}
\end{proof}

\section{Period coordinates on moduli spaces}\label{sec:Moduli}

\subsection{Strata of translation surfaces}

For integers $a_{1},\dots,a_{n} \geq 1$ and $b_{1},\dots,b_{p} \geq 1$ satisfying $\sum a_{i} - \sum b_{j}=2g-2$ for some genus $g \geq 0$, denote $\mu\coloneqq (a_{1},\dots,a_{n},-b_{1},\dots,-b_{p})$. The stratum of meromorphic differentials $\RH_{g}(\mu)$ is defined to be the moduli space of meromorphic differentials on compact Riemann surfaces of genus $g$ with orders of zeroes and poles prescribed by $\mu$ (up to biholomorphism).

\subsection{Period coordinates}

For any point $(X,\omega)\in \RH_{g}(\mu)$, the integration of the differential $\omega$ along various paths determines a representation of relative homology group $H_{1}(X \setminus \lbrace{ poles \rbrace}, \lbrace{ zeroes \rbrace};\mathbb{Z})$ into $(\mathbb{C},+)$.

For a symplectic basis $\{\beta_1,\alpha_1,\dots,\beta_g,\alpha_g\}$ of $\textnormal{H}_1(X;\mathbb Z)\cong \mathbb Z^{2g}$, the integration of $\omega$ along these curves provides a system of $2g$ local coordinates called \textit{absolute periods} and the residues computed around the poles of $\omega$ provide additional $p-1$ local coordinates on the stratum $\RH_{g}(\mu)$. The integration of $\omega$ along $n-1$ arcs joining the zeroes of $\omega$ provides the last $n-1$ local coordinates (the so-called \textit{relative periods}). These $2g+n+p-2$ local coordinates completely determine the translation structure of $(X,\omega)$. They form the \textit{period atlas} of the stratum $\RH_{g}(\mu)$ and endow it with a structure of a complex-analytic orbifold of dimension $2g+n+p-2$.

\subsection{Action of $\glplus$}

The moduli space of translation surfaces features a natural action of $\glplus$ by postcomposition in the period coordinate charts. In particular, it acts on the periods of the differential inducing the translation structure.\newline
The action does not preserve angles but it preserves the topological indices and the orders of the singularities. Therefore, it preserves strata. Moreover, the zero residue condition is also preserved so there is an action of $\glplus$ on the locus of residueless differentials. In particular, in the stratum $\RH_1(a,-b_{1},\dots,-b_{p})$ with $b_{1},\dots,b_{p} \geq 2$, the residueless locus $\RR_1(a,-b_{1},\dots,-b_{p})$ is a $\glplus$-invariant closed $2$-dimensional subvariety defined by linear equations in the period coordinates. It is an example of a \textit{Teichm\"{u}ller curve in a meromorphic stratum} according to the definition introduced in \cite{MoMu}.\newline

Scalings form a subgroup of $\glplus$ so the quotient group $\pslr$ acts on projectivized fibers.\newline

The \textit{Veech group} of a translation surface is the stabilizer of the action $\glplus$ on it. Veech groups of translation surfaces induced by meromorphic $1$-forms have been classified in \cite{Ta2}.

\subsection{Classification of connected components of strata}

In \cite{Bo}, Boissy provided a classification of connected components of strata of meromorphic differentials. In genus one, the components are classified in terms of rotation numbers (see Section~\ref{sub:rotation}).

\begin{thm}\label{thm:Boissy}
Let $\mathcal{H}=\mathcal{H}_{1}(a_{1},\dots,a_{n},-b_{1},\dots,-b_{p})$ be a stratum of translation surfaces of genus one. We denote by $c$ the greatest common divisor of the orders of the singularities.
\par
For each divisor $\rho$ of $c$ (excepted $c$ itself in the case $n=p=1$), there is exactly one connected component $\mathcal{H}^{\rho}$ of the stratum containing those translation surfaces whose rotation number is equal to $\rho$.
\par
The stratum $\mathcal{H}$ is the disjoint union of the connected components $\mathcal{H}^{\rho}$.
\end{thm}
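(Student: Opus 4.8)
The plan is to establish that the rotation number is a \emph{complete} invariant of connected components, which breaks into three tasks: (i) the rotation number is locally constant, hence constant on each connected component; (ii) every divisor $\rho$ of $c$ (apart from the stated exception) is realized by some surface in $\mathcal{H}$; and (iii) any two translation surfaces with the same rotation number lie in the same connected component. Granting these, the stratum decomposes as the disjoint union of the nonempty level sets $\mathcal{H}^{\rho}$, which is exactly the assertion.

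For (i), fix a symplectic basis $\alpha,\beta$ of $H_{1}(\mathbb{T}^{2})$. The topological indices $Ind_{\alpha}$ and $Ind_{\beta}$ are integers depending only on the homotopy classes of $\alpha$ and $\beta$ in the punctured surface. Along any continuous path inside the stratum one transports the basis continuously, so these integers vary continuously while remaining integer-valued, and are therefore locally constant; hence so is $\gcd(Ind_{\alpha},Ind_{\beta},c)$. Independence of the chosen symplectic basis is already built into the definition of the rotation number recorded above, so the invariant descends to connected components. Since the level sets of a locally constant function are open and closed, this already gives the disjointness of the pieces $\mathcal{H}^{\rho}$.

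For (ii), I would assemble explicit flat models from polygons with edge identifications, or from cylinders glued along their boundary saddle connections, arranging the conical angles to match $\mu$ and choosing the gluings so that the generators $\alpha,\beta$ carry prescribed indices; varying the construction realizes every $\rho\mid c$. The exceptional case $n=p=1$ requires the top value $\rho=c=a$ to be excluded: here $\mathrm{div}(\omega)=aP-aQ\sim 0$, so $P-Q$ is a torsion point of order dividing $a$, and the value $\rho=c$ would correspond to the degenerate situation $P=Q$, where the zero and the pole coincide. Thus $\rho=c$ is not attained in $\mathcal{H}_{1}(a,-a)$.

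For (iii), the crux, I would connect an arbitrary surface to a fixed normal form by surgeries that preserve both the stratum and the rotation number. The principal move is \emph{breaking up a zero} and its inverse, \emph{collapsing a saddle connection} to merge two zeroes: such a surgery stays within one connected component and alters the indices in a controlled way that leaves $\gcd(Ind_{\alpha},Ind_{\beta},c)$ unchanged. The strategy is to collapse saddle connections until all zeroes are merged into a single one, reducing to the minimal stratum $\mathcal{H}_{1}(a,-b_{1},\dots,-b_{p})$ with $n=1$; there one analyzes the surface through its cylinder decomposition and uses the $\glplus$-action together with Dehn-twist moves to bring the combinatorics to a standard form depending only on $\rho$. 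Reconstructing the general configuration by breaking up the single zero, and checking that distinct reconstructions with equal $\rho$ stay in one component, completes the argument. The main obstacle is precisely this connectedness within a fixed rotation number: one must verify that a suitable short saddle connection is always available to perform the merging, that merging and breaking never change the rotation number, and that after reduction to the one-zero case the remaining moduli — the absolute-period lattice (whose parameter space, essentially the upper half-plane, is connected) together with the residue and relative-period data — can be deformed to the normal form without leaving the component. Carrying out these deformations simultaneously while ruling out spurious extra components beyond those predicted by $\rho$ is where the genuine work lies.
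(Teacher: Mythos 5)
This statement is not proved in the paper at all: it is Boissy's classification theorem, quoted verbatim from \cite{Bo} (the paper introduces it with ``In \cite{Bo}, Boissy provided a classification\dots''), so there is no in-paper argument to compare your proposal against. Judged on its own terms, your proposal gets the overall structure right and handles the two easy ingredients correctly: local constancy of $\gcd(Ind_{\alpha},Ind_{\beta},c)$ along paths in the stratum is exactly as you say, and your explanation of the exceptional case is the right one --- in $\mathcal{H}_{1}(a,-a)$ one has $\mathrm{div}(\omega)=aP-aQ\sim 0$, so $P-Q$ is $a$-torsion, and rotation number $\rho=c=a$ would force $P=Q$, which is impossible since the zero and the pole are distinct points.

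The genuine gap is part (iii), and it is not a peripheral one: connectedness of each level set $\mathcal{H}^{\rho}$ \emph{is} the theorem, and what you write for it is a research plan, not a proof. None of the needed assertions is established: that a saddle connection suitable for merging zeroes always exists and that collapsing it stays inside the stratum (delicate for meromorphic strata, where the flat metric is infinite near the poles and the core may be degenerate); that merging and breaking zeroes preserve the rotation number; and, hardest of all, that in the single-zero stratum any two surfaces with equal $\rho$ are connected by a path --- cylinder twists and the $\glplus$-action alone do not obviously sweep out a full level set, and one must both produce explicit normal forms and rule out additional components beyond those detected by $\rho$. You concede this yourself (``where the genuine work lies''), which is precisely an admission that the core of the argument is missing; supplying it is the content of Boissy's paper \cite{Bo}, whose proof indeed proceeds by the kind of surgery and normal-form reduction you sketch, but whose difficulty lies exactly in the steps your proposal defers.
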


It has been proved in Theorem~1.1 of \cite{GT} that in each connected component of a stratum in genus $g \geq 1$, the residueless locus is nonempty.

\subsection{Classification of connected components of residueless strata}\label{sub:CCRS}
    
The connected components of the stratum $\RR_1(\mu)$ of residueless meromorphic differentials are classified by Theorem~1.4 and Theorem~1.7 in \cite{Lee}. In addition to the rotation number, we need one more topological invariant: the permutation induced by the hyperelliptic involution. 

We recall the definition of the stratum of hyperelliptic type and the ramification profile from \cite{Lee}, simplified for the single-zero stratum of genus one. 

\begin{defn}
A stratum $\RR_1 (\mu)$ is said to be of {\em hyperelliptic type} if there exists an involution $\RP\in S_{p}$ satisfying $b_i=b_{\RP(i)}$ for each $i$, $\RP$ fixes at most three numbers and for each such fixed $j$, $b_j$ is even. 

If $\mu$ is of hyperelliptic type, the involution $\RP$ that exhibits $\mu$ as being of hyperelliptic type is called a {\em ramification profile} of $\RR_1 (\mu)$. 
\end{defn}

\begin{thm} \label{hyper}
A stratum $\R_1 R(\mu)$ of residueless meromorphic differentials has a hyperelliptic component if and only if the stratum is of hyperelliptic type. There is a one-to-one correspondence between the hyperelliptic components and the ramification profiles of $\RR_1 (\mu)$.
\end{thm}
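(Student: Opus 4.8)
The plan is to prove both directions by passing to the quotient of $X$ by a hyperelliptic involution and analyzing anti-invariant differentials, and then to upgrade the resulting hyperelliptic locus to a union of connected components by a dimension count together with a closedness argument.

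\textbf{Necessity and identification of the profile.} Suppose $(X,\omega)\in\RR_1(\mu)$ carries a hyperelliptic involution $\sigma$. Since $X$ has genus one and $X/\sigma\cong\cpp$, Riemann--Hurwitz forces $\sigma$ to have exactly four fixed points; thus $\sigma$ is the elliptic involution $z\mapsto -z$ for a suitable origin, its fixed points being the four two-torsion points. Because $\omega$ has a single zero, $\sigma$ fixes that zero, and on the set of poles it induces an involution $\RP\in S_{p}$ with $b_i=b_{\RP(i)}$. Writing $\sigma(z)=-z$ in a local coordinate at any fixed point and imposing $\sigma^{\ast}\omega=-\omega$ shows that $\omega=f(z)\,dz$ with $f$ even, so the order of $\omega$ at every fixed point is even. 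The four fixed points are therefore the zero (of order $a$, necessarily even), the $s$ poles fixed by $\RP$ (each of even order), and $3-s$ regular points, whence $s\le 3$. This is precisely the statement that $\mu$ is of hyperelliptic type with ramification profile $\RP$, and $\RP$ is read off as the induced permutation.

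\textbf{Sufficiency and existence.} Conversely, given $\mu$ of hyperelliptic type with profile $\RP$, I would realize the hyperelliptic locus through the double cover. Write $X:\,y^{2}=\prod_{i=1}^{4}(x-q_i)$ with $\sigma:\,y\mapsto -y$, so that $dz=dx/y$ is anti-invariant and is regular and nonzero at the branch points. Every anti-invariant meromorphic differential is then $\omega=P(x)\,dx/y$ for a rational function $P$ on $\cpp$; the order of $\omega$ at a branch point is twice the order of $P$ there, while at the two preimages of a non-branch point it equals the order of $P$. Encoding $\mu$ thus becomes a prescription of zero and pole orders for $P$ (a zero of order $a/2$ at the image of the zero, poles of order $b_j/2$ at the fixed poles, and poles of order $b_k$ at the images $x_k$ of the paired poles), together with vanishing of the residue at each $x_k$; the residues at the fixed poles vanish automatically, since anti-invariance gives $\operatorname{Res}_p\omega=-\operatorname{Res}_p\omega$ there. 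The positions of the branch points and the $x_k$ modulo $\PGL_2(\C)$, together with the linear system of admissible $P$, form an irreducible parameter space surjecting onto the hyperelliptic locus; a Riemann--Roch count on $\cpp$ gives the expected dimension equal to $\dim_{\C}\RR_1(\mu)=2$ and exhibits the locus as nonempty.

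\textbf{From locus to components and the bijection.} Hyperellipticity is a closed condition in $\RR_1(\mu)$: along a family of hyperelliptic differentials the involutions vary continuously and their limit is again a hyperelliptic involution, so the locus with a fixed induced permutation is closed. The dimension count makes it full-dimensional, hence a union of connected components of the pure-dimensional orbifold $\RR_1(\mu)$, and the irreducibility of the parameter space shows that it is a single component for each profile $\RP$. Since the induced permutation is a locally constant topological invariant, distinct ramification profiles label distinct components, which yields the claimed one-to-one correspondence between hyperelliptic components and ramification profiles.

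\textbf{Main obstacle.} The delicate point is the sufficiency step: one must show that the residueless conditions at the paired poles are compatible with the prescribed high-order vanishing of $P$ at the image of the zero, so that the admissible linear system is nonempty and cuts out exactly the expected dimension rather than over-determining $P$. I expect this to require a careful analysis of the interplay between the vanishing functionals and the residue functionals in the Riemann--Roch count on $\cpp$, or, alternatively, an explicit construction by gluing centrally symmetric polygons that produces residueless anti-invariant differentials directly; the latter certifies nonemptiness, and the connectedness of the resulting family is what promotes the closed full-dimensional locus to an entire connected component.
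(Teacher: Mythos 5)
A preliminary remark on the comparison itself: the paper contains no proof of this statement. Theorem~\ref{hyper} is imported wholesale from the first author's earlier work (Theorems~1.4 and 1.7 of \cite{Lee}), as announced at the start of Section~\ref{sub:CCRS}, so there is no internal proof to measure your argument against; it can only be judged on its own merits. On those merits, your necessity direction is correct and complete: Riemann--Hurwitz gives exactly four fixed points, the unique zero is preserved and hence fixed, the local evenness of $f$ at fixed points forces even orders there, and since the zero occupies one of the four fixed points at most three poles can be fixed --- which is precisely the definition of hyperelliptic type with profile $\RP$.

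The gaps are in the remaining two thirds of the statement. First, existence: as you concede in your ``main obstacle'' paragraph, you never show that the residue-vanishing conditions (one for each $\RP$-orbit of paired poles) admit a common solution in the configuration space of branch points and pole images. Note that for a fixed configuration the function $P$ is unique up to scale, since its divisor is prescribed and automatically has degree zero on $\cpp$; so the entire content of nonemptiness lives in those conditions on the configuration, and nothing in your proposal addresses them. Without this, the ``if'' direction is unproven. Second, the claimed bijection: the inference ``the irreducibility of the parameter space shows that it is a single component for each profile'' is invalid. The hyperelliptic locus is the image not of the irreducible parameter space but of the closed analytic subset of it cut out by the residue functionals, and the zero locus of several holomorphic functions on an irreducible space need not be irreducible, nor even connected; so your argument does not exclude that one ramification profile yields several hyperelliptic components. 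Finally, a smaller unaddressed point: to conclude that distinct profiles label distinct components you implicitly assume that a component carries a single well-defined induced permutation, i.e.\ essentially that the hyperelliptic involution of a given $(X,\omega)$ is unique; since surfaces with $p\leq 2$ can carry extra automorphisms of order $3$ or $4$ (Proposition~\ref{prop:symmetries}), this uniqueness requires a short argument rather than being automatic.
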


\begin{rmk}  \label{hyperremark}
    If $(X,\omega)\in \RR_1 (\mu)$ is hyperelliptic, then its hyperelliptic involution $\sigma$ must fix the unique zero of $(X,\omega)$ and induces a ramification profile of $\RR(\mu)$. Therefore, $(X,\omega)$ is contained in the hyperelliptic connected component. In other words, only hyperelliptic components contain the hyperelliptic translation surfaces. Note that this is not true in general if there is more than one zero. 
\end{rmk}

\begin{thm} \label{nonhyper}
Denote $d \coloneqq \gcd(b_1,\dots, b_p)$ and let $r$ be a positive integer divisor of $d$. Then the stratum $\RR_1 (\mu)$ has a unique non-hyperelliptic connected component $\RC_r$ with rotation number $r$, except for the following cases:
\begin{itemize}
    \item If $\mu=(2p,-2^p)$, then the stratum $\RR_1(\mu)$ does not have any non-hyperelliptic component;
    \item If $\mu=(r,-r)$, $\mu=(2r,-2r)$ or $\mu=(2r,-r,-r)$, then the stratum $\RR_1(\mu)$ does not have any non-hyperelliptic component of rotation number $r$;
    \item If $\mu=(12,-3^4)$ and $r=3$, then $\RR_1(\mu)$ has two non-hyperelliptic components of rotation number $3$ (classified in Theorem~\ref{main1}).
\end{itemize}
\end{thm}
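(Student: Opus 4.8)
The plan is to prove this in the standard two-part shape used for classifying connected components of strata: first show that the rotation number together with the hyperelliptic type form a complete set of locally constant invariants (giving a lower bound on the number of components), then prove connectivity within each invariant class (giving the matching upper bound). Since the rotation number is a gcd of topological indices (Section~\ref{sub:rotation}) it is locally constant on $\RR_1(\mu)$, and by Theorem~\ref{hyper} together with Remark~\ref{hyperremark} the existence of a hyperelliptic involution inducing a fixed ramification profile cuts out a union of components. Hence for each divisor $r \mid d$ the non-hyperelliptic locus of rotation number $r$ is already a union of components, and the theorem reduces to (i) realizing each value $r$ by a non-hyperelliptic surface and (ii) showing this union is connected in the generic case. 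For realization I would build, for each $r \mid d$, an explicit cylinder (or polygon) model whose waist curves have topological index divisible by $r$; the hypothesis $r \mid b_i$ for every $i$ is exactly what makes such a model compatible with the prescribed pole orders, and one arranges the construction to avoid any involution with $\sigma^\ast\omega=-\omega$ so that the representative is non-hyperelliptic.

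The heart of the argument is connectivity. I would exploit the very restrictive shape of the core from Corollary~\ref{cor:core}: because each polar domain of a residueless differential has at least two boundary saddle connections, the interior of the core is empty, a single triangle, or one of three two-triangle configurations, so only finitely many combinatorial types occur. Working inside $\RR_1(\mu)$, I would then use surgeries realized by explicit paths in the period coordinates --- collapsing and reopening saddle connections, sliding the single zero across the core, and redistributing the polar domains along cylinders --- to bring an arbitrary surface of rotation number $r$ to a fixed standard representative. Each such move is a deformation that fixes the winding data of a chosen symplectic basis modulo $c$, hence preserves the rotation number, and one checks that it also preserves (or avoids) the hyperelliptic profile, so that a non-hyperelliptic surface never has to pass through the hyperelliptic locus. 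This is where I expect the main technical work: verifying that the limited core configurations always admit a surgery toward the normal form, and that two normal forms of the same rotation number are joined without crossing into a different ramification profile.

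The principal obstacle is the breakdown of this scheme in the exceptional strata, which is precisely the source of the exclusion list. The cases $\mu=(r,-r)$, $(2r,-2r)$, $(2r,-r,-r)$ and $(2p,-2^p)$ are small strata where either the rotation number $r$ is not realized at all (for $(r,-r)$ this is the $n=p=1$ exclusion already visible in Theorem~\ref{thm:Boissy}) or every surface of that rotation number is forced to carry a hyperelliptic involution by the extremely limited core combinatorics, so the non-hyperelliptic component is empty; each of these must be settled by a direct symmetry analysis of the few admissible configurations rather than by the generic surgery. The genuinely hard case is $\mu=(12,-3^4)$ with $r=3$, where the surgery argument fails to connect two a priori distinct families and the coarse invariants cannot tell them apart; separating them requires the finer data coming from the cellular decomposition and equatorial nets of Section~\ref{sec:Cellular}, and is precisely the content of Theorem~\ref{main1}. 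I would therefore expect the difficulty to concentrate in (a) the combinatorial bookkeeping showing the surgery moves suffice for all non-exceptional $\mu$, and (b) the delicate, extra-automorphism-driven treatment of the exceptional list.
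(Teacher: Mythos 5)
There is a genuine gap here, and it is concentrated exactly where you yourself admit the work lies. The entire content of Theorem~\ref{nonhyper} is (a) the connectivity statement in the generic case and (b) the derivation of the precise exceptional list, and your proposal carries out neither: no normal form is constructed, no list of surgery moves is given, and no verification is made that such moves stay inside the residueless locus. Worse, the imagined mechanism does not fit this setting. Since $\mu$ has a single zero, there are no relative periods, and the $p-1$ residue conditions cut $\PP\RR_1(\mu)$ down to complex dimension one; the only deformations available are the two absolute periods (projectively, one parameter). There is therefore no room for ``collapsing and reopening saddle connections, sliding the zero, redistributing polar domains'' as independent moves: a path in $\PP\RR_1(\mu)$ is determined by how it crosses the finitely many equatorial arcs, and connectivity is a \emph{finite combinatorial} question about the incidence graph of cells and arcs (equivalently, about which boundary points of the multi-scale compactification can be linked through cells). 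Likewise, asserting that the excluded strata ``must be settled by a direct symmetry analysis'' restates the problem rather than solving it: the crux of any such classification is proving that the exceptions are exactly $(2p,-2^p)$, $(r,-r)$, $(2r,-2r)$, $(2r,-r,-r)$ and $(12,-3^4)$ with $r=3$, and nothing in your outline produces that list.

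You should also note how this statement sits in the paper: the generic classification is not proved here at all but imported from Theorem~1.4 and Theorem~1.7 of \cite{Lee}, and the only assertion established in this paper is the exceptional one, namely Theorem~\ref{main1}, proved in Section~\ref{sec:Except}. That proof is exactly the combinatorial argument your proposal avoids. One shows that any non-hyperelliptic component of rotation number $3$ contains a boundary point $X_{\tau}=X(4,\tau,(1,2,2,1),[(0,0)])$, enumerates all cells and equatorial arcs reachable from it ($\mathrm{Cyl}$, $U$, $T$, $S$, $Q$, $R$, $P$, $N$ and the arcs $DA,\dots,DK$), and observes that traveling around cycles of this incidence graph relabels the poles by permutations that generate $A_{4}\trianglelefteq S_{4}$; hence $X_{\tau_1}$ and $X_{\tau_2}$ lie in the same component if and only if $\tau_1\circ\tau_2^{-1}\in A_{4}$, giving exactly two components indexed by the cosets in $S_{4}/A_{4}$. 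Your deferral of this case to Theorem~\ref{main1} is admissible within the paper's logical structure (that theorem has its own proof), but it means your text supplies no argument for the one claim of Theorem~\ref{nonhyper} that this paper actually proves, while for the remaining claims it offers a plan whose decisive steps are both unexecuted and, in this one-dimensional situation, of the wrong kind.
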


\subsection{Complex projective structure on residueless loci}

A neighborhood of a translation surface $(X,\omega)$ in the residueless stratum $\RR_1(\mu)$ can be described by the period coordinates. Let $\alpha,\beta$ be two simple closed curves in $X$ that form a symplectic basis of $H_1(X,\mathbb{Z})$. Then the period coordinates can be given by $\left(\int_{\alpha} \omega, \int_{\beta} \omega\right)\subset \mathbb{C}^2$. Therefore the local neighborhood of $[(X,\omega)]$ in the projectivized stratum $\PP\RR_1(\mu)$ can be parameterized by $\left[\int_{\alpha} \omega, \int_{\beta} \omega\right]\in \mathbb{CP}^1$. 

If we choose another symplectic basis $\{\alpha',\beta'\}$ of $X$, then the two basis $\{[\alpha],[\beta]\}$ and $\{[\alpha'],[\beta']\}$ are related by $\slz$-action. That is, there exists $A\in \slz$ such that $A\circ \left([\alpha],[\beta]\right) = \left([\alpha'],[\beta']\right)$. So the transition function between the two charts $\left[\int_{\alpha} \omega, \int_{\beta} \omega\right]$ and $\left[\int_{\alpha'} \omega, \int_{\beta'} \omega\right]$ is given by the induced $\pslz$-action on $\mathbb{CP}^1$. Therefore, $\PP\RR_1(\mu)$ is endowed with a complex projective structure with real monodromy. 

\section{Cellular decomposition}\label{sec:Cellular}

\subsection{Equatorial net}

As a group of complex projective transformations acting on $\mathbb{CP}^{1}$, $\pslr$ preserves the real equator $\mathbb{RP}^{1}$ and each hemisphere.

\begin{defn}
In a Riemann surface endowed with a complex projective structure with real monodromy, the \textit{equatorial net} is the preimage of $\mathbb{RP}^{1} \subset \mathbb{CP}^{1}$ in each chart of the atlas. It is an embedded graph formed by circular arcs. Each of these arcs is called an \textit{equatorial arc}.\newline
Connected components of the complement of the equatorial net are called the \textit{cells} of the cellular decomposition of the Riemann surface. Each cell is bounded by finitely many \textit{boundary arcs} that are edges of the equatorial net.
\end{defn}

In the complex projective structure defined by the period atlas, the equatorial net of $\mathbb{P}\mathcal{R}_{1}(\mu)$ is formed by differentials whose absolute periods are real collinear (they do not generate a lattice of $\mathbb{C}$).

The sign of the volume of differentials of a given cell depends on whether the projective coordinate belongs to the lower or the upper half-hemisphere (see Section~\ref{sub:volume}).

\subsection{Classification of cells}

As they are parametrized by an open hemisphere of $\mathbb{CP}^{1}$, each cell is at the same time a $\glplus$-orbit in $\mathbb{P}\mathcal{R}_{1}(\mu)$. It follows that for any surface $X$ in a given cell, the topological pair $(X,C(X))$ formed by the embedding of the core $C(X)$ in the surface $X$ is the same up to homeomorphism. The possible pairs $(X,C(X))$ have been classified in Corollary~\ref{cor:core} in terms of the number $T$ of triangles in any triangulation of the interior of $C(X)$. We deduce the corresponding typology for cells of $\PP\RR_1(\mu)$.\newline
A cell is of:
\begin{itemize}
    \item \textit{"degenerate type"} if $C(X)$ has empty interior ($T=0$);
    \item \textit{"1-triangle type"} if the interior of $C(X)$ is a triangle ($T=1$);
    \item \textit{"quadrilateral type"} if the interior of $C(X)$ is a topological quadrilateral ($T=2$);
    \item \textit{"cylinder type"} if the interior of $C(X)$ is a topological cylinder ($T=2$);
    \item \textit{"2-triangle type"} if the interior of $C(X)$ is formed by two disjoint triangles ($T=2$).
\end{itemize}
Moreover, we will split cells of "degenerate type" into two subclasses. A cell will be of \textit{"3-degenerate type"} if for surfaces of the cell exactly two polar domains have three boundary edges (while the others have two boundaries). A cell will be of \textit{"4-degenerate type"} if for surfaces of the cell one polar domain has four boundary edges.\newline
For translation surfaces in each type of cells, the configuration of saddle connections as an embedded graph in the surface is depicted in Figure~\ref{types}. The types of arrows indicates the families of parallel saddle connections. Among the edges marked with a simple arrow, the leftmost edge is identified with the rightmost edge with a translation (preserving in particular the directions). Similarly, among the edges marked with a double arrow, the uppermost edge is identified with the lowermost edge. As a result, we construct a torus, and all four vertices are identified to the unique zero $z$. The faces of the embedded graph are the polar domains (in white) and the interior of the core (shaded).\newline
An explicit example of a translation surface is given in Figure~\ref{iso_2}. In this case, where the differential belongs to a cell of $4$-degenerate type, the surface is formed by only one polar domain with four boundary arcs identified by pairs. The translation structure is entirely determined by the lengths of the two resulting saddle connections and the angle magnitudes at the four inner corners of the polar domain.

\begin{figure}
    \centering
    \input{diagrams/types} 
    \caption{Types of cells} \label{types}
\end{figure}

\begin{rmk}
As we will see in Section~\ref{sub:2cells}, for a same partition $\mu$, $\mathbb{P}\mathcal{R}_{1}(\mu)$ can contain several cells of the same type (determined by the topological pair defined by the core inside the surface). Angles between parallel saddle connections are integer multiples of $2\pi$ and these weights are combinatorial invariants that can be used to distinguish two cells of the same type.
\end{rmk}

\subsection{Exceptionally symmetric cells}\label{sub:ExcSymCel}

For a given cell, the period coordinate is given by a choice of two saddle connections $\alpha,\beta$ that form a symplectic basis of $X$. If the cell does not have a symmetry that sends the symplectic basis $\{\alpha,\beta\}$ to another basis, then the cell is isomorphic to a hemisphere. That is, the period map from the hemisphere is injective into $\PP\RR_1(\mu)$. If there exists no such symplectic basis, then we need to take care of isomorphic translation surfaces given by change of basis.

\begin{figure}
    \centering
    \tikzset{every picture/.style={line width=0.75pt}} %set default line width to 0.75pt        

\begin{tikzpicture}[x=0.75pt,y=0.75pt,yscale=-1,xscale=1]
%uncomment if require: \path (0,381); %set diagram left start at 0, and has height of 381

%Shape: Parallelogram [id:dp1868035464364073] 
\draw  [fill={rgb, 255:red, 200; green, 200; blue, 200 }  ,fill opacity=1 ] (409.5,182.95) -- (409.5,20.6) -- (590.7,181.03) -- (590.7,343.38) -- cycle ;
%Shape: Rectangle [id:dp053931277063658634] 
\draw  [fill={rgb, 255:red, 200; green, 200; blue, 200 }  ,fill opacity=1 ] (88.39,100.6) -- (269.99,100.6) -- (269.99,261.9) -- (88.39,261.9) -- cycle ;
%Curve Lines [id:da08262903047072223] 
\draw    (88.39,100.6) .. controls (43.85,130.75) and (44.24,238.53) .. (88.39,261.9) ;
%Curve Lines [id:da2703302392183129] 
\draw    (88.39,100.6) .. controls (10.45,144.32) and (5.84,230.24) .. (88.39,261.9) ;
\draw   (52.3,183.51) -- (54.99,178.61) -- (57.67,183.51) ;
\draw   (85.7,183.51) -- (88.39,178.61) -- (91.08,183.51) ;
\draw   (267.3,182.75) -- (269.99,177.86) -- (272.68,182.75) ;
\draw   (25.81,182.38) -- (28.49,177.48) -- (31.18,182.38) ;
%Shape: Ellipse [id:dp8141690125041614] 
\draw  [fill={rgb, 255:red, 0; green, 0; blue, 0 }  ,fill opacity=1 ] (267.5,261.9) .. controls (267.5,260.54) and (268.61,259.45) .. (269.99,259.45) .. controls (271.37,259.45) and (272.49,260.54) .. (272.49,261.9) .. controls (272.49,263.25) and (271.37,264.34) .. (269.99,264.34) .. controls (268.61,264.34) and (267.5,263.25) .. (267.5,261.9) -- cycle ;
%Shape: Ellipse [id:dp017811174292883925] 
\draw  [fill={rgb, 255:red, 0; green, 0; blue, 0 }  ,fill opacity=1 ] (267.5,100.6) .. controls (267.5,99.25) and (268.61,98.15) .. (269.99,98.15) .. controls (271.37,98.15) and (272.49,99.25) .. (272.49,100.6) .. controls (272.49,101.95) and (271.37,103.05) .. (269.99,103.05) .. controls (268.61,103.05) and (267.5,101.95) .. (267.5,100.6) -- cycle ;
%Shape: Ellipse [id:dp7653986323046298] 
\draw  [fill={rgb, 255:red, 0; green, 0; blue, 0 }  ,fill opacity=1 ] (85.89,100.6) .. controls (85.89,99.25) and (87.01,98.15) .. (88.39,98.15) .. controls (89.77,98.15) and (90.88,99.25) .. (90.88,100.6) .. controls (90.88,101.95) and (89.77,103.05) .. (88.39,103.05) .. controls (87.01,103.05) and (85.89,101.95) .. (85.89,100.6) -- cycle ;
%Shape: Ellipse [id:dp8003530556415821] 
\draw  [fill={rgb, 255:red, 0; green, 0; blue, 0 }  ,fill opacity=1 ] (85.89,261.9) .. controls (85.89,260.54) and (87.01,259.45) .. (88.39,259.45) .. controls (89.77,259.45) and (90.88,260.54) .. (90.88,261.9) .. controls (90.88,263.25) and (89.77,264.34) .. (88.39,264.34) .. controls (87.01,264.34) and (85.89,263.25) .. (85.89,261.9) -- cycle ;
%Curve Lines [id:da5427534235006026] 
\draw    (409.39,20.6) .. controls (364.85,50.75) and (365.24,158.53) .. (409.39,181.9) ;
%Curve Lines [id:da39620836313681895] 
\draw    (409.39,20.6) .. controls (331.45,64.32) and (326.84,150.24) .. (409.39,181.9) ;
\draw   (373.3,103.51) -- (375.99,98.61) -- (378.67,103.51) ;
\draw   (406.7,103.51) -- (409.39,98.61) -- (412.08,103.51) ;
\draw   (346.81,102.38) -- (349.49,97.48) -- (352.18,102.38) ;
%Shape: Ellipse [id:dp5218716737978704] 
\draw  [fill={rgb, 255:red, 0; green, 0; blue, 0 }  ,fill opacity=1 ] (588.5,181.9) .. controls (588.5,180.54) and (589.61,179.45) .. (590.99,179.45) .. controls (592.37,179.45) and (593.49,180.54) .. (593.49,181.9) .. controls (593.49,183.25) and (592.37,184.34) .. (590.99,184.34) .. controls (589.61,184.34) and (588.5,183.25) .. (588.5,181.9) -- cycle ;
%Shape: Ellipse [id:dp7707011024569099] 
\draw  [fill={rgb, 255:red, 0; green, 0; blue, 0 }  ,fill opacity=1 ] (406.89,20.6) .. controls (406.89,19.25) and (408.01,18.15) .. (409.39,18.15) .. controls (410.77,18.15) and (411.88,19.25) .. (411.88,20.6) .. controls (411.88,21.95) and (410.77,23.05) .. (409.39,23.05) .. controls (408.01,23.05) and (406.89,21.95) .. (406.89,20.6) -- cycle ;
%Shape: Ellipse [id:dp6632805184055057] 
\draw  [fill={rgb, 255:red, 0; green, 0; blue, 0 }  ,fill opacity=1 ] (406.89,181.9) .. controls (406.89,180.54) and (408.01,179.45) .. (409.39,179.45) .. controls (410.77,179.45) and (411.88,180.54) .. (411.88,181.9) .. controls (411.88,183.25) and (410.77,184.34) .. (409.39,184.34) .. controls (408.01,184.34) and (406.89,183.25) .. (406.89,181.9) -- cycle ;
\draw   (587.95,261.6) -- (590.55,256.59) -- (593.23,261.56) ;
\draw   (187.76,102.75) -- (183.5,100.14) -- (187.42,97.04)(183.77,102.99) -- (179.51,100.38) -- (183.43,97.28) ;
\draw   (188.72,264.81) -- (184.5,262.14) -- (188.47,259.1)(184.73,264.99) -- (180.5,262.32) -- (184.47,259.28) ;
\draw   (509.56,112.36) -- (508.01,107.61) -- (513.01,107.8)(506.36,109.95) -- (504.82,105.2) -- (509.82,105.38) ;
\draw   (496.96,264.51) -- (496.01,259.6) -- (500.95,260.41)(494.09,261.73) -- (493.15,256.82) -- (498.08,257.62) ;
%Shape: Ellipse [id:dp4461014748817216] 
\draw  [fill={rgb, 255:red, 0; green, 0; blue, 0 }  ,fill opacity=1 ] (588.2,343.38) .. controls (588.2,342.02) and (589.32,340.93) .. (590.7,340.93) .. controls (592.08,340.93) and (593.2,342.02) .. (593.2,343.38) .. controls (593.2,344.73) and (592.08,345.82) .. (590.7,345.82) .. controls (589.32,345.82) and (588.2,344.73) .. (588.2,343.38) -- cycle ;

% Text Node
\draw (34.01,170.35) node [anchor=north west][inner sep=0.75pt]   [align=left] {...};
% Text Node
\draw (273.82,251.13) node [anchor=north west][inner sep=0.75pt]    {$z$};
% Text Node
\draw (355.01,90.35) node [anchor=north west][inner sep=0.75pt]   [align=left] {...};
% Text Node
\draw (595.82,171.13) node [anchor=north west][inner sep=0.75pt]    {$z$};
% Text Node
\draw (100.48,182) node    {$\alpha $};
% Text Node
\draw (180.48,273) node    {$\beta $};
% Text Node
\draw (421.98,110) node    {$\alpha $};
% Text Node
\draw (485.98,267.5) node    {$\gamma $};
% Text Node
\draw (301,167.98) node [anchor=north west][inner sep=0.75pt]  [font=\LARGE]  {$\cong $};

\end{tikzpicture} 
    \caption{An isomorphism of a surface in the cell of cylinder type} \label{cylinderauto}
\end{figure}

Any translation surface in a cell of cylinder type has infinitely many choices for $\{\alpha,\beta\}$. Suppose that $(X,\omega)$ is a translation surface in the cell of cylinder type. Consider a symplectic basis consisting of a core curve $\alpha$ and a cross curve $\beta$ of the cylinder. Then there always exists another cross curve $\gamma$ whose homology class is equal to $[\alpha]+[\beta]$. Let $x=\int_\alpha \omega$ and $y=\int_\beta \omega$. Then the period coordinate in this cell can be given by $[x,y]=[1,y/x]\in \mathcal{H}\subset \mathbb{CP}^1$. However, two translation surfaces given by $[x,y]$ and $[x,x+y]$ are isomorphic (see Figure \ref{cylinderauto}). This type of isomorphisms provides a free $\mathbb{Z}$-action on $\mathcal{H}$. Therefore the cells of cylinder type are isomorphic to $\mathcal{H}/\mathbb{Z}$, which can be identified with a punctured disk. The puncture corresponds to a boundary element obtained by sending the length of the cross curve of the cylinder to infinity. The limit object contains two simple poles, so the puncture parametrizes a horizontal boundary element (see Section~\ref{sec:Multiscale}). Also, this puncture is a cusp. 

In the following proposition, we classify exceptionally symmetric cells. They are the cells containing an orbifold point.

\begin{prop}\label{prop:ExcSymCel}
A stratum $\PP\RR_1(\mu)$ has a cell containing an orbifold point if and only if any of the following statements holds:
\begin{itemize}
    \item If $p=1$ and $a \in 2\mathbb{N}$, then the (hyperelliptic) connected component of rotation number $\frac{a}{2}$ has one cell of 4-degenerate type containing an orbifold point of order 2;
    \item If $p=1$ and $a \in 3\mathbb{N}$, then the connected component of rotation number $\frac{a}{3}$ has one cell of 1-triangle type containing an orbifold point of order 3; 
    \item If $p=2$ and $b_{1},b_{2} \in 3\mathbb{N}$, then the connected component of rotation number $\gcd(b_1,b_2,\frac{b_1-b_2}{3})$ has one cell of 3-degenerate type containing an orbifold point of order 3;
    \item If $p=2$ and $b_{1},b_{2} \in 3\mathbb{N}+2$, then the connected component of rotation number $\gcd(b_1,b_2,\frac{b_1-b_2}{3})$ has one cell of 3-degenerate type containing an orbifold point of order 3.
\end{itemize}
We refer to these cells as \textit{exceptionally symmetric cells}. Moreover, there are no other orbifold points in projectivized strata $\PP\RR_1(\mu)$.
\end{prop}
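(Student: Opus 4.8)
The plan is to translate the presence of an orbifold point at a surface $[(X,\omega)]$ into the presence of a nontrivial automorphism of $(X,\omega)$ acting nontrivially on the projective period coordinate, and then to enumerate such automorphisms using Proposition~\ref{prop:symmetries}. In the period chart $[(X,\omega)]\mapsto\left[\int_\alpha\omega:\int_\beta\omega\right]$, an automorphism $\sigma$ with $\sigma^{\ast}\omega=\lambda\omega$ induces the symplectic change of basis $M\in\slz$ recording $\sigma_\ast$ on $\textnormal{H}_1(X;\mathbb{Z})$, and $\int_{\sigma_\ast\gamma}\omega=\lambda\int_\gamma\omega$ shows that the period vector $v=\left(\int_\alpha\omega,\int_\beta\omega\right)$ is a $\lambda$-eigenvector of $M$; hence its class in $\mathbb{CP}^1$ is a fixed point of the M\"obius action of $M$. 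Thus $[(X,\omega)]$ is an orbifold point exactly when $M$ is nontrivial in $\pslz$, and the order of the point equals the order of $M$ in $\pslz$. Feeding in Proposition~\ref{prop:symmetries}, a hyperelliptic involution acts as $-\mathrm{Id}$, which is trivial in $\pslz$ and contributes no orbifold point; an order-$4$ automorphism (only for $p=1$) has $M^2=-\mathrm{Id}$, hence order $2$ in $\pslz$; and an order-$3$ automorphism (only for $p\in\{1,2\}$) gives $M$ of order $3$. In particular no orbifold point occurs once $p\geq 3$, and the cell types are read off directly from the core analysis in the proof of Proposition~\ref{prop:symmetries}: the order-$4$ case lies in a $4$-degenerate cell, the order-$3$/$p=1$ case in a $1$-triangle cell, and the order-$3$/$p=2$ case in a $3$-degenerate cell.

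Next I would realize these symmetric surfaces explicitly. An automorphism of order $k\in\{3,4\}$ forces $X$ to be the corresponding special elliptic curve ($\mathbb{C}/\mathbb{Z}[i]$ for $k=4$, $\mathbb{C}/\mathbb{Z}[\zeta_3]$ for $k=3$), with $\sigma$ multiplication by $\zeta$ ($\zeta=i$ resp.\ $\zeta_3$), so that $\sigma^{\ast}(dz)=\zeta\,dz$ and $d\sigma=\zeta$ at every fixed point. Since $\sigma$ fixes the labelled singularities, the unique zero and the poles must sit at fixed points of $\sigma$: two fixed points when $k=4$ (zero and pole, $p=1$) and three when $k=3$ (zero and one pole for $p=1$; zero and two poles for $p=2$). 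Writing $\omega=f\,dz$, the function $f$ is determined up to scale by its divisor $a[P_0]-\sum_i b_i[P_i]$, and Abel's theorem yields the existence conditions $2\mid a$ for $k=4$, $3\mid a$ for $k=3,\ p=1$, and $3\mid(b_1-b_2)$ for $k=3,\ p=2$. Residuelessness is then automatic: invariance of residues under $\sigma$ together with $\sigma^{\ast}\omega=\lambda\omega$ gives $(\lambda-1)\,\mathrm{Res\,}_{P_i}\omega=0$, so the residues vanish as soon as $\lambda\neq1$.

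The eigenvalue is computed at the zero to be $\lambda=\zeta^{a+1}$, and the orbifold point exists precisely when $\lambda\neq1$ (otherwise $M\,v=v$ is impossible, since the order-$k$ matrix $M$ has no eigenvalue $1$). For $k=4$ with $a$ even this holds automatically. For $k=3$ it requires $a\not\equiv2\pmod3$; combined with $3\mid(b_1-b_2)$ this discards exactly the case $b_1\equiv b_2\equiv1\pmod3$ (where $a\equiv2$) and leaves $b_1,b_2\in3\mathbb{N}^{\ast}$ or $b_1,b_2\in3\mathbb{N}+2$, as claimed, while for $p=1$ it reduces to $3\mid a$. Uniqueness of the orbifold point within each listed component follows from uniqueness of $f$ up to scale, once one checks that the finitely many placements of singularities at fixed points yield a single point of $\PP\RR_1(\mu)$ (the distinct placements being identified by the residual symmetries).

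It remains to determine the rotation number of the component carrying each orbifold point, which I expect to be the main obstacle. Since $\sigma$ is a flat isometry it preserves the topological indices, so $Ind_{\sigma_\ast\gamma}=Ind_\gamma$; combining this with the explicit matrix $M$ and with $c=\gcd$ of the singularity orders ($c=a$ for $p=1$, $c=\gcd(b_1,b_2)$ for $p=2$) one evaluates $\gcd(Ind_\alpha,Ind_\beta,c)$. This produces rotation number $a/2$ in the (hyperelliptic) order-$4$ case and $a/3$ in the order-$3$/$p=1$ case. The delicate computation is the order-$3$/$p=2$ case, where the winding contributions of the two polar domains of unequal orders $b_1,b_2$ produce the term $\tfrac{b_1-b_2}{3}$ and yield rotation number $\gcd\!\left(b_1,b_2,\tfrac{b_1-b_2}{3}\right)$. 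Finally, the converse is closed by Proposition~\ref{prop:symmetries}, which ensures that every nontrivial automorphism, hence every orbifold point, arises in one of the enumerated cases, so that no further orbifold points occur.
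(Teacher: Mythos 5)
Your reduction to automorphisms follows the same first step as the paper: via Proposition~\ref{prop:symmetries}, hyperelliptic involutions act by $-\mathrm{Id}$ on $\mathrm{H}_1$ and hence trivially on the projectivized period chart, so orbifold points can only come from order-$3$ automorphisms ($p\leq 2$, order $3$ in $\pslz$) or order-$4$ automorphisms ($p=1$, order $2$ in $\pslz$), and none occur for $p\geq 3$. Where you genuinely differ is in deciding when such symmetric surfaces exist in $\RR_1(\mu)$. The paper stays in flat geometry: it shows the symmetric surface must be the explicit one of Figure~\ref{iso_3} (resp.\ Figure~\ref{iso_2}), derives $3\mid b_1-b_2$ (resp.\ $3\mid a$, $2\mid a$) from the cone angles, and eliminates $b_1\equiv b_2\equiv 1\pmod 3$ because the corner angle is then $\equiv\pi\pmod{2\pi}$, incompatible with $\sigma$ permuting the boundary periods transitively. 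You instead realize the candidates algebraically on $\mathbb{C}/\mathbb{Z}[i]$ and $\mathbb{C}/\mathbb{Z}[\zeta_3]$ with singularities at fixed points, get existence from Abel's theorem, residuelessness from $(\lambda-1)\,\mathrm{Res}=0$, and eliminate $b_1\equiv b_2\equiv 1\pmod 3$ because there $\lambda=\zeta_3^{a+1}=1$, so the period vector would be a $1$-eigenvector of an order-$3$ element of $\slz$ and hence zero; this is indeed impossible, though you should say why $v\neq 0$ (an exact residueless $\omega=df$ would force the meromorphic function $f$, of degree $a-p$, to take a value with multiplicity $a+1$). This eigenvalue argument is correct and arguably cleaner than the angle argument, it recovers exactly the right divisibility conditions, and your uniqueness argument (uniqueness of the divisor class, placements identified by $z\mapsto -z$) adequately yields the ``one cell'' claim in each case.

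The genuine gap is the rotation-number step, which you flag yourself. The statement requires identifying the component carrying each orbifold point, and your proposed mechanism --- ``$\sigma$ preserves indices, so combine $Ind_{\sigma_{*}\gamma}=Ind_{\gamma}$ with the explicit matrix $M$ and evaluate $\gcd(Ind_{\alpha},Ind_{\beta},c)$'' --- cannot produce the claimed values as stated. The index of a simple closed curve is not determined by its homology class (homologous curves cobounding a subsurface differ by an Euler-characteristic correction), so equivariance only says that the winding-number function is constant on $\langle M\rangle$-orbits of isotopy classes; it constrains, but never computes, $Ind_{\alpha}$ and $Ind_{\beta}$ modulo $c$. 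The paper obtains the numbers $\frac{2b_1+b_2}{3}$ and $\frac{b_1+2b_2}{3}$, whose gcd with $b_1,b_2$ is $\gcd\bigl(b_1,b_2,\frac{b_1-b_2}{3}\bigr)$, by reading indices of explicit curves built from the three saddle connections of Figure~\ref{iso_3}, i.e.\ precisely from the flat picture your approach discards. If you want to stay algebraic, the clean repair is to prove (or cite) the genus-one characterization of the rotation number: it is the largest $r\mid c$ such that $\frac{1}{r}\,\mathrm{div}(\omega)$ is principal, equivalently $\omega=g^{r}\,dz$ up to scale. With your models this finishes immediately: for $p=2$ the condition on $r$ is $r\mid\gcd(b_1,b_2)$ and $3r\mid b_1-b_2$, giving $\gcd\bigl(b_1,b_2,\frac{b_1-b_2}{3}\bigr)$, and for $p=1$ it gives $a/3$ and $a/2$. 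But as written, your proof asserts these rotation numbers rather than deriving them.
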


\begin{proof}
Automorphisms of a translation surface $(X,\omega)\in \RR_1(\mu)$ have been classified in Proposition~\ref{prop:symmetries}. Moreover, by Remark~\ref{hyperremark}, every hyperelliptic translation surface is contained in a hyperelliptic component. Therefore, in a non-hyperelliptic component, the only possible automorphism is of order $3$. In contrast, in a hyperelliptic component, orbifold points of order $2$ correspond to flat surfaces with a symmetry of order $4$ (because a symmetry of order $2$ is already satisfied by any neighboring surface). To sum up, orbifold points can appear in $\PP\RR_1(\mu)$ only in two cases:
\begin{itemize}
    \item an orbifold point of order $3$ if a surface representative of the projective class has a nontrivial automorphism $\sigma$ of order $3$ (in this case we have $p=1$ or $p=2$);
    \item an orbifold point of order $2$ if the projective class belongs to a hyperelliptic component of $\PP\RR_1(\mu)$ and a surface representative has a nontrivial automorphism $\sigma$ of order $4$ (in this case we have $p=1$).
\end{itemize}

If $p=2$, the only possible automorphism $\sigma$ satisfies $\sigma^{3}=Id$ and acts nontrivially on the set of boundary saddle connections of each polar domain. It follows that the two polar domains are related to each other by exactly three saddle connections. The projective class thus belongs to a cell of 3-degenerate type (or its boundary), see Figure~\ref{iso_3}. If such cell exists, then the total angle in the polar domain of $q_1$ is equal to $\pi+2\pi b_1$. So each angle $\theta$ is equal to $\frac{\pi+2\pi b_1}{3}$. Similarly, the angle $\phi$ is equal to $\frac{\pi+2\pi b_2}{3}$. Thus, we must have $b_{1}-b_{2} \in 3\mathbb{Z}$. If $b_{1},b_{2} \in 3\mathbb{N}+1$, we get $\theta \equiv \pi \text{ mod } 2\pi$ and this is incompatible with a transitive action of $\sigma$ on the periods of boundary saddle connections. Thus, either we have $b_{1},b_{2} \in 3\mathbb{N}^{\ast}$ or $b_{1},b_{2} \in 3\mathbb{N}+2$.
\par
Note that the three symplectic bases, $\{\alpha,\beta\}, \{-\beta,\gamma\}, \{\gamma,-\alpha\}$ cannot be distinguished from each other. In this case, there exists a $\mathbb{Z}/3\mathbb{Z}$-action on the period coordinate given by isomorphism between translation surfaces. The fixed point is the translation surface depicted in the right of Figure~\ref{iso_3} and it is an orbifold point of order 3. The rotation number of this translation surface is equal to $\gcd(b_1,b_2,\frac{b_1 + 2 b_2}{3},\frac{2b_1+b_2}{3})=\gcd(b_1,b_2,\frac{b_1 - b_2}{3})$.

\begin{figure}
    \centering
    \tikzset{every picture/.style={line width=0.75pt}} %set default line width to 0.75pt        

\begin{tikzpicture}[x=0.75pt,y=0.75pt,yscale=-1,xscale=1]
%uncomment if require: \path (0,259); %set diagram left start at 0, and has height of 259

%Shape: Right Triangle [id:dp6345093787511795] 
\draw  [fill={rgb, 255:red, 200; green, 200; blue, 200 }  ,fill opacity=1 ] (278.27,205.04) -- (100.16,40.71) -- (278.27,40.71) -- cycle ;
%Straight Lines [id:da7579227741758165] 
\draw    (100.16,40.71) -- (278.27,205.04) ;
\draw   (97.53,125.18) -- (100.16,120.19) -- (102.8,125.18) ;
\draw   (275.64,124.41) -- (278.27,119.42) -- (280.91,124.41) ;
\draw   (184.01,114.46) -- (185.8,119.84) -- (180.42,118.39) ;
\draw   (189.59,202.54) -- (194.49,205.23) -- (189.59,207.92) ;
\draw   (184.32,38.21) -- (189.22,40.9) -- (184.32,43.59) ;
%Shape: Ellipse [id:dp23121139882881403] 
\draw  [fill={rgb, 255:red, 0; green, 0; blue, 0 }  ,fill opacity=1 ] (275.83,205.04) .. controls (275.83,203.66) and (276.92,202.54) .. (278.27,202.54) .. controls (279.63,202.54) and (280.72,203.66) .. (280.72,205.04) .. controls (280.72,206.42) and (279.63,207.53) .. (278.27,207.53) .. controls (276.92,207.53) and (275.83,206.42) .. (275.83,205.04) -- cycle ;
%Shape: Ellipse [id:dp946184445454517] 
\draw  [fill={rgb, 255:red, 0; green, 0; blue, 0 }  ,fill opacity=1 ] (275.83,40.71) .. controls (275.83,39.33) and (276.92,38.21) .. (278.27,38.21) .. controls (279.63,38.21) and (280.72,39.33) .. (280.72,40.71) .. controls (280.72,42.09) and (279.63,43.2) .. (278.27,43.2) .. controls (276.92,43.2) and (275.83,42.09) .. (275.83,40.71) -- cycle ;
%Shape: Ellipse [id:dp7402402705220728] 
\draw  [fill={rgb, 255:red, 0; green, 0; blue, 0 }  ,fill opacity=1 ] (97.71,40.71) .. controls (97.71,39.33) and (98.81,38.21) .. (100.16,38.21) .. controls (101.51,38.21) and (102.61,39.33) .. (102.61,40.71) .. controls (102.61,42.09) and (101.51,43.2) .. (100.16,43.2) .. controls (98.81,43.2) and (97.71,42.09) .. (97.71,40.71) -- cycle ;
%Shape: Ellipse [id:dp7312253660219605] 
\draw  [fill={rgb, 255:red, 0; green, 0; blue, 0 }  ,fill opacity=1 ] (97.71,205.04) .. controls (97.71,203.66) and (98.81,202.54) .. (100.16,202.54) .. controls (101.51,202.54) and (102.61,203.66) .. (102.61,205.04) .. controls (102.61,206.42) and (101.51,207.53) .. (100.16,207.53) .. controls (98.81,207.53) and (97.71,206.42) .. (97.71,205.04) -- cycle ;
%Straight Lines [id:da815449277500603] 
\draw [fill={rgb, 255:red, 0; green, 0; blue, 0 }  ,fill opacity=1 ]   (278.27,40.71) -- (278.27,205.04) ;
%Straight Lines [id:da9587421545097203] 
\draw    (100.16,40.71) -- (100.16,205.04) ;
%Straight Lines [id:da21946908422678924] 
\draw    (100.16,205.04) -- (278.27,205.04) ;
%Curve Lines [id:da5280606707389848] 
\draw    (119.97,205.89) .. controls (123.97,193.89) and (112.47,183.89) .. (100.47,186.89) ;
%Curve Lines [id:da027733121971705188] 
\draw    (99.97,65.89) .. controls (112.97,67.39) and (113.67,64.91) .. (116.17,55.41) ;
%Curve Lines [id:da8903474883214073] 
\draw    (250.17,204.91) .. controls (249.67,196.41) and (252.67,191.41) .. (260.17,188.91) ;
%Shape: Right Triangle [id:dp21305699462790928] 
\draw   (536.94,206.24) -- (358.83,41.91) -- (536.94,41.91) -- cycle ;
%Straight Lines [id:da11946703914868428] 
\draw    (358.83,41.91) -- (536.94,206.24) ;
\draw   (356.19,126.38) -- (358.83,121.39) -- (361.46,126.38) ;
\draw   (534.31,125.61) -- (536.94,120.62) -- (539.58,125.61) ;
\draw   (442.68,115.66) -- (444.46,121.04) -- (439.08,119.59) ;
\draw   (448.26,203.74) -- (453.16,206.43) -- (448.26,209.12) ;
\draw   (442.99,39.41) -- (447.88,42.1) -- (442.99,44.79) ;
%Shape: Ellipse [id:dp6222573733651473] 
\draw  [fill={rgb, 255:red, 0; green, 0; blue, 0 }  ,fill opacity=1 ] (534.49,206.24) .. controls (534.49,204.86) and (535.59,203.74) .. (536.94,203.74) .. controls (538.29,203.74) and (539.39,204.86) .. (539.39,206.24) .. controls (539.39,207.62) and (538.29,208.74) .. (536.94,208.74) .. controls (535.59,208.74) and (534.49,207.62) .. (534.49,206.24) -- cycle ;
%Shape: Ellipse [id:dp08817637234435227] 
\draw  [fill={rgb, 255:red, 0; green, 0; blue, 0 }  ,fill opacity=1 ] (534.49,41.91) .. controls (534.49,40.53) and (535.59,39.41) .. (536.94,39.41) .. controls (538.29,39.41) and (539.39,40.53) .. (539.39,41.91) .. controls (539.39,43.29) and (538.29,44.4) .. (536.94,44.4) .. controls (535.59,44.4) and (534.49,43.29) .. (534.49,41.91) -- cycle ;
%Shape: Ellipse [id:dp833136720857155] 
\draw  [fill={rgb, 255:red, 0; green, 0; blue, 0 }  ,fill opacity=1 ] (356.38,41.91) .. controls (356.38,40.53) and (357.48,39.41) .. (358.83,39.41) .. controls (360.18,39.41) and (361.28,40.53) .. (361.28,41.91) .. controls (361.28,43.29) and (360.18,44.4) .. (358.83,44.4) .. controls (357.48,44.4) and (356.38,43.29) .. (356.38,41.91) -- cycle ;
%Shape: Ellipse [id:dp22851466836668943] 
\draw  [fill={rgb, 255:red, 0; green, 0; blue, 0 }  ,fill opacity=1 ] (356.38,206.24) .. controls (356.38,204.86) and (357.48,203.74) .. (358.83,203.74) .. controls (360.18,203.74) and (361.28,204.86) .. (361.28,206.24) .. controls (361.28,207.62) and (360.18,208.74) .. (358.83,208.74) .. controls (357.48,208.74) and (356.38,207.62) .. (356.38,206.24) -- cycle ;
%Straight Lines [id:da8155579864969882] 
\draw [fill={rgb, 255:red, 0; green, 0; blue, 0 }  ,fill opacity=1 ]   (536.94,41.91) -- (536.94,206.24) ;
%Straight Lines [id:da43696993841639586] 
\draw    (358.83,41.91) -- (358.83,206.24) ;
%Straight Lines [id:da8909110654106658] 
\draw    (358.83,206.24) -- (536.94,206.24) ;
%Curve Lines [id:da8052309902769739] 
\draw    (378.63,207.09) .. controls (382.63,195.09) and (371.13,185.09) .. (359.13,188.09) ;
%Curve Lines [id:da8159085488556881] 
\draw    (358.63,67.09) .. controls (369.33,64.55) and (369.33,63.88) .. (374.83,56.61) ;
%Curve Lines [id:da010070337184577838] 
\draw    (508.83,206.11) .. controls (508.33,197.61) and (511.33,192.61) .. (518.83,190.11) ;
%Curve Lines [id:da7651848875390839] 
\draw    (374.83,56.61) .. controls (383.33,52.55) and (383.33,48.55) .. (385.33,41.21) ;
%Curve Lines [id:da41180999954816677] 
\draw    (518.67,42.55) .. controls (522,51.88) and (524.67,57.21) .. (537.33,57.21) ;
%Curve Lines [id:da9823314333135509] 
\draw    (518.27,189.08) .. controls (522.67,183.88) and (528,181.21) .. (538,183.21) ;
%Shape: Ellipse [id:dp5414780371461683] 
\draw   (144.11,154.72) .. controls (144.11,153.65) and (144.96,152.78) .. (146.01,152.78) .. controls (147.06,152.78) and (147.91,153.65) .. (147.91,154.72) .. controls (147.91,155.8) and (147.06,156.67) .. (146.01,156.67) .. controls (144.96,156.67) and (144.11,155.8) .. (144.11,154.72) -- cycle ;
%Shape: Ellipse [id:dp9943121518795643] 
\draw   (406.77,161.39) .. controls (406.77,160.32) and (407.62,159.45) .. (408.67,159.45) .. controls (409.72,159.45) and (410.57,160.32) .. (410.57,161.39) .. controls (410.57,162.46) and (409.72,163.33) .. (408.67,163.33) .. controls (407.62,163.33) and (406.77,162.46) .. (406.77,161.39) -- cycle ;
%Shape: Ellipse [id:dp32325693073133865] 
\draw   (483.44,99.39) .. controls (483.44,98.32) and (484.29,97.45) .. (485.34,97.45) .. controls (486.39,97.45) and (487.24,98.32) .. (487.24,99.39) .. controls (487.24,100.46) and (486.39,101.33) .. (485.34,101.33) .. controls (484.29,101.33) and (483.44,100.46) .. (483.44,99.39) -- cycle ;

% Text Node
\draw (287.88,202.2) node    {$z$};
% Text Node
\draw (129.03,179.51) node    {$\theta $};
% Text Node
\draw (119.03,78.51) node    {$\theta $};
% Text Node
\draw (237.53,191.51) node    {$\theta $};
% Text Node
\draw (189.15,219.41) node    {$\alpha $};
% Text Node
\draw (87.81,124.75) node    {$\beta $};
% Text Node
\draw (173.15,130.75) node    {$\gamma $};
% Text Node
\draw (546.55,203.4) node    {$z$};
% Text Node
\draw (387.7,180.71) node    {$\theta $};
% Text Node
\draw (375.7,79.05) node    {$\theta $};
% Text Node
\draw (496.2,192.71) node    {$\theta $};
% Text Node
\draw (447.81,220.61) node    {$\alpha $};
% Text Node
\draw (346.48,125.95) node    {$\beta $};
% Text Node
\draw (433.81,130.95) node    {$\gamma $};
% Text Node
\draw (399.7,55.05) node    {$\phi $};
% Text Node
\draw (514.36,61.71) node    {$\phi $};
% Text Node
\draw (522.36,168.38) node    {$\phi $};
% Text Node
\draw (148.99,146.9) node [anchor=north west][inner sep=0.75pt]  [font=\scriptsize]  {$q_{1}$};
% Text Node
\draw (411.65,153.56) node [anchor=north west][inner sep=0.75pt]  [font=\scriptsize]  {$q_{1}$};
% Text Node
\draw (488.32,91.56) node [anchor=north west][inner sep=0.75pt]  [font=\scriptsize]  {$q_{2}$};

\end{tikzpicture} 
    \caption{The translation surface corresponding to an orbifold point of order 3 in a cell of 1-triangle / 3-degenerate type} \label{iso_3}
\end{figure}

If a translation surface $(X,\omega)$ satisfying $p=1$ has an automorphism $\sigma$ of order 3, then the unique polar domain is bounded by three saddle connections related to a triangle. $(X,\omega)$ belongs to a cell of 1-triangle type (see the left of Figure~\ref{iso_3}).  If such cell exists, then the angle $\theta$ satisfies $\theta\equiv \frac{\pi}{3}\text{ mod } 2\pi$. So the order $a$ of the pole satisfies $3~\vert~a$. Similarly to the case in the previous paragraph, we have $\mathbb{Z}/3\mathbb{Z}$-action on the period coordinate. The fixed point is the translation surface depicted in the left of Figure~\ref{iso_3} and it is an orbifold point of order 3. The rotation number of this translation surface is equal to $\gcd(a,\frac{a}{3})=\frac{a}{3}$.

In the last case, $(X,\omega)$ has an automorphism $\sigma$ of order 4 and belongs to a cell of 4-degenerate type (see the translation surface depicted in Figure~\ref{iso_2}). This translation surface is contained in the hyperelliptic component. If such cell exists, then the total angle of the polar domain is equal to $4\theta=2\pi(a+1)$. So $\theta=\frac{a+1}{2}\pi$. If $a$ is odd, then $\theta$ is a multiple of $\pi$, which is impossible. Thus $a$ is even. Note that the four symplectic basis, $\{\alpha,\beta\}, \{-\beta,\alpha\},\{-\alpha,-\beta\},\{-\beta,-\alpha\}$ cannot be distinguished from each other. In this case, there exists a $\mathbb{Z}/4\mathbb{Z}$-action on the period coordinate given by isomorphism between translation surfaces. The fixed point is the translation surface depicted in Figure~\ref{iso_2}. The hyperelliptic involution corresponds to the subgroup of order 2, and the hyperelliptic involution does not change the projectivized coordinate. Therefore this is an orbifold point of order 2.  

\begin{figure}
    \centering
    \tikzset{every picture/.style={line width=0.75pt}} %set default line width to 0.75pt        

\begin{tikzpicture}[x=0.75pt,y=0.75pt,yscale=-1,xscale=1]
%uncomment if require: \path (0,257); %set diagram left start at 0, and has height of 257

%Shape: Rectangle [id:dp29783347657335435] 
\draw   (77.23,41.56) -- (256.96,41.56) -- (256.96,207.73) -- (77.23,207.73) -- cycle ;
\draw   (74.57,126.97) -- (77.23,121.93) -- (79.89,126.97) ;
\draw   (254.3,126.2) -- (256.96,121.15) -- (259.62,126.2) ;
%Shape: Ellipse [id:dp9609986388747607] 
\draw  [fill={rgb, 255:red, 0; green, 0; blue, 0 }  ,fill opacity=1 ] (254.49,207.73) .. controls (254.49,206.34) and (255.59,205.21) .. (256.96,205.21) .. controls (258.32,205.21) and (259.43,206.34) .. (259.43,207.73) .. controls (259.43,209.12) and (258.32,210.25) .. (256.96,210.25) .. controls (255.59,210.25) and (254.49,209.12) .. (254.49,207.73) -- cycle ;
%Shape: Ellipse [id:dp49479370018035906] 
\draw  [fill={rgb, 255:red, 0; green, 0; blue, 0 }  ,fill opacity=1 ] (254.49,41.56) .. controls (254.49,40.16) and (255.59,39.03) .. (256.96,39.03) .. controls (258.32,39.03) and (259.43,40.16) .. (259.43,41.56) .. controls (259.43,42.95) and (258.32,44.08) .. (256.96,44.08) .. controls (255.59,44.08) and (254.49,42.95) .. (254.49,41.56) -- cycle ;
%Shape: Ellipse [id:dp34628365190385035] 
\draw  [fill={rgb, 255:red, 0; green, 0; blue, 0 }  ,fill opacity=1 ] (74.76,41.56) .. controls (74.76,40.16) and (75.87,39.03) .. (77.23,39.03) .. controls (78.6,39.03) and (79.7,40.16) .. (79.7,41.56) .. controls (79.7,42.95) and (78.6,44.08) .. (77.23,44.08) .. controls (75.87,44.08) and (74.76,42.95) .. (74.76,41.56) -- cycle ;
%Shape: Ellipse [id:dp5575527738384605] 
\draw  [fill={rgb, 255:red, 0; green, 0; blue, 0 }  ,fill opacity=1 ] (74.76,207.73) .. controls (74.76,206.34) and (75.87,205.21) .. (77.23,205.21) .. controls (78.6,205.21) and (79.7,206.34) .. (79.7,207.73) .. controls (79.7,209.12) and (78.6,210.25) .. (77.23,210.25) .. controls (75.87,210.25) and (74.76,209.12) .. (74.76,207.73) -- cycle ;
\draw   (164.33,38.25) -- (168.43,41.11) -- (164.33,43.97)(168.33,38.25) -- (172.43,41.11) -- (168.33,43.97) ;
\draw   (166.33,205.25) -- (170.43,208.11) -- (166.33,210.97)(170.33,205.25) -- (174.43,208.11) -- (170.33,210.97) ;
%Curve Lines [id:da6726813399904972] 
\draw    (96.63,207.09) .. controls (100.63,195.09) and (89.13,185.09) .. (77.13,188.09) ;
%Curve Lines [id:da11414016590846043] 
\draw    (77.63,62.09) .. controls (90.63,63.59) and (99.63,53.09) .. (95.63,41.59) ;
%Curve Lines [id:da8927312295541172] 
\draw    (235.63,41.59) .. controls (231.63,56.59) and (242.13,66.59) .. (256.63,61.59) ;
%Curve Lines [id:da3387825112009546] 
\draw    (257.13,182.09) .. controls (241.63,181.09) and (229.63,192.09) .. (233.63,207.59) ;

% Text Node
\draw (265.64,203.85) node    {$z$};
% Text Node
\draw (105.7,68.21) node    {$\theta $};
% Text Node
\draw (222.7,177.21) node    {$\theta $};
% Text Node
\draw (225.2,68.21) node    {$\theta $};
% Text Node
\draw (106.7,179.71) node    {$\theta $};
% Text Node
\draw (177.15,225.15) node    {$\alpha $};
% Text Node
\draw (63.81,123.81) node    {$\beta $};

\end{tikzpicture} 
    \caption{The translation surface corresponding to an orbifold point of order 2 in a cell of 4-degenerate type} \label{iso_2}
\end{figure}
\end{proof}

The number of boundary arcs of a cell depends on its type and its symmetries.

\begin{prop} \label{adjacentarcs}
Exceptionally symmetric cells are bounded by a unique equatorial arc. For any other cell, the number of boundary arcs depends on its type:
\begin{itemize}
    \item A cell of cylinder type is bounded by one equatorial arc;
    \item A cell of 4-degenerate type is bounded by two arcs;
    \item A cell of quadrilateral type is bounded by four arcs;
    \item A cell of 3-degenerate, 1-triangle, or 2-triangle types is bounded by three arcs.
\end{itemize}
\end{prop}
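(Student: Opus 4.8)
The plan is to exploit the fact, established in Section~\ref{sub:ExcSymCel}, that each cell is a single $\glplus$-orbit and is therefore parametrized by a hemisphere of $\mathbb{CP}^1$ through the period coordinate $\left[\int_\alpha\omega:\int_\beta\omega\right]$ attached to a symplectic basis $\{\alpha,\beta\}$ of saddle connections adapted to the cell. For a cell with no extra symmetry the period map is injective and the cell is an open hemisphere; for an exceptionally symmetric cell (classified in Proposition~\ref{prop:ExcSymCel}) one quotients the hemisphere by the cyclic group of order $2$, $3$ or $4$ coming from the change of basis. In all cases the boundary of the cell inside $\PP\RR_1(\mu)$ lies over the equator $\mathbb{RP}^1$, and the boundary arcs are exactly the arcs into which this circle is cut by the vertices of the equatorial net sitting on it. Hence it suffices to count those vertices.

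Next I would characterize the vertices. A point of the boundary circle represents a differential whose absolute periods are real collinear; after rotating to make them horizontal, the flattened interior of the core has all of its bounding saddle connections horizontal. Along a fixed boundary arc the combinatorial type of this degenerate configuration is constant, and it can only change when one of the saddle connections bounding the interior of the core acquires zero length, i.e. when its period passes through $0$. Since the period of any saddle connection is a fixed integral combination $m\int_\alpha\omega+n\int_\beta\omega$, its vanishing locus on $\mathbb{RP}^1$ is the single point $\int_\beta\omega/\int_\alpha\omega=-m/n$. Therefore the number of boundary arcs equals the number of saddle connections bounding the interior of the core, counted up to the homological identifications on the torus (and up to the symmetry group in the exceptional case).

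It then remains to read these counts off the explicit flat pictures of Figure~\ref{types}, using the classification of cores in Corollary~\ref{cor:core}. A $4$-degenerate cell has a single polar domain with four boundary edges identified in pairs into two saddle connections, giving two arcs; a $3$-degenerate cell and a $1$-triangle cell have their degenerate core, resp.\ triangular core, bounded by three saddle connections, giving three arcs; a quadrilateral cell is bounded by the four edges of its quadrilateral core, giving four arcs; and the two triangles of a $2$-triangle cell are identified on the torus so that only three distinct bounding saddle connections remain, giving three arcs. The cylinder case is already settled in Section~\ref{sub:ExcSymCel}: the cell is $\mathcal{H}/\mathbb{Z}$ and its boundary is the single loop $\mathbb{R}/\mathbb{Z}$, that is, one equatorial arc, which matches the fact that the two core curves bounding the cylinder are homologous and contribute a single saddle-connection class. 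Finally, for the three families of exceptionally symmetric cells the order-$3$ or order-$4$ automorphism permutes the bounding saddle connections transitively, hence permutes their vertices transitively; passing to the quotient collapses them to a single vertex and a single arc, establishing the first assertion.

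The main obstacle is the middle step: justifying that the vertices on the boundary of a cell are \emph{exactly} the degenerations of the saddle connections bounding the core, neither more nor fewer. One must check, type by type from the flat models, that shrinking a saddle connection interior to a polar domain or crossing the core does not produce a genuine combinatorial transition of the core (so that no spurious vertices appear), and conversely that the relevant bounding edges vanish at \emph{distinct} points of $\mathbb{RP}^1$ (so that no accidental coincidence reduces the count). Correctly bookkeeping the homological identifications on the torus --- which is what distinguishes the $2$-triangle count $3$ from a naive $6$, and the cylinder count $1$ from a naive $2$ --- together with the quotient by the symmetry group in the exceptional cases, is where the care is required.
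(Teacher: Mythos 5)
Your overall strategy is genuinely different from the paper's: you want to count the vertices of the equatorial net lying on the boundary circle of the hemisphere parametrizing a cell, and take the arcs to be the complementary intervals, whereas the paper identifies arcs with symplectic bases made of saddle connections, counted up to homology and up to the self-identifications of the cell coming from Section~\ref{sub:ExcSymCel} and Proposition~\ref{prop:ExcSymCel}. The vertex-counting route can be made rigorous, and your treatment of the exceptionally symmetric cells (the symmetry permutes the vertices transitively, so the quotient has a single arc) is fine. But your key counting principle --- that the vertices are exactly the vanishing points of the saddle connections \emph{bounding the interior of the core}, counted ``up to the homological identifications'' --- is wrong, and it breaks precisely at the step you yourself flagged as the main obstacle.

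The failure is concrete in the quadrilateral type. The four sides of the core quadrilateral fall into only \emph{two} homology classes: opposite sides bound a chain of two-sided polar domains, and two saddle connections bounding a common polar domain are homologous (the boundary of that disk is null-homologous), hence have equal periods and vanish at the \emph{same} point of the equator. So your rule, applied as stated, yields two boundary arcs for a quadrilateral cell, contradicting the claimed four; your count of ``four edges, hence four arcs'' gets the right number only because you silently dropped your own up-to-homology clause for this one type (the same clause you must invoke to turn the six triangle sides into three for the $2$-triangle type, and the $p+1$ core edges into three for the $3$-degenerate type). The two missing vertices come from the two \emph{diagonal} saddle connections of the quadrilateral, which lie in the interior of the core and are excluded by your rule: if the side classes have periods $x$ and $y$, the diagonals have periods $x+y$ and $x-y$ and vanish at the two further points $y/x=\mp 1$. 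The correct principle is that vertices correspond to homology classes, up to sign and up to the deck group of the cell, of \emph{all} saddle connections present, bounding or not. The cylinder case shows the same confusion: its unique vertex is the common image, under the $\mathbb{Z}$-action, of the vanishing points of the \emph{cross-curve} saddle connections, while the core-curve class never vanishes on the boundary circle at all (its degeneration is the cusp, i.e.\ the puncture of $\mathcal{H}/\mathbb{Z}$), so the homology of the two core curves is irrelevant, and the boundary loop is one arc \emph{plus} one vertex rather than a single closed arc. Until the vertex set is characterized correctly in this sense, none of the type-by-type counts is actually established.
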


\begin{proof}
Recall that the equatorial arcs are the loci where two saddle connections forming a symplectic basis have $\mathbb{R}$-colinear periods. Thus the number of equatorial arcs adjacent to a 2-cell is equal to the number of the symplectic basis consisting of saddle connections, counted up to homology. 

If $(X,\omega)$ is contained in a cell of cylinder type, then there are infinitely many homology classes represented by saddle connections. However, any symplectic basis consists of a core curve of the cylinder and a cross curve of the cylinder. By description of the isomorphism with the translation surfaces in this cell given in Section~\ref{sub:ExcSymCel}, all symplectic bases are identified up to these isomorphisms. So there is only one equatorial arc adjacent to a cell of cylinder type. 

For the other types of cells, there are only finitely many saddle connections. The number of distinct symplectic bases are as listed above in the proposition, unless the cell has extra symmetry. This extra symmetry of the cell has a fixed point which is an orbifold point (see Proposition~\ref{prop:ExcSymCel}). In all of these cases the number of boundary saddle connections reduces to one because of the symmetry.
\end{proof}

\section{Multi-scale compactification}\label{sec:Multiscale}

A projectivized residueless stratum $\PP\RR_1 (\mu)$ has a smooth compactification called the {\em moduli space of multi-scale differentials}, which we will denote $\PP\overline{\RR}_1 (\mu)$. It is a one-dimensional compact complex orbifold. The projective structure gives a cellular decomposition of $\PP\overline{\RR}_1 (\mu)$, which will allow us to investigate the topological structure of the stratum. 

In this section, we will refer to the properties of the multi-scale compactification $\PP\overline{\RR}_1(\mu)$. First, we recall the following lemma (proved as Lemma~6.4 in \cite{Lee}) that describes all boundary points of the multi-scale compactification $\PP\overline{\RR}_1 (\mu)$. 

\begin{lem}\label{genus1lemma}
A multi-scale differential $\overline{X} \in \partial\overline{\RR}_1 (\mu)$ is determined by the following combinatorial data:

\begin{itemize}
    \item An integer $0\leq t\leq p$. 
    \item A permutation $\tau\in S_{p}$.
    \item A tuple of integers ${\bf C}=(C_1,\dots,C_p)$ such that $1\leq C_i \leq b_i -1$ for each $i$. Denote $Q_1\coloneqq \sum_{i=1}^t C_{\tau(i)}$ and $Q_2\coloneqq \sum_{i=1}^t (b_{\tau(i)}-C_{\tau(i)})$.
    \item If $t>0$, we also need a prong-matching equivalence class $[(u,v)]$ represented by a prong-matching $(u,v)\in \mathbb{Z}/Q_1\mathbb{Z} \times \mathbb{Z}/Q_2 \mathbb{Z}$. 
\end{itemize}
\end{lem}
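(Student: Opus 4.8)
The plan is to read the boundary points off the general structure theory of the multi-scale compactification \cite{BCGGM}, exploiting that $\PP\RR_1(\mu)$ is one-dimensional. A boundary point is an equivalence class of multi-scale differentials on a stable curve of arithmetic genus one: an enhanced level graph, a meromorphic differential of type $\mu$ spread over the level-components, a prong-matching at each vertical node, all taken modulo automorphisms and the level-rotation action. Since the period coordinates make $\PP\RR_1(\mu)$ a one-dimensional orbifold, every boundary stratum is zero-dimensional; and the codimension of a boundary stratum of the multi-scale space equals the number of horizontal nodes plus the number of levels strictly below the top. Thus a boundary point falls into exactly one of two cases: a single level with exactly one horizontal node, or two levels (one below the top) with no horizontal node. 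These will correspond to $t=0$ and $t\geq 1$ respectively.

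First I would pin down the underlying stable curves. Being a zero-dimensional stratum, a boundary point has only rigid components, so every irreducible component is rational (a positive-genus component would vary in its own moduli). On each rational component the order balance reads $\sum(\text{zero orders})-\sum(\text{pole orders})=-2$. A higher-level component meets its nodes at \emph{zeros}, so it can absorb poles; a lower-level component meets its nodes at \emph{poles}, hence needs genuine zeros of $\mu$ to balance. As $\mu$ has the single zero of order $a$, exactly one component can sit at the bottom: a second bottom component would meet all its nodes at poles and carry no zero, so could not be balanced. The unique cycle of the dual graph is then forced to be a bigon joining this bottom component to one higher component --- the ``banana''. This gives, for $t\geq 1$, two rational components: the higher one $X_+$ carrying $t$ of the poles and two node-zeros of orders $Q_1-1,Q_2-1$, and the lower one $X_-$ carrying the zero of order $a$, the other $p-t$ poles, and two node-poles of orders $Q_1+1,Q_2+1$; here the balance forces $Q_1+Q_2=\sum_{i=1}^t b_{\tau(i)}$. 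For $t=0$ the curve is instead an irreducible rational curve with a single (horizontal) self-node, carrying two simple poles of opposite nonzero residue --- the limit already identified with the cusp of a cylinder-type cell in Section~\ref{sub:ExcSymCel}.

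Next I would extract the discrete data. The permutation $\tau$ records which poles sit on the higher component ($\tau(1),\dots,\tau(t)$) and the cyclic orders of the poles on the two components. Because all poles are residueless, $X_+$ is a projectivized genus-zero residueless stratum (two zeros, $t$ poles), while $X_-$ is a genus-zero generalized stratum whose two node-poles carry opposite residues by the global residue condition. A dimension count using period coordinates shows each is zero-dimensional after quotienting by its scaling, hence a finite set; the combinatorial type of each configuration is recorded by the integer $C_i$, which prescribes how the order of the pole $b_i$ splits toward the two nodes. The bound $1\le C_i\le b_i-1$ is exactly the requirement that both resulting node-singularities have strictly positive order, and summing over the higher component gives $Q_1=\sum_{i=1}^t C_{\tau(i)}$ and $Q_2=\sum_{i=1}^t(b_{\tau(i)}-C_{\tau(i)})$.

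Finally, the prong-matching. Each vertical node has a zero of order $Q_j-1$ facing a pole of order $Q_j+1$, hence $Q_j$ prongs on either side, so a prong-matching at the two nodes is an element of $\mathbb{Z}/Q_1\mathbb{Z}\times\mathbb{Z}/Q_2\mathbb{Z}$; the level-rotation group acting on the single lower level acts on these, and the boundary point depends only on the resulting class $[(u,v)]$. I expect the crux to be precisely this step: verifying that the two level-components are individually rigid up to scale (so that the boundary stratum is genuinely zero-dimensional and enumerated by $\tau$ and $\mathbf{C}$), and then identifying the set of prong-matchings with $\mathbb{Z}/Q_1\mathbb{Z}\times\mathbb{Z}/Q_2\mathbb{Z}$ modulo the twist action so that the tuple $(t,\tau,\mathbf{C},[(u,v)])$ both determines and is realized by a boundary point. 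The surrounding bookkeeping --- that $t=0$ needs no prong-matching, and that relabelling symmetries of $\tau$ account for the only coincidences --- then follows from the smooth, normal-crossings structure of $\PP\overline{\RR}_1(\mu)$.
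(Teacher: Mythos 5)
Your structural frame is the right one---and it is essentially the only one available, since the paper does not prove Lemma~\ref{genus1lemma} in the text but quotes it from Lemma~6.4 of \cite{Lee} and then only describes the resulting construction. The reduction you take from \cite{BCGGM} is correct: the codimension of a boundary stratum equals the number of horizontal nodes plus the number of levels below the top, so in a one-dimensional space the only possibilities are one horizontal node ($t=0$) or two levels joined by vertical nodes ($t\geq 1$); positive-genus components are excluded because their absolute periods would give positive projectivized dimension; the unique zero forces a unique bottom component and the cycle of the dual graph to be a bigon; the node orders $Q_1-1,\,Q_2-1$ against $Q_1+1,\,Q_2+1$ give $Q_j$ prongs at each node, so prong-matchings form a set identified with $\mathbb{Z}/Q_1\mathbb{Z}\times\mathbb{Z}/Q_2\mathbb{Z}$ on which the level rotation acts, and only the class $[(u,v)]$ matters.

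The genuine gap is the step you yourself single out as the crux and then leave as an expectation: passing from ``each level stratum is rigid up to scale, hence finite'' to ``its elements are enumerated by $\tau$ and $\mathbf{C}$ with $1\le C_i\le b_i-1$''. Rigidity does follow from your dimension count, but rigidity alone says nothing about what the configurations are or how many there are, and that enumeration is the actual content of the lemma. What is missing is the flat-geometric classification of the rigid genus-zero components: on the bottom component every saddle connection is a closed loop at the unique zero, so its period is $2\pi i$ times a sum of residues, hence a nonzero integer multiple of $2\pi i r$, where $\pm r$ are the residues at the two node-poles; on the top component all residues vanish, so closed saddle connections are impossible and all saddle connections join the two node-zeros with equal period. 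In both cases all saddle connections are parallel, and one then needs the genus-zero analogue of Lemma~\ref{lem:coreformula} to show that each residueless polar domain is bounded by exactly two saddle connections, that these domains are arranged in a chain (bottom) respectively in a cyclic order at the nodes (top)---which is what $\tau$ records---and that the two boundary angles of the polar domain of $q_i$ are $2\pi C_i$ and $2\pi(b_i-C_i)$, which is exactly where the bounds $1\le C_i\le b_i-1$ come from. None of this is delivered by the dimension count or by the normal-crossings structure you invoke at the end. Two smaller defects: your degree-balance argument excludes only extra \emph{bottom} components, whereas a top component attached by a single edge (carrying several poles and one node-zero) is degree-balanced and must instead be excluded by the dimension count or by the non-existence of genus-zero residueless differentials with a single zero (all of its saddle connections would have period zero); and the coincidences among tuples $(t,\tau,\mathbf{C},[(u,v)])$ are not a consequence of normal crossings but of relabeling the parallel saddle connections and the two nodes, as worked out in Remark~\ref{multiplecount}.
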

We denote this multi-scale differential by $\overline{X}\coloneqq X(t,\tau,{\bf C},[(u,v)])$. Here we explain how this data construct a multi-scale differential. Let $q_i$ be the pole of order $b_i$.

\begin{figure}
    \centering
    \tikzset{every picture/.style={line width=0.75pt}} %set default line width to 0.75pt        

\begin{tikzpicture}[x=0.75pt,y=0.75pt,yscale=-1,xscale=1]
%uncomment if require: \path (0,303); %set diagram left start at 0, and has height of 303

%Curve Lines [id:da7516784301275663] 
\draw    (65.22,149.1) .. controls (114.22,43.1) and (233.22,43.1) .. (273.22,150.1) ;
%Curve Lines [id:da38311741175357406] 
\draw    (64.22,85.1) .. controls (140.22,219.1) and (209.22,219.1) .. (272.22,86.1) ;
%Shape: Circle [id:dp9190398444505092] 
\draw  [fill={rgb, 255:red, 255; green, 255; blue, 255 }  ,fill opacity=1 ] (169.8,185.46) .. controls (169.8,184.17) and (170.85,183.12) .. (172.14,183.12) .. controls (173.43,183.12) and (174.48,184.17) .. (174.48,185.46) .. controls (174.48,186.75) and (173.43,187.8) .. (172.14,187.8) .. controls (170.85,187.8) and (169.8,186.75) .. (169.8,185.46) -- cycle ;
%Shape: Circle [id:dp426734044345795] 
\draw  [fill={rgb, 255:red, 0; green, 0; blue, 0 }  ,fill opacity=1 ] (135.4,75.66) .. controls (135.4,74.37) and (136.45,73.32) .. (137.74,73.32) .. controls (139.03,73.32) and (140.08,74.37) .. (140.08,75.66) .. controls (140.08,76.95) and (139.03,78) .. (137.74,78) .. controls (136.45,78) and (135.4,76.95) .. (135.4,75.66) -- cycle ;
%Shape: Circle [id:dp6323575560102308] 
\draw  [fill={rgb, 255:red, 0; green, 0; blue, 0 }  ,fill opacity=1 ] (82,117.66) .. controls (82,116.37) and (83.05,115.32) .. (84.34,115.32) .. controls (85.63,115.32) and (86.68,116.37) .. (86.68,117.66) .. controls (86.68,118.95) and (85.63,120) .. (84.34,120) .. controls (83.05,120) and (82,118.95) .. (82,117.66) -- cycle ;
%Shape: Circle [id:dp9136872828983322] 
\draw  [fill={rgb, 255:red, 0; green, 0; blue, 0 }  ,fill opacity=1 ] (253.4,116.66) .. controls (253.4,115.37) and (254.45,114.32) .. (255.74,114.32) .. controls (257.03,114.32) and (258.08,115.37) .. (258.08,116.66) .. controls (258.08,117.95) and (257.03,119) .. (255.74,119) .. controls (254.45,119) and (253.4,117.95) .. (253.4,116.66) -- cycle ;
%Shape: Circle [id:dp933238423522188] 
\draw  [fill={rgb, 255:red, 0; green, 0; blue, 0 }  ,fill opacity=1 ] (118.2,161.46) .. controls (118.2,160.17) and (119.25,159.12) .. (120.54,159.12) .. controls (121.83,159.12) and (122.88,160.17) .. (122.88,161.46) .. controls (122.88,162.75) and (121.83,163.8) .. (120.54,163.8) .. controls (119.25,163.8) and (118.2,162.75) .. (118.2,161.46) -- cycle ;
%Shape: Circle [id:dp4653328027233474] 
\draw  [fill={rgb, 255:red, 0; green, 0; blue, 0 }  ,fill opacity=1 ] (218.2,164.26) .. controls (218.2,162.97) and (219.25,161.92) .. (220.54,161.92) .. controls (221.83,161.92) and (222.88,162.97) .. (222.88,164.26) .. controls (222.88,165.55) and (221.83,166.6) .. (220.54,166.6) .. controls (219.25,166.6) and (218.2,165.55) .. (218.2,164.26) -- cycle ;
%Shape: Circle [id:dp6377498119070784] 
\draw  [fill={rgb, 255:red, 0; green, 0; blue, 0 }  ,fill opacity=1 ] (206,77.06) .. controls (206,75.77) and (207.05,74.72) .. (208.34,74.72) .. controls (209.63,74.72) and (210.68,75.77) .. (210.68,77.06) .. controls (210.68,78.35) and (209.63,79.4) .. (208.34,79.4) .. controls (207.05,79.4) and (206,78.35) .. (206,77.06) -- cycle ;
%Curve Lines [id:da682227334878696] 
\draw    (441.16,96.92) .. controls (426.36,122.92) and (382.72,143.88) .. (384.54,121.6) .. controls (386.36,99.32) and (573.03,167.28) .. (589.96,178.92) ;
%Shape: Circle [id:dp6066461894626243] 
\draw  [fill={rgb, 255:red, 0; green, 0; blue, 0 }  ,fill opacity=1 ] (416.6,120.06) .. controls (416.6,118.77) and (417.65,117.72) .. (418.94,117.72) .. controls (420.23,117.72) and (421.28,118.77) .. (421.28,120.06) .. controls (421.28,121.35) and (420.23,122.4) .. (418.94,122.4) .. controls (417.65,122.4) and (416.6,121.35) .. (416.6,120.06) -- cycle ;
%Shape: Circle [id:dp9042247011275222] 
\draw  [fill={rgb, 255:red, 255; green, 255; blue, 255 }  ,fill opacity=1 ] (448.2,128.26) .. controls (448.2,126.97) and (449.25,125.92) .. (450.54,125.92) .. controls (451.83,125.92) and (452.88,126.97) .. (452.88,128.26) .. controls (452.88,129.55) and (451.83,130.6) .. (450.54,130.6) .. controls (449.25,130.6) and (448.2,129.55) .. (448.2,128.26) -- cycle ;
%Shape: Circle [id:dp8402827581567816] 
\draw  [fill={rgb, 255:red, 0; green, 0; blue, 0 }  ,fill opacity=1 ] (488.2,141.06) .. controls (488.2,139.77) and (489.25,138.72) .. (490.54,138.72) .. controls (491.83,138.72) and (492.88,139.77) .. (492.88,141.06) .. controls (492.88,142.35) and (491.83,143.4) .. (490.54,143.4) .. controls (489.25,143.4) and (488.2,142.35) .. (488.2,141.06) -- cycle ;
%Shape: Circle [id:dp3431365765830523] 
\draw  [fill={rgb, 255:red, 0; green, 0; blue, 0 }  ,fill opacity=1 ] (544.2,160.06) .. controls (544.2,158.77) and (545.25,157.72) .. (546.54,157.72) .. controls (547.83,157.72) and (548.88,158.77) .. (548.88,160.06) .. controls (548.88,161.35) and (547.83,162.4) .. (546.54,162.4) .. controls (545.25,162.4) and (544.2,161.35) .. (544.2,160.06) -- cycle ;

% Text Node
\draw (68.36,116.24) node    {$s_{1}$};
% Text Node
\draw (271.36,119.24) node    {$s_{2}$};
% Text Node
\draw (171.36,199.84) node    {$z$};
% Text Node
\draw (140.16,56.84) node    {$q_{\tau ( 1)}$};
% Text Node
\draw (205.36,57.04) node    {$q_{\tau ( t)}$};
% Text Node
\draw (143.16,142.24) node    {$q_{\tau ( t+1)}$};
% Text Node
\draw (211.16,141.64) node    {$q_{\tau ( p)}$};
% Text Node
\draw (166.8,53.8) node [anchor=north west][inner sep=0.75pt]   [align=left] {...};
% Text Node
\draw (174,137.4) node [anchor=north west][inner sep=0.75pt]   [align=left] {...};
% Text Node
\draw (284.28,162.44) node    {$X_{0}$};
% Text Node
\draw (280.09,72.04) node    {$X_{-1}$};
% Text Node
\draw (447.76,138.64) node    {$z$};
% Text Node
\draw (497.36,125.04) node    {$q_{1}$};
% Text Node
\draw (555.76,142.24) node    {$q_{p}$};
% Text Node
\draw (524.14,130.45) node [anchor=north west][inner sep=0.75pt]  [rotate=-20.96] [align=left] {...};
% Text Node
\draw (155.6,228.2) node [anchor=north west][inner sep=0.75pt]    {$t >0$};
% Text Node
\draw (459.6,226.6) node [anchor=north west][inner sep=0.75pt]    {$t=0$};

\end{tikzpicture} 
    \caption{Dual graph of $\overline{X}$} \label{dualgraph}
\end{figure}

If $t=0$, then we obtain a multi-scale differential on the nodal rational curve (see the right of Figure~\ref{dualgraph}). That is, an element of $\RH_0(a,-b_1,\dots,-b_p,-1,-1)$ whose residues at the first $p$ poles are zero. Also, the last two (simple) poles are unmarked. Any such translation surface $(\mathbb{CP}^1,\omega)$ has $p+1$ saddle connections, all parallel to each other. Two of them are the core curves of the half-infinite cylinders corresponding to the simple poles. We can label the saddle connections $\beta_1,\dots,\beta_{p+1}$ in clockwise order so that $\beta_1$ and $\beta_{p+1}$ are the core curves. For the multi-scale differential $X(0,\tau,{\bf C})$, the pair of saddle connections $\beta_i$ and $\beta_{i+1}$, for $1\leq i\leq p$, bounds the polar domain of $q_{\tau(i)}$ and forms an angle equal to $2\pi C_{\tau(i)}$. 

If $t \geq 1$, then we obtain a two-level multi-scale differential (see the left of Figure~\ref{dualgraph}). That is, there are two irreducible components of $\overline{X}$, located at distinct levels, intersecting at two nodes $s_1,s_2$. Let $X_0$ denote the top level component and $X_{-1}$ denote the bottom level component. For $X(t,\tau,{\bf C},[(0,v)])$, the component $X_0$ contains $q_{\tau(1)},\dots,q_{\tau(t)}$. There are two zeroes of $X_0$ at the nodes, and there are $t$ (parallel) saddle connections of $X_0$ joining the two zeroes. We can label the saddle connections $\alpha_0,\dots,\alpha_{t-1}$ in clockwise order at $s_1$. The pair of saddle connections $\alpha_{i-1}$ and $\alpha_i$ bounds the polar domain of $q_{\tau(i)}$ and forms an angle equal to $2\pi C_{\tau(i)}$ for each $i$. Then the bottom level component $X_{-1}$ contains remaining poles $q_{\tau(i+1)},\dots,q_{\tau(i+1)}$. It has two more poles at the nodes $s_1,s_2$ with nonzero residues. Similarly to the former case, there are $p-t+1$ saddle connections labeled by $\beta_1,\dots,\beta_{p-t+1}$ so that $\beta_1$ and $\beta_{p-t+1}$ bounds the polar domains of $s_1$ and $s_2$. Then the pair of saddle connections $\beta_i$ and $\beta_{i+1}$, for $1\leq i\leq p-t-2$, bounds the polar domain of $q_{\tau(t+i)}$ and forms an angle equal to $2\pi C_{\tau(i)}$. 

The number of prongs at the nodes $s_1$ and $s_2$ are given by $Q_1$ and $Q_2$, respectively. In $X_0$, the prongs are labeled by $v^+_1,\dots, v^+_{Q_1}$ in clockwise order at $s_1$ and $w^+_1,\dots, w^+_{Q_2}$ in counter-clockwise order at $s_2$, so that $v^+_1$ and $w^+_1$ correspond to the saddle connection $\alpha_0$. In $X_{-1}$, the prongs are labeled by $v^-_1,\dots, v^-_{Q_1}$ in counter-clockwise order at $s_1$ and $w^-_1,\dots, w^-_{Q_2}$ in clockwise order at $s_2$ so that $\beta_1$ lies between $v^-_1,v^-_{Q_1}$, and $\beta_{p-t+1}$ lies between $w^-_1, w^-_{Q_2}$. So the prong-matching that sends $v^-_1$ and $w^-_1$ to $v^+_u$ and $w^+_v$ can be identified with the element $(u,v)\in \mathbb{Z}/Q_1\mathbb{Z} \times \mathbb{Z}/Q_2 \mathbb{Z}$.

The multi-scale differentials $X(0,\tau,{\bf C})$ are also said to be {\em horizontal} boundary elements, since the corresponding level graph has a horizontal loop. The other boundary elements (i.e. when $t>0$) are said to be {\em vertical}, since there are two vertical edges in the corresponding level graph.

\begin{rmk}\label{multiplecount}
The expression $X(t,\tau,{\bf C},[(0,v)])$ given above is not unique for a given multi-scale differential. In particular, if $\sigma_1= \left(\begin{smallmatrix}
1 & 2 & \dots & t & t+1 & \dots & p\\
2 & 3 & \dots & 1 & t+1 & \dots & p
\end{smallmatrix}\right)$, then $X(t,\tau, {\bf C}, [(u,v)]) = X(t,\tau \circ \sigma_1, {\bf C}, [(u+C_{\tau(1)},v+D_{\tau(1)})])$. This relation is given by relabeling the saddle connections $\alpha_i$ in the top level component $X_0$. Also, if $\sigma_2 = \left(\begin{smallmatrix}
1 & \dots & t & t+1 & \dots & p\\
t & \dots & 1 & p & \dots & t+1
\end{smallmatrix}\right)$, then $X(t,\tau, {\bf C}, [(u,v)]) = X(t,\tau \circ \sigma_2, {\bf D}, [(-v,-u)])$. This relation is given by relabeling the two nodes $s_1$ and $s_2$.

If $p>3$, then $t>2$ or $p-t>1$ for any multi-scale differential. That is, the two nodes $s_1$ and $s_2$ can be distinguished implicitly as at least one of the two components $X_{-1}$ and $X_0$ does not have an involution interchanging the two nodes. Therefore, $\sigma_1$ and $\sigma_2$ generates a subgroup of $2t$ elements. So each multi-scale differential in $\partial\overline{\RR}_1 (\mu)$ has $2t$ distinct expressions of the form $X(t,\tau,{\bf C},[(u,v)])$. 

If $p\leq 3$, then there are possibly more symmetries, coming from the hyperelliptic involution that fixes all poles. More precisely, the hyperelliptic involutions that fixes all poles. 

Suppose $1\leq p\leq 2$ and $t=1$. If all $b_i$ are even, then there exists a hyperelliptic component of $\RR_1(\mu)$. For $C_i=\frac{b_i}{2}$, the hyperelliptic involution gives the identity $X(1,Id,{\bf C},[(u,v)])=X(1,Id,{\bf C},[(-v,-u)])$. Note that if $u+v=C_1$, then this gives an isomorphism between two multi-scale differentials. Moreover, if $4~\vert~b_4$, then we have an additional isomorphism between multi-scale differentials $X(1,Id,{\bf C},[(0,\frac{b_1}{4})])=X(1,Id,{\bf C},[(-\frac{b_1}{4},0)])$. So there are $\frac{b_1}{4}+1$ or $\frac{b_1+2}{4}$ prong-matching equivalence classes that give distinct multi-scale differentials if $4~\vert~b_1$ or $4\nmid b_1$, respectively. 

Suppose $2\leq p\leq 3$ and $t=2$. If all $b_i$ are even, then there exists a hyperelliptic component of $\RR_1(\mu)$, where all poles are fixed by the hyperelliptic involution. For $C_i=\frac{b_i}{2}$, the involution gives the identity $X(2,Id,{\bf C},[(0,v)])=X(2,Id,{\bf C},[(0,2C_1-v)])$ of multi-scale differentials. Two prong-matching equivalence classes $[(0,v)]$ and $[(0,2C_1-v)]$ are equal if and only if $v=C_1,\frac{C_1-C_2}{2}$. So there are $\frac{b_1+b_2}{4}+1$ or $\frac{b_1+b_2+2}{4}$ prong-matching equivalence classes that give distinct multi-scale differentials if $4~\vert~b_1+b_2$ or $4\nmid b_1+b_2$, respectively.
\end{rmk}

Moreover, for a given multi-scale differential $\overline{X}$ in the boundary, we can compute the rotation number of a translation surface in the neighborhood of $\overline{X}$. The following is proved as Proposition~7.1 in \cite{Lee}.

\begin{prop} \label{rot}
Let $\overline{X}=X(t,\tau, {\bf C}, [(0,v)])\in \partial\overline{\RR}_1 (\mu)$. The rotation number of a translation surface in the neighborhood of $\overline{X}$ is equal to $\gcd(d,Q_1,\sum_{i=1}^p C_{\tau(i)} + v)$.
\end{prop}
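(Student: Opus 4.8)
The plan is to compute the rotation number directly on a smooth translation surface $(X,\omega)$ lying in a small neighborhood of the boundary point $\overline{X}=X(t,\tau,\mathbf{C},[(0,v)])$, obtained by plumbing the two nodes $s_1,s_2$. Since the rotation number is a discrete ($\gcd$-valued) invariant, it is locally constant on the open stratum, so its value on any such nearby smooth surface is the quantity we want. First I would record that the greatest common divisor of the orders of the singularities is $c=\gcd(a,b_1,\dots,b_p)=\gcd(b_1,\dots,b_p)=d$, because $a=\sum_i b_i$; hence the definition reads $\text{rotation number}=\gcd(\mathrm{Ind}_\alpha,\mathrm{Ind}_\beta,d)$ for any symplectic basis $\{\alpha,\beta\}$ of the homology of the unpunctured torus. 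The whole computation then reduces to choosing a symplectic basis adapted to the plumbing and evaluating its two topological indices.

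For the first basis element I would take $\beta$ to be the vanishing cycle of the degeneration, that is, the core curve of the thin cylinder created by plumbing the node $s_1$. In the smooth surface this loop is freely homotopic inside $X^{\ast}$ to a small positively oriented loop around the zero that $X_0$ carries at $s_1$, whose order is $Q_1-1$ (it has $Q_1$ prongs). By the index rule recalled before the definition of the rotation number, $\mathrm{Ind}_\beta=Q_1$. As a consistency check one may instead push $\beta$ toward $X_{-1}$, where $s_1$ is a pole of order $Q_1+1$ and the index is $-Q_1$; and routing $\beta$ through $s_2$ would give $\pm Q_2$, which is harmless since $Q_1+Q_2=\sum_{i\le t}b_{\tau(i)}\equiv 0\pmod d$, so $\gcd(d,Q_1,\,\cdot\,)=\gcd(d,Q_2,\,\cdot\,)$.

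The substantial step is the second index. For $\alpha$ I would take the transverse cycle dual to $\beta$: a loop that rises through the cylinder at $s_1$ into $X_0$, crosses $X_0$ past the top-level polar domains, descends through the cylinder at $s_2$ into $X_{-1}$, crosses $X_{-1}$ past the remaining polar domains, and returns. Because replacing $\alpha$ by $\alpha+k\beta$ shifts $\mathrm{Ind}_\alpha$ by $k\,\mathrm{Ind}_\beta=kQ_1$, this index is only well defined modulo $Q_1$, which is exactly the freedom I would exploit to avoid pinning down a canonical representative. Keeping $\alpha$ piecewise geodesic with corners at regular points, its index is $\tfrac{1}{2\pi}$ times the total turning, which splits into the prong-matching twists picked up at the two cylinder crossings, namely $u$ at $s_1$ and $v$ at $s_2$, and the turning accumulated while sweeping the polar domains, each domain of $q_{\tau(i)}$ subtending angle $2\pi C_{\tau(i)}$. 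With the normalization $u=0$ (always achievable by Remark~\ref{multiplecount}, which also explains why only the second coordinate of the prong-matching survives), summing these contributions gives
\begin{equation*}
\mathrm{Ind}_\alpha\equiv \sum_{i=1}^{p}C_{\tau(i)}+v \pmod{Q_1},
\end{equation*}
where the top-level domains contribute $\sum_{i\le t}C_{\tau(i)}=Q_1\equiv 0$. Combining with $\mathrm{Ind}_\beta=Q_1$ yields $\gcd(\mathrm{Ind}_\alpha,\mathrm{Ind}_\beta,c)=\gcd\!\left(\sum_{i=1}^{p}C_{\tau(i)}+v,\,Q_1,\,d\right)$, the claimed value.

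The main obstacle is precisely the bookkeeping of $\mathrm{Ind}_\alpha$: I must verify that the prong-matching integers $(u,v)$ really encode an honest rotational twist of the horizontal direction across each plumbing cylinder, that the two level-traversals contribute the polar-domain angles $2\pi C_{\tau(i)}$ (and not their complements $2\pi(b_{\tau(i)}-C_{\tau(i)})$), and that all orientation and sign conventions close up consistently around the loop; the mod-$Q_1$ ambiguity is what makes the final answer independent of the chosen transverse representative. Finally, the horizontal case $t=0$ fits the same scheme but is simpler: the plumbed pair of simple poles forms a cylinder whose core curve has index $0$, the transverse cycle has index $\sum_{i=1}^{p}C_{\tau(i)}$, and the formula specializes correctly upon setting $Q_1=0$ and $v=0$.
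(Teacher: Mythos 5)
First, a note on the comparison itself: the paper does not contain a proof of this proposition. It is imported verbatim from Proposition~7.1 of \cite{Lee} (the text immediately above the statement says exactly this), so there is no in-paper argument to measure yours against. That said, your plan is the natural one and is in substance the argument of the cited reference: plumb the nodes, take the vanishing cycle $\beta$ and a dual transverse cycle $\alpha$ as a symplectic basis, and compute $\gcd(\mathrm{Ind}_\alpha,\mathrm{Ind}_\beta,c)$. The easy parts are handled correctly: $c=\gcd(a,b_1,\dots,b_p)=d$ since $a=\sum_i b_i$; the index of the core curve of the $s_1$-neck is $Q_1$ (near that curve the flat structure is a $Q_1$-fold cover of a planar annulus, so the total turning is $2\pi Q_1$); the ambiguity of $\mathrm{Ind}_\alpha$ under $\alpha\mapsto\alpha+k\beta$ is exactly absorbed by taking $\gcd$ with $Q_1$; and the discrepancy between $Q_1$ and $-Q_2$ is harmless modulo $d$.

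The genuine gap is the one you flag yourself: the identity $\mathrm{Ind}_\alpha\equiv\sum_{i=1}^{p}C_{\tau(i)}+v \pmod{Q_1}$ is asserted rather than proved, and it is the entire content of the proposition. Two separate facts are hiding there. First, that crossing the polar domain of $q_{\tau(i)}$ contributes $+C_{\tau(i)}$ (or $-(b_{\tau(i)}-C_{\tau(i)})$, which is indeed harmless modulo $d$); this can be checked by a slit-plane model of a polar domain. Second, and crucially, that crossing the $s_2$-neck glued with prong-matching $(u,v)$ contributes exactly $+v$ after normalizing $u=0$, with the paper's labeling of the prongs $w_j^{\pm}$ and the chosen orientation of $\alpha$. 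This second point requires an honest computation in the plumbing local model, because the sign and the offset are not free parameters: $[(0,v)]$ and $[(0,-v)]$ are in general distinct boundary points, and the two candidate formulas give genuinely different answers. For instance, for $\mu=(12,-6,-6)$, $t=1$, $\tau=Id$, ${\bf C}=(3,1)$, $v=1$, one has $\gcd(6,3,4+1)=1$ while $\gcd(6,3,4-1)=3$; so a sign (or off-by-one) slip at the neck crossing yields a false statement, and nothing in your write-up pins the convention down. Until that local-model computation is carried out, the proof is a correct and well-structured plan, not a proof. A minor additional point: the normalization $u=0$ is achieved via the level rotation action that defines the prong-matching equivalence classes (Lemma~\ref{genus1lemma} and Proposition~\ref{plumbvertical}), not via Remark~\ref{multiplecount}, which concerns the relabeling symmetries of the expression $X(t,\tau,{\bf C},[(u,v)])$.
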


\section{Counting cells}\label{sec:Counting}

In this section, we will count the number of 0-cells, 1-cells, and 2-cells of the cellular decomposition of $\RR_1(\mu)$ given by the canonical complex projective structure. If $\RR_1(\mu)$ does not have a hyperelliptic connected component with trivial ramification profile, or an orbifold point, then the computation is easier and written in simple formulae. The hyperelliptic components or the cells containing orbifold points produce the exceptions in the counting argument, so we will introduce the correction terms, which are zero for most cases, for the formulae counting the number of cells. 

Given a multi-scale differential $\overline{X}\in \partial\PP\overline{\RR}_1(\mu)$, we can describe the neighborhood of $\overline{X}$ by smoothing process called {\em plumbing construction}. For a given prong-matching and the scaling parameter, this allows to construct a translation surface $(X,\omega)$ in the neighborhood of $\overline{X}$. Moreover, we can determine the cell containing $(X,\omega)$.

\subsection{Plumbing construction}

\begin{prop} \label{plumbhorizontal}
Any multi-scale differential $X(0,\tau,{\bf C})$ is contained in a cell of cylinder type. Additionally, it is not adjacent to any equatorial arc. 
\end{prop}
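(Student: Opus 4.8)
The plan is to run the plumbing construction at the horizontal node, read off the core, reduce the first assertion to the core-type classification of Corollary~\ref{cor:core}, and then deduce the second assertion from the model of cylinder-type cells set up in Section~\ref{sub:ExcSymCel}.

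First I would describe the smoothing. By the discussion following Lemma~\ref{genus1lemma} (see the right of Figure~\ref{dualgraph}), the differential $X(0,\tau,{\bf C})$ lives on a nodal rational curve carrying two unmarked simple poles $s_1,s_2$; by the residue theorem their residues are opposite, so their polar domains are half-infinite cylinders of equal circumference, bounded by the closed saddle connections $\beta_1$ and $\beta_{p+1}$. Plumbing the horizontal node glues these two half-infinite cylinders into a single finite flat cylinder $\mathcal{C}$ and yields a nearby genus-one translation surface $(X,\omega)$; the plumbing parameter governs the length and twist of $\mathcal{C}$ while the circumference stays fixed by the residue. The remaining $p$ polar domains, those of the residueless poles, are untouched by the smoothing and are each still bounded by exactly the two saddle connections $\beta_i,\beta_{i+1}$, so $\alpha_j=2$ for every $j$.

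Next I would identify the core of $(X,\omega)$. Since $\alpha_j=2$ for all $j$, the relation $2p+2=T+\sum_j\alpha_j$ of Lemma~\ref{lem:coreformula} forces $T=2$, so by Corollary~\ref{cor:core} the interior of the core is a quadrilateral, a pair of triangles, or a cylinder. I would then observe that the core is the complement of the open polar domains, hence equals $\overline{\mathcal{C}}$ together with the one-dimensional slits $\beta_2,\dots,\beta_p$; its interior is therefore the open annulus $\mathrm{int}(\mathcal{C})$. Being a topological cylinder, this places $(X,\omega)$ in a cell of cylinder type, proving the first assertion. For the second assertion I would invoke the explicit model of a cylinder-type cell from Section~\ref{sub:ExcSymCel}: choosing a core curve $\alpha$ and a cross curve $\beta$ of $\mathcal{C}$ as symplectic basis and writing $\tau=\int_\beta\omega/\int_\alpha\omega$, that section identifies the cell with $\mathcal{H}/\Z$ under $\tau\mapsto\tau+1$, and via $q=\exp(2\pi i\tau)$ with a punctured disk whose puncture $q=0$ is the cusp obtained by sending the cross curve to infinity. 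Precisely this degeneration turns the finite cylinder back into two half-infinite cylinders, so the cusp is our boundary point $X(0,\tau,{\bf C})$. By Proposition~\ref{adjacentarcs} the cell has a single bounding equatorial arc, namely the locus $\Ima\tau=0$, i.e. $|q|=1$; since $q=0$ does not lie in the closure of $\{|q|=1\}$, the point $X(0,\tau,{\bf C})$ is not adjacent to any equatorial arc.

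The step I expect to be the main obstacle is the precise identification of the core interior with the glued annulus: one must verify that smoothing the horizontal node introduces no extra saddle connections subdividing $\mathcal{C}$ and that its two boundary circles $\beta_1,\beta_{p+1}$ remain distinct closed geodesics, so that the $T=2$ core is genuinely of cylinder type rather than a quadrilateral or a pair of triangles. I would settle this by tracking the flat structure through the plumbing, using that the circumference is fixed while the length is large, so $\mathcal{C}$ is an honest long flat cylinder with $\beta_1,\beta_{p+1}$ as its two distinct boundary saddle connections.
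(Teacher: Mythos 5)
Your handling of the first assertion is essentially the paper's own route (plumb the node, observe the glued flat cylinder, identify the core), with welcome extra bookkeeping via Lemma~\ref{lem:coreformula}. One point you should still make explicit there: knowing the core interior is an annulus does not by itself place the surface in a \emph{cell}, since points of the equatorial net also have cores; what closes this is that the core curve and any cross curve of the flat cylinder form a symplectic basis whose periods are not $\mathbb{R}$-collinear (the cylinder has positive height), so the surface lies off the net, and then the core type identifies the cell as being of cylinder type.

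The second assertion is where your argument has a genuine gap. You only rule out adjacency to the \emph{single arc bounding the cylinder-type cell itself}: in the intrinsic coordinate $q$ on that cell the bounding arc sits at $|q|=1$ and the cusp at $q=0$. But the proposition asserts non-adjacency to \emph{every} equatorial arc, and arcs bounding \emph{other} cells could a priori accumulate at the cusp --- exactly as happens at the vertical boundary points, which by Proposition~\ref{plumbvertical} are adjacent to $2\lcm(Q_1,Q_2)$ arcs belonging to many different cells. To exclude this you must know that the union of the open cell with the cusp contains a full neighborhood of the cusp in $\PP\overline{\RR}_1(\mu)$; your argument presupposes this local structure rather than proving it, and it is precisely the nontrivial content of the proposition. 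The paper supplies the missing step by a direct argument about nearby surfaces: if a plumbed surface with small parameter $s$ lay on an equatorial arc, then all its absolute periods would be real multiples of the residue $r = \int_{\alpha}\omega$; but $\left|\int_{\beta}\omega\right|$ diverges as $s\to 0$ \emph{for every} cross curve $\beta$, while Dehn twisting around the cylinder changes a cross-curve period by integer multiples of $r$, so one could always find a cross curve $\beta'$ with $\left|\tfrac{1}{r}\int_{\beta'}\omega\right|<1$, a contradiction. Your proposal can be repaired without that computation by instead invoking that the plumbing parameter is a local coordinate at the horizontal boundary point, so that a punctured neighborhood of the cusp consists exactly of plumbed surfaces, which by your first part all lie in cells of cylinder type and hence off the net; but as written, the inference from ``the cell's own arc stays away from $q=0$'' to ``no arc is adjacent'' does not follow.
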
 

\begin{proof}
Let $\overline{X}=X(0,\tau,{\bf C})$. By plumbing the node with any scaling parameter $s\in \C$, we obtain a translation surface $(X,\omega)$ containing the cylinder. Therefore, $\overline{X}$ is contained in a cell of cylinder type. 

Now assume the contrary --- that $(X,\omega)$ is contained in an equatorial arc. Let $\alpha$ be a closed geodesic enclosing the cylinder. We can find a cross curve $\beta$ of the cylinder such that $(\alpha,\beta)$ is a symplectic basis of the absolute homology of $X$. Moreover, since $\alpha$ comes from a small circle around a simple pole, $\int_{\alpha} \omega = r$. Also remark that $\left| \int_{\beta} \omega\right|$ diverges as $s\to 0$, since the cylinder degenerates to two half-infinite cylinders. By choosing smaller enough $s\in \C$, we may assume that $\left|\frac{1}{r}\int_{\beta}\omega \right|>1$ for any cross curve $\beta$. That is, $\int_{\beta}\omega = cr$ for some $c\in \mathbb{R}$. Note that a cylinder has infinitely many cross curves. More precisely, for any integer $k$, we can find a cross curve $\beta'$ such that $\int_{\beta'}\omega = (c-k)r$. Choose $\beta'$ such that $0\leq c-k <1$. So we have $\left|\frac{1}{r}\int_{\beta'}\omega \right|<1$, a contradiction. 
\end{proof}

Therefore, a horizontal multi-scale differential represents a cusp in a cell of cylinder type. 

For any vertical multi-scale differential, we can count the number of equatorial arcs adjacent to it, using the description of translation surfaces obtained by plumbing construction. 

\begin{prop} \label{plumbvertical}
A multi-scale differential $X(t,\tau,{\bf C},[(u,v)])$ for $t>0$ is adjacent to $2\lcm(Q_1,Q_2)$ equatorial arcs (thus, it is also adjacent to $2\lcm(Q_1,Q_2)$ 2-cells). 
\end{prop}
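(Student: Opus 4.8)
The plan is to describe the surfaces near $\overline{X}$ explicitly by plumbing, track the projective period coordinate as a function of the level parameter, and recognize $\overline{X}$ as a branch point of the developing map of the projective structure whose order is exactly $\lcm(Q_1,Q_2)$; the preimage of $\mathbb{RP}^{1}$ then consists of $2\lcm(Q_1,Q_2)$ arcs. First I would fix a symplectic basis adapted to the degeneration. Since $\overline{X}$ is vertical ($t>0$), the components $X_0,X_{-1}$ are glued at the two nodes $s_1,s_2$, and smoothing both necks produces a genus-one surface in which the two neck circles are homologous (they cobound the cylindrical part); call their common class $\alpha$, the vanishing cycle, and let $\beta$ be a cross curve passing once through each neck, so that $\{\alpha,\beta\}$ is a symplectic basis of $H_1$ of the unpunctured torus. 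By the definition of the equatorial net together with Section~\ref{sub:volume}, a smoothed surface lies on the equatorial net exactly when its absolute periods are $\mathbb{R}$-collinear, i.e. when $Vol_{\omega}=\mathfrak{Im}\!\left(\overline{\int_{\alpha}\omega}\,\int_{\beta}\omega\right)=0$.

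Next I would record the leading asymptotics of the two periods in the scaling parameter $s$ of the single level passage (the parameter rescaling $\omega_{-1}$ relative to $\omega_{0}$). The period $\int_{\alpha}\omega$ is the residue of $\omega$ around a neck; since the top-level zero contributes no residue while $\omega_{-1}$ has nonzero residue $R$ at the corresponding node, one gets $\int_{\alpha}\omega = sR + O(s^{2})$ with $R\neq 0$. On the other hand $\beta$ meets the top level in an arc of fixed period $P\neq 0$, so $\int_{\beta}\omega = P + O(s)$. Hence the projective coordinate is $\left[\int_{\alpha}\omega:\int_{\beta}\omega\right]=\left[(R/P)\,s+O(s^{2}):1\right]$, which tends to the boundary value $[0:1]$ as $s\to 0$.

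The heart of the argument — and the step I expect to be most delicate — is to express $s$ in terms of a genuine holomorphic coordinate $w$ on the one-dimensional space $\PP\overline{\RR}_1(\mu)$ near $\overline{X}$. Writing $\delta_1,\delta_2$ for the plumbing parameters of the two necks, the multi-scale construction forces $\delta_1^{Q_1}=\delta_2^{Q_2}=s$, since both edges cross the single level passage carrying $Q_1$ and $Q_2$ prongs. The analytic branch of $\{\delta_1^{Q_1}=\delta_2^{Q_2}\}$ selected by the fixed prong-matching $[(u,v)]$ is smooth and uniformized by $w\mapsto(\delta_1,\delta_2)=(\zeta_1 w^{Q_2/g},\,\zeta_2 w^{Q_1/g})$, where $g=\gcd(Q_1,Q_2)$ and $\zeta_1,\zeta_2$ are suitable roots of unity; substituting gives $s=w^{\,Q_1Q_2/g}=w^{\lcm(Q_1,Q_2)}$. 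I would justify the relations $\delta_i^{Q_i}=s$ and the smoothness of this branch from the local normal form of the multi-scale plumbing, the prong-matching pinning down the rotational alignment of the $Q_i$-fold cover at each neck; this is the one place where care with the conventions of the multi-scale compactification is genuinely required.

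Finally, combining the two previous steps, the developing map near $\overline{X}$ reads $w\mapsto (R/P)\,w^{\lcm(Q_1,Q_2)}+\text{h.o.t.}$, which is a branch point of order $L:=\lcm(Q_1,Q_2)$ because $R,P\neq 0$. Up to a local biholomorphism it is the normal form $w\mapsto w^{L}$, whose preimage of $\mathbb{RP}^{1}\subset\mathbb{CP}^{1}$ is the locus $\{\,L\arg w\equiv\arg(P/R)\ (\mathrm{mod}\ \pi)\,\}$, consisting of $2L$ rays emanating from $w=0$. These rays are exactly the equatorial arcs adjacent to $\overline{X}$, and the $2L$ sectors they cut out are the adjacent $2$-cells, giving the claimed count of $2\lcm(Q_1,Q_2)$ in both cases. (This also explains the contrast with the horizontal case of Proposition~\ref{plumbhorizontal}, where the cross period diverges instead of the vanishing-cycle period going to zero, so no equatorial arc is attached.)
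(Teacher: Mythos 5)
Your proof is correct and follows essentially the same route as the paper's: plumbing with scaling parameter $s$, the period asymptotics ($\int_{\alpha}\omega\sim sR$ against $\int_{\beta}\omega\sim P$) showing the equatorial net is met exactly when $s\in\mathbb{R}$, and then the factor $\lcm(Q_1,Q_2)$ coming from the monodromy around the boundary point. The only repackaging is that you extract this factor from the local uniformization $s=w^{\lcm(Q_1,Q_2)}$ of the branch of $\{\delta_1^{Q_1}=\delta_2^{Q_2}\}$ selected by $[(u,v)]$, whereas the paper states the same fact as the level rotation action replacing $(u,v)$ by $(u-1,v+1)$ each time $s$ loops once around $0$, whose orbit has exactly $\lcm(Q_1,Q_2)$ prong-matchings.
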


\begin{proof}
Let $\overline{X}=X(t,\tau,{\bf C},[(u,v)])$. It has two irreducible components $(X_0,\eta_0)$ and $(X_{-1},\eta_{-1})$. Recall that $X_0$ contains $t$ poles $q_{\tau(1)},\dots,q_{\tau(t)}$ and two zeroes of orders $Q_1-1$ and $Q_2-1$ at the nodes $s_1$ and $s_2$, respectively. There are $t$ (parallel) saddle connections $\alpha_0,\dots, \alpha_{t-1}$ in $X_0$, joining $s_1$ and $s_2$. The domain of $q_{\tau(i)}$ is bounded by $\alpha_i$ and $\alpha_{i+1}$ ($\alpha_t$ if $i=t-1$). By scaling $\eta_0$ if necessary, we may assume that $\int_{\beta_i} \eta_0 =-1$. 

Similarly, $\eta_{-1}$ has two (non-residueless) poles of orders $Q_1+1$ and $Q_2+1$ at the nodes $s_1$ and $s_2$, respectively. There are $p-t+1$ (parallel) saddle connections $\beta_1,\dots, \beta_{p-t+1}$ joining $z$ to itself. The domain of $q_{\tau(t+i)}$ is bounded by $\alpha_i$ and $\alpha_{i+1}$ for each $i=1,\dots, p-t$. The domain of $s_1$ ($s_2$ respectively) is bounded by $\alpha_1$ ($\alpha_{p-t+1})$. By scaling $\eta_{-1}$ if necessary, we may assume that $\int_{\alpha_i}\eta_{-1}=1$. 

By plumbing construction with a prong-matching $(u,v)$ and the scaling parameter $s\in \C$, we obtain a translation surface $(X,\omega)\in \RR_1(\mu)$. The surface $(X,\omega)$ has a saddle connection $\alpha'_j$ that deforms to $\beta_j$ as $s\to 0$, and $\int_{\alpha'_j}\omega=s$. There is also a saddle connection $\beta'_i$ in $(X,\omega)$ that deforms to $\alpha_i$ as $s\to 0$. There exists $M>0$ such that $\left|\int_{\beta'_i}\omega -1\right|<M|s|$. So for small $s$ satisfying $Im(s)<0$, we can conclude that $\frac{\int_{\alpha'_j}\omega}{\int_{\beta'_i}\omega}$ has positive imaginary part. Also when $s$ satisfies $Im(s)>0$, $\frac{\int_{\alpha'_j}\omega}{\int_{\beta'_i}\omega}$ has negative imaginary part. Thus $(X,\omega)$ is contained in an equatorial arc if and only if $s\in \mathbb{R}$. Therefore, for each prong-matching $(u,v)$, we can find two equatorial arcs adjacent to $\overline{X}$.

Consider a small circle $s=\varepsilon e^{i\theta}$, $0\leq \theta\leq 2\pi$, centered at $0$. As the scaling parameter $s$ travels from $\theta=0$ to $\theta=2\pi$, by level rotation action, the prong-matching $(u,v)$ is replaced by $(u-1,v+1)$. The orbit of this action has $\lcm(Q_1,Q_2)$ prong-matchings, so $\overline{X}$ is adjacent to $2\lcm(Q_1,Q_2)$ equatorial arcs. 
\end{proof}

\subsection{Counting 0-cells}

The 0-cells of the cellular decomposition consist of two-level multi-scale differentials $\overline{X}\in \partial \PP\RR_1 (\mu)$. By Lemma~\ref{genus1lemma}, any such $\overline{X}$ can be written as $X(t,\tau,{\bf C},[(u,v)])$ for $t>0$. Also, there are $2t$ distinct ways to write the same $\overline{X}$ (with few exceptions for the cases $p=2,3$). Now we can compute the total number of 0-cells. 

\begin{cor}
The total number $V$ of 0-cells of the cellular decomposition of $\PP\RR_1(\mu)$ is equal to

\begin{equation}
    V=\sum_{t,\tau,{\bf C}} \frac{1}{2t}\gcd(Q_1,Q_2)+\varepsilon_0.
\end{equation}

The correction term $\varepsilon_0$ is given by
\begin{equation}
\varepsilon_0=
    \begin{cases}
    \frac{1}{2} & \text{if } p=1, a\in 4\mathbb{N}+2 \\
    1 & \text{if } p=1, a\in 4\mathbb{N}^{\ast} \\
    1 & \text{if } p=2, b_1,b_2\in 4\mathbb{N}+2 \\ 
    2 & \text{if } p=2, b_1\in 4\mathbb{N}+2,b_2\in 4\mathbb{N}^{\ast} \\ 
    3 & \text{if } p=2, b_1,b_2\in 4\mathbb{N}^{\ast} \\ 
    2 & \text{if } p=3, b_i\in 4\mathbb{N}^{\ast} \text{ for all } i, \text{ or } b_i\in 4\mathbb{N}+2 \text{ for all } i\\ 
    3 & \text{if } p=3, \text{ all } b_i \text{ even and } b_1 \in 4\mathbb{N}^{\ast}, b_2\in 4\mathbb{N}+2 \\ 
    0 & \text{otherwise. }
    \end{cases}
\end{equation}
\end{cor}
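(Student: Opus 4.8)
The plan is to count 0-cells directly from their combinatorial description. By Lemma~\ref{genus1lemma} every 0-cell is a vertical two-level multi-scale differential $X(t,\tau,{\bf C},[(u,v)])$ with $1\le t\le p$, so I would first fix a triple $(t,\tau,{\bf C})$ and count the prong-matching equivalence classes attached to it. The level-rotation analysis in the proof of Proposition~\ref{plumbvertical} shows that the $Q_1Q_2$ prong-matchings of $\mathbb{Z}/Q_1\mathbb{Z}\times\mathbb{Z}/Q_2\mathbb{Z}$ fall into orbits of size $\lcm(Q_1,Q_2)$, so there are exactly $Q_1Q_2/\lcm(Q_1,Q_2)=\gcd(Q_1,Q_2)$ classes, each giving one boundary point. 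Summing over all triples produces $\sum_{t,\tau,{\bf C}}\gcd(Q_1,Q_2)$ labelled \emph{expressions}, each of which presents a single 0-cell.

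The second step is to pass from expressions to 0-cells by quotienting out the relabellings $\sigma_1,\sigma_2$ of Remark~\ref{multiplecount}, which generate a group of order $2t$. For $p\ge 4$ the two nodes are always implicitly distinguishable (Remark~\ref{multiplecount}), so this group acts freely on expressions: each 0-cell then has exactly $2t$ of them and contributes weight $2t\cdot\tfrac1{2t}=1$ to the main term $\sum_{t,\tau,{\bf C}}\tfrac1{2t}\gcd(Q_1,Q_2)$. Hence this main term already equals the number of 0-cells, with $\varepsilon_0=0$, and the same holds for $p\le 3$ on every \emph{non-balanced} triple, since there $\sigma_2$ sends $(\tau,{\bf C})$ to a genuinely different triple.

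The correction $\varepsilon_0$ then collects the deficit from the finitely many triples on which the action fails to be free. These are exactly the balanced configurations $C_i=\tfrac{b_i}{2}$ (forcing the relevant $b_i$ even, hence only $p\le 3$), where the node swap $\sigma_2$ — realised by the hyperelliptic involution — fixes the triple and merely permutes its prong classes. I would make this explicit by writing $m$ for the common value $Q_1=Q_2$ and indexing the $m$ prong classes by the level-rotation invariant $s=u+v\in\mathbb{Z}/m\mathbb{Z}$; the identification $[(u,v)]=[(-v,-u)]$ becomes $s\mapsto -s$, whose orbit count is $\tfrac{m+2}{2}$ if $m$ is even and $\tfrac{m+1}{2}$ if $m$ is odd. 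Subtracting the main-term value $\tfrac1{2t}\gcd(Q_1,Q_2)=\tfrac{m}{2}$ of the balanced triple gives a deficit of $\tfrac12$ per fixed prong class; as $s\mapsto -s$ has two fixed points for $m$ even and one for $m$ odd, this yields $\varepsilon_0=1$ when $4\mid a$ and $\varepsilon_0=\tfrac12$ when $a\in4\mathbb N+2$, reproducing the $p=1$ rows of the table.

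For $p=2$ and $p=3$ the same principle applies, but the bookkeeping is the main obstacle, and it is there that the residues of the $b_i$ modulo $4$ produce the values $\{1,2,3\}$. One must enumerate the balanced triples at \emph{both} $t=1$ and $t=2$, determine for each its stabiliser inside $\langle\sigma_1,\sigma_2\rangle$ (now $\sigma_1$ can permute distinct balanced triples rather than fix them), and match the main-term contribution of the whole relabelling class against the true distinct-cell counts supplied by Remark~\ref{multiplecount} — namely $\tfrac{b_1+b_2}{4}+1$ versus $\tfrac{b_1+b_2+2}{4}$ according to the parity of $\tfrac{b_1+b_2}{2}$, together with the analogous $t=1$ counts. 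I expect each line of the case table to drop out of a short count of the $s\in\mathbb{Z}/m\mathbb{Z}$ fixed by the relevant involution, once the correct value of $m$ and the stabiliser order for that pair $(p,t)$ have been pinned down and the deficits summed over $t$.
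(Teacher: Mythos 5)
Your plan follows the paper's argument exactly: the main term counts the $\gcd(Q_1,Q_2)$ prong-matching classes of each triple $(t,\tau,{\bf C})$ with weight $\frac{1}{2t}$, and the correction term collects the deficits of the balanced triples $C_i=b_i/2$, where Remark~\ref{multiplecount} replaces the free $2t$-fold relabelling by a smaller orbit. Your $p=1$ computation is correct and complete: with $m=a/2$, the involution is $s\mapsto -s$ on $\mathbb{Z}/m\mathbb{Z}$, the deficit is half the number of its fixed points, giving $1$ when $4\mid a$ and $\frac12$ when $a\in 4\mathbb{N}+2$, in agreement with the table and with the paper's proof.

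The gap is the part you postpone, and it is not routine bookkeeping: executed faithfully, your own recipe does not reproduce the stated table. For $p=2$ the two balanced triples at $t=1$ (top pole $q_1$, resp.\ $q_2$) contribute $f_1/2+f_2/2$ with $f_i=2$ if $4\mid b_i$ and $f_i=1$ if $b_i\in 4\mathbb{N}+2$, while the balanced $t=2$ triples contribute $f/2$ with $f=2$ iff $4\mid b_1+b_2$; for $b_1,b_2\in 4\mathbb{N}+2$ this totals $\frac12+\frac12+1=2$, not the tabulated $1$ (it does give $2$ and $3$ in the other two $p=2$ rows). For $p=3$ only $t=2$ balanced triples exist (a M\"obius involution of the bottom level cannot fix the zero and two poles), and summing $f_{ij}/2$ over the three top pairs gives $3$ when all $b_i$ are congruent mod $4$ and $2$ in the mixed case --- the reverse of the table. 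No further identification can rescue the $t=1$ deficit at $p=2$: it equals $\lceil m_1/2\rceil-m_1/2>0$ with $m_1=b_1/2$, since an involution on $m_1$ prong classes has at least $\lceil m_1/2\rceil$ orbits. A concrete test is $\mu=(4,-2,-2)$: the stratum has two components, the $2$-torsion (ramification profile $\mathrm{Id}$) component isomorphic to $Y(2)$ with three vertical boundary points (two with $t=1$, one with $t=2$), and the $\wp''=0$ (profile $(12)$) component, a rational curve with one vertical and one horizontal boundary point; hence $V=4$ while the main term is $2$, forcing $\varepsilon_0=2$. Note also that the paper's own proof treats only $t=2$ when $p=2$ and its $p=3$ sentence ("add $\frac12$ per pair with sum divisible by $4$") is not consistent with its own table, so there is a real discrepancy to resolve here rather than a short count that "drops out"; as written, your proposal does not establish the statement, and completing it honestly would require either finding an identification special to the problematic cases or correcting the case table.
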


\begin{proof}
Assume that all $b_i$ are even. Otherwise the correction term is obviously zero. 

If $p=1$, then the multi-scale differentials $X(1,Id,{\bf C},[(0,v)])$ with $C_1=\frac{b_1}{2}=\frac{a}{2}$ are counted only once, not twice. But the number of prong-matching equivalence classes is also half of the other cases, unless $4\nmid a$. So the correction term is equal to $-\frac{1}{2}\frac{a}{2}+\frac{a+2}{4}=\frac{1}{2}$ if $4\nmid a$.

If $p=2$, then similarly, $X(2,\tau,{\bf C},[(0,v)])$ with $C_i=\frac{b_i}{2}$ is counted only twice, not 4 times. But the number of prong-matching equivalence classes is also half of other cases, unless $4\nmid a$. So the correction term is equal to $\sum_{\tau}(-\frac{1}{4}\frac{a}{2}+\frac{1}{2}(\frac{a+x}{4})=\frac{1}{2}$ if $4\nmid a$. 

If $p=3$, by the same argument as above, we need to add $\frac{1}{2}$ whenever we have a pair of poles whose sum of the orders is divisible by 4. 
\end{proof}

Recall from Lemma~\ref{genus1lemma} that $Q_1=\sum_{i=1}^t C_{\tau(i)}$ and $Q_2=\sum_{i=1}^t (b_{\tau(i)}-C_{\tau(i)})$. 

\subsection{Counting 1-cells}

For each 0-cell $c_0$ given by $X(t,\tau,{\bf C},[(u,v)])$, the number of 1-cells adjacent to $c_0$ (in other words, the order of $c_0$) is equal to $\lcm(Q_1,Q_2)$ by Proposition~\ref{plumbvertical}. Now we can compute the number of 1-cells, using the fact that the number of edges of a graph is equal to one half of the sum of the orders of all vertices. We denote $A=\sum_{i=1}^p b_i^2$ and $B=\prod_{i=1}^p (b_i-1)$. 

\begin{cor} \label{edgecount}
The total number $E$ of 1-cells of the cellular decomposition of $\PP\RR_1(\mu)$ is equal to

\begin{equation}
    E=\frac{p!B}{48}(A+4a+3a^2) +\varepsilon_1.
\end{equation}

The correction term $\varepsilon_1$ is given by

\begin{equation}
\varepsilon_1=
    \begin{cases}
    \frac{a}{4} & \text{if } p=1, \text{ and } a\in 2\mathbb{N}^{\ast} \\
    \frac{a}{2} & \text{if } p=2, \text{ and } b_1,b_2\in 2\mathbb{N}^{\ast} \\ 
    0 & \text{otherwise. }
    \end{cases}
\end{equation}
\end{cor}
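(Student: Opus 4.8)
The plan is to treat the equatorial net as an embedded graph and apply the handshake lemma: the number $E$ of $1$-cells equals one half of the sum of the vertex degrees, where the vertices are the $0$-cells. By Proposition~\ref{plumbvertical}, each vertical multi-scale differential $X(t,\tau,{\bf C},[(u,v)])$ is incident to exactly $2\lcm(Q_1,Q_2)$ equatorial arcs, so $2\lcm(Q_1,Q_2)$ is the degree I feed into the handshake count. The first step is to convert the sum over distinct $0$-cells into a sum over the combinatorial data $(t,\tau,{\bf C})$, exactly as in the count of $V$: for fixed $(t,\tau,{\bf C})$ there are $\gcd(Q_1,Q_2)$ prong-matching equivalence classes, and each resulting $0$-cell admits $2t$ distinct expressions by Remark~\ref{multiplecount}, so distinct $0$-cells are weighted by $\tfrac{1}{2t}$. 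Combining the handshake factor $\tfrac12$, the degree $2\lcm(Q_1,Q_2)$, and the identity $\gcd(Q_1,Q_2)\cdot\lcm(Q_1,Q_2)=Q_1Q_2$, the main term collapses to
\[
E_{\mathrm{main}}=\sum_{t=1}^{p}\sum_{\tau\in S_p}\sum_{{\bf C}}\frac{Q_1Q_2}{2t}.
\]

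Next I would evaluate this sum in closed form. Since $Q_1Q_2$ depends only on the image set $S=\{\tau(1),\dots,\tau(t)\}$ and on $(C_i)_{i\in S}$, summing out the free variables $(C_i)_{i\notin S}$ contributes a factor $\prod_{i\notin S}(b_i-1)$, while the $t!(p-t)!$ permutations with a given image $S$ are interchangeable. Writing $Q_1Q_2=\sum_{i,j\in S}C_i(b_j-C_j)$ and splitting into diagonal and off-diagonal terms, the elementary identities $\sum_{C=1}^{b-1}C(b-C)=\tfrac{(b-1)b(b+1)}{6}$ and $\sum_{C=1}^{b-1}C=\tfrac{(b-1)b}{2}$ reduce the inner sum to $\prod_{k\in S}(b_k-1)$ times a polynomial in the symmetric quantities $\sigma_S=\sum_{i\in S}b_i$ and $\sigma^{(2)}_S=\sum_{i\in S}b_i^2$. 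The two products of $(b_i-1)$ then recombine into $B$, leaving an overall factor $\tfrac{B}{2}$.

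I would then sum over all $S$ of size $t$, replacing $\sigma_S$, $\sigma^{(2)}_S$, and $\sigma_S^2$ by the appropriate $\binom{p-1}{t-1}$- and $\binom{p-2}{t-2}$-multiples of $a$, $A$, and $a^2-A$. The crucial simplification is that the factorial–binomial coefficients telescope: $\tfrac{t!(p-t)!}{t}\binom{p-1}{t-1}=(p-1)!$ and $\tfrac{t!(p-t)!}{t}\binom{p-2}{t-2}=(t-1)(p-2)!$, so the residual sums over $t$ give $p!$ and $\tfrac{p!}{2}$ respectively. Assembling these produces $\tfrac{p!B}{2}\cdot\tfrac{A+4a+3a^2}{24}=\tfrac{p!B}{48}(A+4a+3a^2)$, which matches the stated main term; the case $p=1$ already gives the sanity check $\tfrac{a(a^2-1)}{12}$.

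The last step, which I expect to be the main obstacle, is the correction term $\varepsilon_1$. The reduction above tacitly assumes every $0$-cell has precisely $2t$ expressions and degree $2\lcm(Q_1,Q_2)$, but this fails for the hyperelliptic-symmetric vertices with $C_i=\tfrac{b_i}{2}$ that arise when $p\le 2$ and the relevant $b_i$ are even: there the hyperelliptic involution identifies prong-matchings (Remark~\ref{multiplecount}), reducing the number of prong-matching classes and altering the local edge count from its generic value. I would isolate these finitely many special configurations, recompute their actual degree and multiplicity directly from the plumbing picture and the analysis of exceptionally symmetric cells in Proposition~\ref{prop:ExcSymCel}, and record the net discrepancy, yielding $\varepsilon_1=\tfrac{a}{4}$ when $p=1$ with $a$ even, $\varepsilon_1=\tfrac{a}{2}$ when $p=2$ with $b_1,b_2$ even, and $0$ otherwise.
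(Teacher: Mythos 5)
Your proposal matches the paper's proof essentially step for step: the same handshake count with vertex degree $2\lcm(Q_1,Q_2)$ from Proposition~\ref{plumbvertical}, the same $\tfrac{1}{2t}$-weighting of the $\gcd(Q_1,Q_2)$ prong-matching classes via Remark~\ref{multiplecount}, the same reduction to $E=\sum_{t,\tau,{\bf C}}\tfrac{1}{2t}Q_1Q_2$, and the same diagonal/off-diagonal evaluation of $Q_1Q_2$ (your subset-and-binomial telescoping is just a reorganization of the paper's symmetric-coefficient count, producing the identical coefficients $\tfrac{p!}{2}$ and $\tfrac{p!}{4}$ and hence $\tfrac{p!B}{48}(A+4a+3a^2)$). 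Your plan for $\varepsilon_1$ — isolating the vertices with $C_i=\tfrac{b_i}{2}$ for $p\le 2$ where the hyperelliptic involution identifies prong-matchings and halves the incident arc count, per Remark~\ref{multiplecount} — is exactly the mechanism the paper invokes, so the two arguments coincide.
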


\begin{proof}
First, we assume that $p>2$ or some $b_i$ is odd. For fixed $t>0$, $\tau\in S_{p}$ and ${\bf C}$, there are $\gcd(Q_1,Q_2)$ multi-scale differentials of the form $X(t,\tau,{\bf C},[(u,v)])$. Each of the multi-scale differential has $\lcm(Q_1,Q_2)$ choices of prong-matchings and adjacent to $2\lcm(Q_1,Q_2)$ equatorial arcs. Therefore, 
$$2E=\sum_{t>0}\sum_{\tau}\sum_{\bf C}\frac{1}{2t} 2\gcd(Q_1,Q_2)\lcm(Q_1,Q_2)=2\sum_{t,\tau,{\bf C}} \frac{1}{2t}Q_1Q_2.$$

Note that $$Q_1Q_2=\left[\sum_{i=1}^t C_{\tau(i)}\right]\left[\sum_{j=1}^t b_{\tau(i)}-C_{\tau(i)}\right]=\sum_{k=1}^t \left[C_{\tau(k)}b_{\tau(k)}-C_{\tau(k)}^2\right] + \sum_{1\leq i\neq j\leq t} \left[C_{\tau(i)}(b_{\tau(j)}-C_{\tau(j)})\right].$$

The sum $\sum_{t,\tau}\sum_{k=1}^t \frac{1}{2t}\left[C_{\tau(k)}b_{\tau(k)}-C_{\tau(k)}^2\right]$ is symmetric, so each term $C_kb_k-C_k^2$, for $k=1,\dots,p$, appears the same number of times. Since the sum of the coefficients is equal to $p!\frac{p}{2}$, the sum is equal to $$\frac{p!}{2}\sum_{k=1}^p C_kb_k-C_k^2.$$ By taking the summation over ${\bf C}$, we have $$\frac{p!}{2}\sum_{k=1}^p \frac{b_k^2}{2}B -\frac{b_k(2b_k-1)}{6} B = \frac{p!B}{12}(A+a).$$

Similarly, for $p>1$, the sum $\sum_{t,\tau} \sum_{1\leq i\neq j\leq t} \frac{1}{2t} \left[C_{\tau(i)}(b_{\tau(j)}-C_{\tau(j)})\right]$ is also symmetric, and each term $C_i(b_j-C_j)$, for $1\leq i\neq j\leq p$, appears the same number of times. Since the sum of the coefficients is equal to $p!\sum_t \frac{t-1}{2}=\frac{p!}{4} p(p-1)$, the sum is equal to $$\frac{p!}{4}\sum_{1\leq i\neq j\leq p} C_i(b_j-C_j).$$ By taking the summation over ${\bf C}$, we have $$\frac{p!}{4}\sum_{1\leq i\neq j\leq p}\frac{B}{4} b_i b_j  = \frac{p!B}{16} (a^2-A).$$

Finally, we can obtain the formula for the number of 1-cells:
$$E=\frac{p!B}{48}(A+3a^2+4a).$$

Now we assume that all $b_i$ are even. If $p=1,2$ and $t=1$, set $C_i=\frac{b_i}{2}$ for each $i$. Recall from Remark~\ref{multiplecount} that the hyperelliptic involution sends each prong-matching of $X(1,Id,{\bf C},[(0,0)])$ to itself. That is only half of the arcs adjacent to this vertex are included in the formula above. So the correction term is equal to $\frac{b_1}{4}$. If $p=2$, then the same argument applies to $X(1,(1,2),{\bf C},[(0,0)])$ and we have another term $\frac{b_2}{4}$. So the correction term is equal to $$\varepsilon_1=\frac{a}{4}.$$
\end{proof}

\subsection{Counting 2-cells}\label{sub:2cells}

Now we need to determine how many 0-cells are adjacent to 2-cells of a given type. We can count this by again investigating the neighborhood of 0-cells. By Proposition~\ref{plumbvertical}, a given $X(t,\tau,{\bf C},[(u,v)])\in \partial\overline{\RR}_1(\mu)$ for $t>0$ is contained in the boundary of $2\lcm(Q_1,Q_2)$ 2-cells. We introduce some additional notations related to $X(t,\tau,{\bf C},[(u,v)])$. For $1\leq i\leq t$, we denote $c_i\coloneqq \sum_{j=1}^i C_{\tau(j)}$ and $d_i\coloneqq \sum_{j=1}^i b_{\tau(j)}-C_{\tau(j)}$. 

The $2\lcm(Q_1,Q_2)$ cells correspond to the choice of $\lcm(Q_1,Q_2)$ prong-matchings and the choice of scaling parameter $s\in \mathcal{HS}_-$ (the lower open hemisphere) or $\mathcal{HS}_+$ (the upper open hemisphere), used for the plumbing construction. These choices also determine the types of the cells as follows:

\begin{lem}
Let $\overline{X}=X(t,Id,{\bf C},[(u,v)])\in \partial\overline{\RR}
_1(\mu)$ be a 0-cell, and $(X,\omega)$ be a translation surface obtained by plumbing construction from $\overline{X}$, with a prong-matching $(u,v)$ and the scaling parameter $s\in \C$.

If $(u,v)=(c_i,d_i)$ for some $i$ and $s\in \mathcal{HS}_+$, then $(X,\omega)$ is contained in a cell of cylinder type (quadrilateral type, respectively) if $t=p$ ($1\leq t\leq p-1$).

If $(u,v)=(c_i,d_j)$ for $i\neq j$ and $s\in \mathcal{HS}_+$, then $(X,\omega)$ is contained in a cell of quadrilateral type (2-triangle type, respectively) if $t=p$ ($1\leq t\leq p-1$). 

If $u=c_i$ for some $i$ and $v\neq d_j$ for any $j$, and $s\in \mathcal{HS}_+$, then $(X,\omega)$ is contained in a cell of 1-triangle type. Also, symmetrically, if $v=d_i$ for some $i$ and $u\neq c_j$ for any $j$, and $s\in \mathcal{HS}_+$, then $(X,\omega)$ is contained in a cell of 1-triangle type. 

If $c_i< u< c_{i+1}$ and $d_i< v< d_{i+1}$, then $(X,\omega)$ is contained in a cell of 4-degenerate type. Also, if $u=c_i$ or $c_{i+1}$ and $d_i< u< d_{i+1}$, and $s\in \mathcal{HS}_-$, then $(X,\omega)$ is contained in a cell of 4-degenerate type. Finally, if $v=d_i$ or $d_{i+1}$ and $c_i< u< c_{i+1}$, and $s\in \mathcal{HS}_-$,  then $(X,\omega)$ is contained in a cell of 4-degenerate type.

Otherwise, $(X,\omega)$ is contained in a cell of 3-degenerate type. 
\end{lem}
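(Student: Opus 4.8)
The plan is to analyze the plumbed surface $(X,\omega)$ directly: since each cell is a single $\glplus$-orbit (Section~\ref{sec:Cellular}), it suffices to determine the homeomorphism type of the pair $(X,C(X))$, and by Corollary~\ref{cor:core} together with Figure~\ref{types} this is pinned down by the number $T$ of triangles in the interior of the core, refined in the degenerate case ($T=0$) by the boundary-edge count of each polar domain. So in each case I will compute the core of $(X,\omega)$.

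First I would record the saddle connections of $(X,\omega)$ produced by the plumbing, using the period estimates from the proof of Proposition~\ref{plumbvertical}. The $t$ saddle connections inherited from the top component $X_0$ survive with period $\approx 1$ (the long ones $\beta'_0,\dots,\beta'_{t-1}$, bounding the polar domains of $q_1,\dots,q_t$), while the $p-t+1$ saddle connections inherited from $X_{-1}$ acquire period $\approx s$ (the short ones $\alpha'_1,\dots,\alpha'_{p-t+1}$; the inner short ones bound the polar domains of $q_{t+1},\dots,q_p$, while the two outermost ones descend from the boundaries of the former nodal domains of $s_1,s_2$). The prong-matching $(u,v)$ records, at each node $s_1$ and $s_2$, the angular slot into which the small bottom piece carrying the zero $z$ is glued inside the top picture.

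The combinatorial heart of the argument is to translate $(u,v)$ into this gluing position. I would check that at $s_1$ the condition $u=c_i$ means the bottom piece is inserted exactly along the direction of the long saddle connection $\beta'_i$ (a resonant alignment), while $c_i<u<c_{i+1}$ means it is inserted strictly inside the polar domain of $q_{i+1}$; symmetrically for $v$ against the $d_j$ at $s_2$. With this dictionary the lemma becomes a monotone bookkeeping of the convex hull of $z$, organized by how much alignment occurs. Alignment at both nodes leaves a genuine two-dimensional interior, giving $T=2$, with the finer type read off from whether the two slots match ($i=j$) and whether bottom poles are present ($t<p$): one gets the cylinder type for $i=j,\,t=p$, the quadrilateral type for $i=j,\,t<p$ or $i\ne j,\,t=p$, and the $2$-triangle type for $i\ne j,\,t<p$. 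Alignment at exactly one node leaves a single triangle, giving $T=1$, the $1$-triangle type. Insertion strictly inside polar domains at both nodes collapses the interior, giving $T=0$; I would then check via the edge-count constraint $2p+2=T+\sum\alpha_j$ of Lemma~\ref{lem:coreformula} that the core is $4$-degenerate (edge counts $4+2+\dots+2$) precisely when the two insertions fall in the same polar domain (index $i$ at both nodes) and $3$-degenerate ($3+3+2+\dots+2$) otherwise.

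The main obstacle, and the only place the hemisphere of $s$ genuinely enters, is the borderline alignments: when $u=c_i$ but $v$ is strictly interior (or symmetrically), the surface lies on the equatorial net for $s\in\mathbb{R}$, and the candidate triangle bounded by the aligned long saddle connection and the adjacent short one is resolved only by the first-order tilt $\arg s$ of the short saddle connections. Here I would carry out a local flat-geometry computation near the aligned saddle connection, tracking the $O(s)$ displacement of the endpoints of the short saddle connection, and show that this displacement makes the candidate triangle convex with nonempty interior for exactly one sign of $\Ima s$ and collapses it for the other. This is what sends $s\in\mathcal{HS}_+$ to the $1$-triangle type but $s\in\mathcal{HS}_-$ to the $4$-degenerate type in the mixed cases of the statement, and what distinguishes the remaining borderline configurations; once these signs are settled, the leftover configurations fall into the $3$-degenerate type by elimination.
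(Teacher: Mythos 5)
Your overall route coincides with the paper's: use the plumbing description (as in Proposition~\ref{plumbvertical}) to read off the saddle-connection configuration of $(X,\omega)$ from the prong-matching $(u,v)$, then identify the cell through the topological pair $(X,C(X))$ via Corollary~\ref{cor:core} and the edge counts of Lemma~\ref{lem:coreformula}. Your dictionary ($u=c_i$ meaning resonant alignment with a long saddle connection, $c_i<u<c_{i+1}$ meaning insertion strictly inside a polar domain) and your bookkeeping of the $\mathcal{HS}_+$ cases and of the doubly-interior cases agree with what the paper does by inspection of Figures~\ref{types} and~\ref{plumb}.

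The genuine gap is in where you allow the hemisphere of $s$ to enter. You assert that it matters \emph{only} in the mixed borderline cases ($u=c_i$ aligned, $v$ strictly interior, or symmetrically), and you state unconditionally that ``alignment at both nodes leaves a genuine two-dimensional interior, giving $T=2$.'' That is false for $s\in\mathcal{HS}_-$: when $(u,v)=(c_i,d_j)$ and $s$ is real the surface lies on the equatorial net, and crossing into $\Ima s<0$ does \emph{not} open up a cylinder, quadrilateral or pair of triangles --- by the ``otherwise'' clause of the lemma these surfaces are of 3-degenerate type (consistently, in Section~\ref{sec:Except} an arc such as $DA$ separates a cylinder-type cell from a 3-degenerate one). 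Executed as written, your bookkeeping would misclassify exactly these points. The paper avoids this with a single global observation that your plan is missing: the hemisphere of $s$ is the hemisphere of the projectivized period point (short periods equal to $s$ against long periods pinned at $-1$), i.e.\ it records the sign of the volume, and the cylinder, quadrilateral, 1-triangle and 2-triangle types can only occur over $\mathcal{HS}_+$; hence $s\in\mathcal{HS}_-$ forces a degenerate core in \emph{every} case, after which only the 4- versus 3-degenerate edge count remains to be read off from $(u,v)$. Your local first-order tilt computation could substitute for this, but it must then be run at every aligned saddle connection --- in particular in the doubly-aligned cases --- not only in the mixed ones; likewise your closing ``the leftover configurations are 3-degenerate by elimination'' is not yet an argument, since elimination is only valid once you know that those leftover surfaces are degenerate (the sign argument) and that no polar domain acquires four boundary edges (the edge count).
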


\begin{proof}
The choice of $(u,v)$ determined the configuration of the saddle connections of $(X,\omega)$. For example, the translation surface $(X,\omega)$ is depicted in Figure~\ref{plumb} when $c_{t-1}<u<c_t$ and $d_{j-1} < v < d_j$.

\begin{figure}
    \centering
    \tikzset{every picture/.style={line width=0.75pt}} %set default line width to 0.75pt        

\begin{tikzpicture}[x=0.75pt,y=0.75pt,yscale=-1,xscale=1]
%uncomment if require: \path (0,324); %set diagram left start at 0, and has height of 324

%Shape: Rectangle [id:dp019610757125237743] 
\draw   (109.9,41.08) -- (289.62,41.08) -- (289.62,207.25) -- (109.9,207.25) -- cycle ;
%Curve Lines [id:da2924455729784392] 
\draw    (289.62,207.25) .. controls (271.76,121.06) and (185.13,58.94) .. (109.9,41.08) ;
%Curve Lines [id:da8614669532139325] 
\draw    (289.62,207.25) .. controls (199.57,191.34) and (125.1,131.16) .. (109.9,41.08) ;
%Curve Lines [id:da6780644933483497] 
\draw    (109.9,41.08) .. controls (65.82,72.14) and (66.2,183.18) .. (109.9,207.25) ;
%Curve Lines [id:da5725279913998675] 
\draw    (109.9,207.25) .. controls (157.77,247.24) and (250.49,242.58) .. (289.62,207.25) ;
%Straight Lines [id:da6149346392646131] 
\draw    (109.9,41.08) -- (289.62,207.25) ;
%Curve Lines [id:da8851293238889788] 
\draw    (109.9,41.08) .. controls (32.77,86.12) and (28.21,174.64) .. (109.9,207.25) ;
%Curve Lines [id:da03892439140557036] 
\draw    (109.9,207.25) .. controls (138.4,279.47) and (261.12,279.47) .. (289.62,207.25) ;
\draw   (74.18,126.5) -- (76.84,121.45) -- (79.5,126.5) ;
\draw   (107.24,126.5) -- (109.9,121.45) -- (112.56,126.5) ;
\draw   (286.96,125.72) -- (289.62,120.67) -- (292.28,125.72) ;
\draw   (47.97,125.33) -- (50.63,120.29) -- (53.28,125.33) ;
\draw   (164.33,149.03) -- (162.94,143.47) -- (168.24,145.34) ;
\draw   (193.21,122.24) -- (191.82,116.68) -- (197.12,118.55) ;
\draw   (224.36,105.54) -- (222.98,99.99) -- (228.28,101.86) ;
%Shape: Ellipse [id:dp36440363061946335] 
\draw   (176.01,136.2) .. controls (176.01,135.13) and (176.86,134.26) .. (177.91,134.26) .. controls (178.96,134.26) and (179.81,135.13) .. (179.81,136.2) .. controls (179.81,137.28) and (178.96,138.14) .. (177.91,138.14) .. controls (176.86,138.14) and (176.01,137.28) .. (176.01,136.2) -- cycle ;
%Shape: Ellipse [id:dp37418744927667813] 
\draw   (250.11,78.35) .. controls (250.11,77.28) and (250.96,76.41) .. (252.01,76.41) .. controls (253.06,76.41) and (253.91,77.28) .. (253.91,78.35) .. controls (253.91,79.43) and (253.06,80.3) .. (252.01,80.3) .. controls (250.96,80.3) and (250.11,79.43) .. (250.11,78.35) -- cycle ;
%Shape: Ellipse [id:dp08858419404399398] 
\draw   (140.68,171.92) .. controls (140.68,170.85) and (141.53,169.98) .. (142.58,169.98) .. controls (143.63,169.98) and (144.48,170.85) .. (144.48,171.92) .. controls (144.48,173) and (143.63,173.86) .. (142.58,173.86) .. controls (141.53,173.86) and (140.68,173) .. (140.68,171.92) -- cycle ;
%Shape: Ellipse [id:dp10080043257381677] 
\draw   (89.76,127.27) .. controls (89.76,126.2) and (90.61,125.33) .. (91.66,125.33) .. controls (92.71,125.33) and (93.56,126.2) .. (93.56,127.27) .. controls (93.56,128.35) and (92.71,129.22) .. (91.66,129.22) .. controls (90.61,129.22) and (89.76,128.35) .. (89.76,127.27) -- cycle ;
%Shape: Ellipse [id:dp20668926627498707] 
\draw   (193.11,220.84) .. controls (193.11,219.77) and (193.96,218.9) .. (195.01,218.9) .. controls (196.06,218.9) and (196.91,219.77) .. (196.91,220.84) .. controls (196.91,221.91) and (196.06,222.78) .. (195.01,222.78) .. controls (193.96,222.78) and (193.11,221.91) .. (193.11,220.84) -- cycle ;
\draw   (200.14,204.73) -- (205.08,207.45) -- (200.14,210.17) ;
\draw   (200.9,232.68) -- (205.84,235.4) -- (200.9,238.12) ;
\draw   (201.28,258.31) -- (206.22,261.03) -- (201.28,263.74) ;
\draw   (194.82,38.56) -- (199.76,41.28) -- (194.82,43.99) ;
%Shape: Ellipse [id:dp7180367742363587] 
\draw  [fill={rgb, 255:red, 0; green, 0; blue, 0 }  ,fill opacity=1 ] (287.15,207.25) .. controls (287.15,205.86) and (288.26,204.73) .. (289.62,204.73) .. controls (290.99,204.73) and (292.09,205.86) .. (292.09,207.25) .. controls (292.09,208.65) and (290.99,209.78) .. (289.62,209.78) .. controls (288.26,209.78) and (287.15,208.65) .. (287.15,207.25) -- cycle ;
%Shape: Ellipse [id:dp3217638385040267] 
\draw  [fill={rgb, 255:red, 0; green, 0; blue, 0 }  ,fill opacity=1 ] (287.15,41.08) .. controls (287.15,39.69) and (288.26,38.56) .. (289.62,38.56) .. controls (290.99,38.56) and (292.09,39.69) .. (292.09,41.08) .. controls (292.09,42.48) and (290.99,43.61) .. (289.62,43.61) .. controls (288.26,43.61) and (287.15,42.48) .. (287.15,41.08) -- cycle ;
%Shape: Ellipse [id:dp27686233978305763] 
\draw  [fill={rgb, 255:red, 0; green, 0; blue, 0 }  ,fill opacity=1 ] (107.43,41.08) .. controls (107.43,39.69) and (108.54,38.56) .. (109.9,38.56) .. controls (111.26,38.56) and (112.37,39.69) .. (112.37,41.08) .. controls (112.37,42.48) and (111.26,43.61) .. (109.9,43.61) .. controls (108.54,43.61) and (107.43,42.48) .. (107.43,41.08) -- cycle ;
%Shape: Ellipse [id:dp07340514960704358] 
\draw  [fill={rgb, 255:red, 0; green, 0; blue, 0 }  ,fill opacity=1 ] (107.43,207.25) .. controls (107.43,205.86) and (108.54,204.73) .. (109.9,204.73) .. controls (111.26,204.73) and (112.37,205.86) .. (112.37,207.25) .. controls (112.37,208.65) and (111.26,209.78) .. (109.9,209.78) .. controls (108.54,209.78) and (107.43,208.65) .. (107.43,207.25) -- cycle ;
%Curve Lines [id:da4942155951901799] 
\draw    (94.14,191.84) .. controls (97.14,183.34) and (103.14,181.34) .. (110.14,186.34) ;
%Curve Lines [id:da6032012743078199] 
\draw    (130.14,206.84) .. controls (134.64,212.34) and (134.14,215.34) .. (128.64,218.84) ;
%Curve Lines [id:da1733937005273709] 
\draw    (275.14,193.84) .. controls (268.14,192.84) and (267.14,197.84) .. (269.14,202.34) ;

% Text Node
\draw (208.2,114.38) node [anchor=north west][inner sep=0.75pt]  [rotate=-318.15] [align=left] {...};
% Text Node
\draw (56.02,113.12) node [anchor=north west][inner sep=0.75pt]   [align=left] {...};
% Text Node
\draw (151.63,149.71) node    {$\alpha '_{0}$};
% Text Node
\draw (207.79,96.84) node    {$\alpha '_{j-1}$};
% Text Node
\draw (271.62,126.6) node [anchor=north west][inner sep=0.75pt]    {$\alpha '_{j}$};
% Text Node
\draw (38.68,133.34) node    {$\alpha '_{j}$};
% Text Node
\draw (125.17,134.45) node    {$\alpha '_{t-1}$};
% Text Node
\draw (244.3,183.22) node  [font=\scriptsize]  {$2\pi C_{1}$};
% Text Node
\draw (96.63,170.06) node  [font=\tiny]  {$2\pi C_{t-1}$};
% Text Node
\draw (155.02,215.72) node  [font=\scriptsize]  {$2\pi C_{t+1}$};
% Text Node
\draw (180.77,128.37) node [anchor=north west][inner sep=0.75pt]  [font=\scriptsize]  {$p_{1}$};
% Text Node
\draw (254.99,70.52) node [anchor=north west][inner sep=0.75pt]  [font=\scriptsize]  {$p_{j}$};
% Text Node
\draw (145.56,164.09) node [anchor=north west][inner sep=0.75pt]  [font=\scriptsize]  {$p_{t}$};
% Text Node
\draw (85.2,111.29) node [anchor=north west][inner sep=0.75pt]  [font=\scriptsize]  {$p_{t-1}$};
% Text Node
\draw (196.91,213.01) node [anchor=north west][inner sep=0.75pt]  [font=\scriptsize]  {$p_{t+1}$};
% Text Node
\draw (209.37,242.8) node [anchor=north west][inner sep=0.75pt]  [rotate=-90] [align=left] {...};
% Text Node
\draw (189.29,185.66) node [anchor=north west][inner sep=0.75pt]    {$\beta '_{1}$};
% Text Node
\draw (196.59,272.84) node    {$\beta '_{n+1-t}$};
% Text Node
\draw (177.03,20.16) node [anchor=north west][inner sep=0.75pt]    {$\beta '_{n+1-t}$};
% Text Node
\draw (300.31,203.37) node    {$z$};

\end{tikzpicture}   
    \caption{A translation surface obtained by plumbing with prong-matching $(u,v)$, $c_{t-1}<u<c_t$ and $d_{j-1} < v < d_j$} \label{plumb}
\end{figure}

This certainly gives a cell of 4-degenerate type. Moreover, observe that $s\in \mathcal{HS}_+$(or $\mathcal{HS}_-$) if and only if the periods $\beta'_i$ are contained in $\mathcal{HS}_+$(or $\mathcal{HS}_-$), while the periods of $\alpha_j$ are fixed to be $-1$. That is, in order to obtain a cell of 1-triangle, cylinder, 2-triangle and quadrilateral types, we must have $s\in \mathcal{HS}_+$.

Now it is immediate that each case of $(u,v)$ listed in the lemma gives the corresponding types of cells, depicted in Figure~\ref{types}. 
\end{proof}

Now we can count the number of cells of each type, containing the given 0-cell in their boundary. 

\begin{prop}
Fix $1\leq t\leq p$, $\tau\in S_{p}$ and ${\bf C}$. Consider $\gcd(Q_1,Q_2)$ 0-cells of the form $X(t,\tau,{\bf C},[(u,v)])$. The numbers of cells of each type containing the given $\gcd(Q_1,Q_2)$ 0-cells in their boundary are: 
\begin{itemize}
    \item $t$ cells of cylinder type (quadrilateral type, resp) if $t=p$ ($1\leq t\leq p-1$);
    \item $t^2-t$ cells of quadrilateral type (2-triangle type, resp) if $t=p$ ($1\leq t\leq p-1$);
    \item $t\left(\sum_{i=1}^t b_{\tau(i)} -2t\right)$ cells of 1-triangle type;
    \item $\sum_{i=1}^t \left( 2C_{\tau(i)} (b_{\tau(i)}-C_{\tau(i)})-b_{\tau(i)}+1\right)$ cells of 4-degenerate type.
\end{itemize}
\end{prop}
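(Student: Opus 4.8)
The plan is to extract the type of every $2$-cell incident to the chosen $\gcd(Q_1,Q_2)$ $0$-cells directly from the preceding lemma, and then to count the distinct cells of each type. After reindexing the poles we may assume $\tau=\mathrm{Id}$, so that the lemma applies verbatim. By Proposition~\ref{plumbvertical} each of the $\gcd(Q_1,Q_2)$ $0$-cells $X(t,\mathrm{Id},\mathbf{C},[(u,v)])$ is incident to $2\lcm(Q_1,Q_2)$ local sectors, one for each choice of prong-matching representative and of hemisphere $\mathcal{HS}_{\pm}$; over the whole family these sectors are indexed by all of $\mathbb{Z}/Q_1\mathbb{Z}\times\mathbb{Z}/Q_2\mathbb{Z}\times\{+,-\}$. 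First I would set up the lattice $\mathbb{Z}/Q_1\mathbb{Z}\times\mathbb{Z}/Q_2\mathbb{Z}$ with the marked points $c_1,\dots,c_t$ and $d_1,\dots,d_t$ cutting each factor into $t$ arcs, and rewrite each clause of the lemma as a region of this lattice together with a hemisphere, so that counting cells of a given type becomes a lattice-point count followed by a grouping of the sectors that lie in a common cell.

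For the three non-degenerate types the lattice-point count is immediate, and the point is that each such cell meets the family in exactly one sector. There are $t$ diagonal corners $(c_i,d_i)$, giving the cylinder (if $t=p$) or quadrilateral (if $t<p$) sectors; there are $t^2-t$ off-diagonal corners $(c_i,d_j)$ with $i\neq j$, giving the quadrilateral or $2$-triangle sectors; and there are $t(Q_2-t)+t(Q_1-t)=t\,(\sum_{i=1}^t b_{\tau(i)}-2t)$ points with exactly one marked coordinate, giving the $1$-triangle sectors, where I use $Q_1+Q_2=\sum_{i=1}^t b_{\tau(i)}$. In each of these cases I would verify that the cell is a disk (or monogon) whose only boundary $0$-cell in the family is the given one, so that distinct sectors yield distinct cells with no overcounting; this matches the first three bullets.

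The $4$-degenerate count is the delicate step and the main obstacle. Here I would enumerate the $4$-degenerate sectors from the last three clauses of the lemma: the interiors of the $t$ diagonal boxes $(c_i,c_{i+1})\times(d_i,d_{i+1})$ contribute in both hemispheres, whereas the non-corner box edges contribute only for $s\in\mathcal{HS}_{-}$, and summing gives $2\sum_{i=1}^t C_{\tau(i)}(b_{\tau(i)}-C_{\tau(i)})-2t$ sectors in the generic range. The subtlety, absent for the other types, is that a $4$-degenerate cell is bounded by \emph{two} equatorial arcs (Proposition~\ref{adjacentarcs}) and hence has two boundary $0$-cells; a cell both of whose vertices lie in the family is therefore counted twice by the sector total. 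The hard part will be to identify these doubly-counted cells and to show there are exactly $\sum_{i=1}^t(b_{\tau(i)}-3)$ of them, so that subtracting them from the sector total produces the claimed $\sum_{i=1}^t(2C_{\tau(i)}(b_{\tau(i)}-C_{\tau(i)})-b_{\tau(i)}+1)$. Finally I must treat the degenerate small cases by hand --- small $t$, where some of the $c_i$ or $d_i$ collide; narrow boxes with $C_{\tau(i)}=1$ or $b_{\tau(i)}-C_{\tau(i)}=1$; and low $p$, where some cell types do not exist (for instance there are no $3$-degenerate cells when $p=1$, so the lemma's ``otherwise'' sectors must be reassigned to the $4$-degenerate type) --- since this is exactly where the sector-to-cell bookkeeping is most error-prone.
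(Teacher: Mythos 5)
Your first step --- enumerating prong-matchings $(u,v)$ and hemispheres according to the preceding (unlabeled) classification lemma --- is exactly what the paper's proof does, and your totals for the first three bullets coincide with it. The genuine gap is in your treatment of the fourth bullet. The numbers in this proposition are not counts of \emph{distinct} cells; they are counts of \emph{incidences}, i.e.\ of corners of cells of each type at the $\gcd(Q_1,Q_2)$ given $0$-cells. This is forced by how the proposition is used in the proof of Proposition~\ref{facecount}: the formulas here are summed over all $(t,\tau,{\bf C})$ to produce the total corner counts $K_q$, $K_4$, etc., and the passage from corners to cells is made \emph{there}, by the coefficients in $F=K_{cyl}+\frac14 K_q+\frac12 K_4+\frac13(K_3+K_{1t}+K_{2t})$. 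In particular, the fact that a $4$-degenerate cell has two corners (Proposition~\ref{adjacentarcs}) is already absorbed into the factor $\frac12$ and must not be corrected a second time inside the present count. Your proposed ``hard part'' --- identifying $\sum_i(b_{\tau(i)}-3)$ doubly-counted cells and subtracting them --- therefore aims at the wrong statement. The same misreading already sinks your second bullet: in $\PP\RR_1(5,-2,-3)$ the unique quadrilateral cell has \emph{two} of its four corners at the single $0$-cell of the $t=2$ family, so under a ``distinct cells'' reading that bullet would give $1$, not $t^2-t=2$; your claimed verification that for the non-degenerate types ``distinct sectors yield distinct cells with no overcounting'' cannot succeed.

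Moreover, the subtraction cannot even be performed formally: when some $b_{\tau(i)}=2$ the quantity $\sum_i(b_{\tau(i)}-3)$ is negative, and the proposition's value then \emph{exceeds} your sector total (for $\mu=(5,-2,-3)$, $t=2$, ${\bf C}=(1,1)$, the lemma yields $2$ four-degenerate sectors while the proposition asserts $3$), so no merging of cells can close the gap --- it points in the wrong direction. The true source of the mismatch is that the classification lemma, taken literally, is itself off near the lattice points with both coordinates marked: it counts \emph{both} vertical and \emph{both} horizontal edges of each box with $s\in\mathcal{HS}_-$ and assigns every corner sector $(c_i,d_j)\times\mathcal{HS}_-$ to the $3$-degenerate type. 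Consistency (integrality of the resulting cell counts, and the case $p=1$, which you rightly flag, where no $3$-degenerate cells exist at all) forces instead the per-box count
\begin{equation*}
2\bigl(C_{\tau(i)}-1\bigr)\bigl(b_{\tau(i)}-C_{\tau(i)}-1\bigr)+\bigl(C_{\tau(i)}-1\bigr)+\bigl(b_{\tau(i)}-C_{\tau(i)}-1\bigr)+1
=2C_{\tau(i)}\bigl(b_{\tau(i)}-C_{\tau(i)}\bigr)-b_{\tau(i)}+1,
\end{equation*}
i.e.\ one edge of each kind plus one corner sector per box, which summed over $i$ is exactly the stated formula. So the correct repair is a finer analysis of the plumbed surfaces for the edge and corner prong-matchings with $s\in\mathcal{HS}_-$ (together with the degenerate cases $t=1$ and $p=1$), not a sectors-to-cells correction. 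For what it is worth, the paper's own proof is just your raw lattice count and its final summation does not match its own displayed terms either, so your instinct that something is inconsistent here was sound; but the proposed mechanism is not the right one, and as planned the proof would fail.
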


\begin{proof}
We need to count possible choices of $(u,v)$ for each case. If $(u,v)=(c_i,d_i)$ for some $i$, there are $t$ such choices because $1\leq i\leq t$. If $(u,v)=(c_i,d_j)$ for $i\neq j$, there are $t(t-1)$ choices for the pair $(i,j)$. This gives the first two cases in the proposition. 

Now we count the cells of 1-triangle type. There are $t$ choices for $u=c_i$, and $Q_2-t$ choices for $v\neq d_j$ for any $j$. So we have $t(Q_2-t)$ pairs. Similarly, there are $t(Q_1-t)$ choices of pairs such that $v=d_i$ for some $i$ and $u\neq d_j$ for any $j$. Thus we have $t(Q_1+Q_2-2t)=t\left(\sum_{i=1}^t (b_{\tau(i)} -2t\right)$ cells of 1-triangle type. 

Finally, we count the cells of 4-degenerate type. For each $1\leq i\leq t$, there are $(C_{\tau(i)}-1)(b_{\tau(i)}-C_{\tau(i)}-1)$ possible choices of $(u,v)$ such that $c_{i-1}< u< c_i$, $d_{i-1}< u< d_i$. This gives $2\left[(C_{\tau(i)})(b_{\tau(i)}-C_{\tau(i)}) -b_{\tau(i)}+1\right]$ cells. Also, for $u=c_{i-1}$ or $c_i$, there are $b_{\tau(i)}-C_{\tau(i)}-1$ choices for $v$. This gives $2(b_{\tau(i)}-C_{\tau(i)}-1)$ cells. Finally, for $v=d_{i-1}$ or $d_i$, there are $C_{\tau(i)}-1$ choices for $u$. This gives $2(C_{\tau(i)}-1)$ cells. By taking summation over $i$, we have $\sum_{i=1}^t 2C_{\tau(i)} (b_{\tau(i)}-C_{\tau(i)})-t$ cells of 4-degenerate type. 
\end{proof}

Observe that the number of the cells of 3-degenerate type can be computed by subtracting the number of the other cells from $2\lcm(Q_1,Q_2) \gcd(Q_1,Q_2)=2 Q_1Q_2$. 

By combining the above relations and Corollary~\ref{edgecount}, we have

\begin{prop}\label{facecount}
The number of 2-cells $F$ is equal to 
\begin{equation}
    F=\frac{p!B}{24}(A+a^2+10)+\varepsilon_2+\eta.
\end{equation}

where the correction term $\varepsilon_2$ is given by
    
\begin{equation}
\varepsilon_2=
    \begin{cases}
    \frac{a+1}{4} & \text{if } p=1 \text{ and } a\in 2\mathbb{N}^{\ast} \\
    \frac{a-1}{2} & \text{if } p=2 \text { and } b_1,b_2\in 2\mathbb{N}^{\ast} \\ 
    0 & \text{otherwise }
    \end{cases}
\end{equation}
\begin{comment}
    
\end{comment}

while the orbifold term $\eta$ is given by 
\begin{equation}
\eta=\begin{cases}
     \frac{1}{2} & \text{if } p=1 \text{ and } a\in 6\mathbb{N}+2, 6\mathbb{N}+2 \\
     \frac{2}{3} & \text{if } p=1 \text{ and } a\in 6\mathbb{N}+3 \\
     \frac{5}{6} & \text{if } p=1 \text{ and } a\in 6\mathbb{N}^{\ast}  \\
     \frac{2}{3} & \text{if } p=2 \text{ and } b_1,b_2\in 3\mathbb{N}^{\ast}  \\ 
     \frac{2}{3} & \text{if } p=2 \text{ and } b_1,b_2\in 3\mathbb{N}+2  \\ 
     0 & \text{otherwise. }
     \end{cases}
\end{equation}
\end{prop}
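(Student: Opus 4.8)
The plan is to pin down $F$ from the edge–face incidence relation, feeding in the edge count of Corollary~\ref{edgecount} together with the per-family cell counts established in the proposition just above. Write $F_{\mathrm{cyl}},F_{4\mathrm{deg}},F_{\mathrm{quad}},F_{1\mathrm{tri}},F_{2\mathrm{tri}},F_{3\mathrm{deg}}$ for the numbers of $2$-cells of each type, so that $F=F_{\mathrm{cyl}}+F_{4\mathrm{deg}}+F_{\mathrm{quad}}+F_{1\mathrm{tri}}+F_{2\mathrm{tri}}+F_{3\mathrm{deg}}$. Since each equatorial arc lies on the boundary of exactly two $2$-cells, summing the number of boundary arcs (read off from Proposition~\ref{adjacentarcs}) over all cells counts every $1$-cell twice; away from exceptionally symmetric cells this gives
\[
2E=F_{\mathrm{cyl}}+2F_{4\mathrm{deg}}+4F_{\mathrm{quad}}+3\bigl(F_{1\mathrm{tri}}+F_{2\mathrm{tri}}+F_{3\mathrm{deg}}\bigr).
\]

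The key observation is that the three types $1$-triangle, $2$-triangle and $3$-degenerate all have exactly three boundary arcs, hence enter this relation only through the combination $F_{1\mathrm{tri}}+F_{2\mathrm{tri}}+F_{3\mathrm{deg}}$. Eliminating that combination between the incidence relation and the defining sum for $F$ yields
\[
F=\tfrac{2}{3}E+\tfrac{2}{3}F_{\mathrm{cyl}}+\tfrac{1}{3}F_{4\mathrm{deg}}-\tfrac{1}{3}F_{\mathrm{quad}}.
\]
In particular only the three counts $F_{\mathrm{cyl}},F_{4\mathrm{deg}},F_{\mathrm{quad}}$ need to be evaluated, and the harder-to-enumerate $3$-degenerate cells never have to be counted directly; this is exactly the step ``combining the above relations and Corollary~\ref{edgecount}''.

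To compute these three counts I would convert the per-family numbers of the preceding proposition into global cell counts. For a fixed $(t,\tau,{\bf C})$ that proposition records the number of incidences between $2$-cells of a given type and the $\gcd(Q_1,Q_2)$ associated $0$-cells; summing these over all $(t,\tau,{\bf C})$ with weight $\tfrac{1}{2t}$, which undoes the $2t$-fold redundancy of the description $X(t,\tau,{\bf C},[(u,v)])$ exactly as in the counts of $V$ and $E$, produces $\sum_v \#\{\text{type-}T\text{ cells at }v\}=n_TF_T$, where $n_T$ is the generic number of boundary arcs. For the cylinder type only $t=p$ contributes, giving $F_{\mathrm{cyl}}=\sum_{\tau,{\bf C}}\tfrac{1}{2p}\cdot p=\tfrac{p!B}{2}$. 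The quadrilateral contributions come from $t=p$ (value $t^2-t$) and from $1\le t\le p-1$ (value $t$), and summing gives $4F_{\mathrm{quad}}=(p-1)p!B$, so $F_{\mathrm{quad}}=\tfrac{(p-1)p!B}{4}$. For the $4$-degenerate type, summing $\sum_{i=1}^{t}\bigl(2C_{\tau(i)}(b_{\tau(i)}-C_{\tau(i)})-b_{\tau(i)}+1\bigr)$ first over ${\bf C}$ turns the $i$-th summand into $\tfrac{B}{3}(b_{\tau(i)}^2-2b_{\tau(i)}+3)$, and then the symmetric-sum argument of Corollary~\ref{edgecount} (each index appearing with total weight $\tfrac{p!}{2}$) gives $2F_{4\mathrm{deg}}=\tfrac{p!B}{6}(A-2a+3p)$, i.e. $F_{4\mathrm{deg}}=\tfrac{p!B}{12}(A-2a+3p)$. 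Substituting these three values and the main term of $E$ into the displayed expression for $F$ and clearing denominators collapses everything to $\tfrac{p!B}{24}(A+a^2+10)$, the claimed leading term.

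It remains to account for the correction terms $\varepsilon_2$ and $\eta$, and this is where the real work lies. The term $\varepsilon_2$ comes from the hyperelliptic strata (all $b_i$ even, $p\le 2$): it is fed partly by the correction $\varepsilon_1$ of Corollary~\ref{edgecount} through $\tfrac{2}{3}E$, and partly by the fact that the hyperelliptic involution identifies prong-matchings on the special families with $C_i=\tfrac{b_i}{2}$ (Remark~\ref{multiplecount}), which modifies $F_{\mathrm{cyl}},F_{4\mathrm{deg}},F_{\mathrm{quad}}$. The term $\eta$ comes entirely from the exceptionally symmetric cells classified in Proposition~\ref{prop:ExcSymCel}: such an orbifold cell is bounded by a \emph{single} equatorial arc (Proposition~\ref{adjacentarcs}) rather than by the $n_T$ arcs of a generic cell of its type, so the incidence-to-cell conversion (division by $n_T$) undercounts it and the discrepancy must be restored; an order-$2$ cell ($4$-degenerate, $p=1$, $a$ even) contributes $\tfrac{1}{2}$ while an order-$3$ cell ($1$-triangle for $p=1$ with $3\mid a$, or $3$-degenerate for $p=2$) contributes $\tfrac{2}{3}$. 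The main obstacle is precisely this special-cell bookkeeping: one must verify that the edge count and the weighted incidence sums are corrected consistently, and in particular resolve the interaction when an order-$2$ and an order-$3$ orbifold point coexist (i.e. $p=1$, $6\mid a$), where the naive sum $\tfrac12+\tfrac23$ overshoots the correct value $\tfrac{5}{6}$ and a compensating term must be identified. The generic computation above is routine symmetric-sum algebra; certifying the exact fractions of $\varepsilon_2$ and $\eta$ case by case is the delicate part.
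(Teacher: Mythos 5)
Your derivation of the leading term is correct and is, up to notation, exactly the paper's own argument: the paper works with corner counts $K_T$ (so that $K_{cyl}=F_{\mathrm{cyl}}$, $K_4=2F_{4\mathrm{deg}}$, $K_q=4F_{\mathrm{quad}}$), derives $F=\tfrac{2}{3}E+\tfrac{2}{3}K_{cyl}+\tfrac{1}{6}K_4-\tfrac{1}{12}K_q$ by the same elimination of the three-arc types, and evaluates the same three weighted sums with the same outcomes $\tfrac{1}{2}p!B$, $(p-1)p!B$ and $\tfrac{p!B}{6}(A-2a+3p)$. Your description of the two correction mechanisms also matches the paper's proof: $\varepsilon_2$ comes from the $0$-cells with $C_i=b_i/2$ where the $\tfrac{1}{2t}$-weighting miscounts because of the identifications of Remark~\ref{multiplecount}, and $\eta$ restores, for each exceptionally symmetric cell of Proposition~\ref{prop:ExcSymCel} (bounded by a single arc by Proposition~\ref{adjacentarcs}), the deficit $1-\tfrac{1}{n}$ where $n$ is the generic number of boundary arcs of its type: $\tfrac12$ for the order-$2$ cell and $\tfrac23$ for an order-$3$ cell.

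The one place where your plan would fail is the step you single out as the delicate one: you propose to hunt for a ``compensating term'' in the case $p=1$, $6\mid a$, so as to land on the stated value $\eta=\tfrac56$ rather than the additive $\tfrac12+\tfrac23=\tfrac76$. No such term exists, and the search cannot succeed: the order-$2$ cell lies in the hyperelliptic component of rotation number $a/2$ while the order-$3$ cell lies in the component of rotation number $a/3$, so the two corrections occur in disjoint connected components and are genuinely additive. The correct value is $\tfrac76$, and the stated $\tfrac56$ is an error in the proposition (the paper's own proof, which says ``Also if $a\in 2\mathbb{N}^{\ast}$\dots'', is additive). You can verify this concretely for $a=6$: the stated constants give $F=\tfrac{205}{12}+\tfrac74+\tfrac56=\tfrac{59}{3}$, which is not an integer, whereas the three components of $\PP\overline{\RR}_1(6,-6)$ compactify to the rational curves $X_1(6)$, $X_1(3)$, $X_1(2)$, so $\chi=6$; since $V=5$ and $E=19$ by the earlier counts, one gets $F=\chi+E-V=20$, i.e.\ $\eta=\tfrac76$ (and indeed a direct corner count gives $K_{cyl}=3$, $K_{1t}=10$, $K_4=25$, confirming both the main term plus $\varepsilon_2=\tfrac74$ and the additive orbifold correction). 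So carry out the case-by-case bookkeeping exactly as you set it up --- it is the paper's computation --- and conclude additivity; the discrepancy you flagged is a misprint in the statement (as is ``$6\mathbb{N}+2,6\mathbb{N}+2$'', which should read ``$6\mathbb{N}+2,6\mathbb{N}+4$''), not an interaction to be explained. None of this affects Theorems~\ref{main2} and~\ref{main2+}, which only concern $p\geq 2$, where at most one type of orbifold point occurs.
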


\begin{proof}
Each cell is a topological disk (possibly punctured in the case of a chamber of cylinder type) bounded by finitely many boundary arcs that meet at (inner) corners of the cell. For each type of cell, we count the total number of corners. We denote the number of corners of cylinder, quadrilateral, 4-degenerate, 3-degenerate, 1-triangle, and 2-triangle type by $K_{cyl}$, $K_q$, $K_4$, $K_3$, $K_{1t}$ and $K_{2t}$, respectively. First, assume that $p>2$ or some $b_i$ is odd. Also assume that $\RR_1(\mu)$ does not have 2-cells with orbifold points. 

By Proposition~\ref{adjacentarcs}, the number of 2-cells is equal to 

\begin{equation}\label{face}
    F=K_{cyl}+\frac{1}{4}K_q+\frac{1}{2}K_4+\frac{1}{3}(K_3+K_{1t}+K_{2t}).
\end{equation}
Also, in terms of the corners, the number of 1-cells is equal to \begin{equation}
    E=\frac{1}{2}(K_{cyl}+K_q+K_4+K_3+K_{1t}+K_{2t}).
\end{equation}

So $3F-2E=2K_{cyl} - \frac{1}{4}K_q + \frac{1}{2}K_4$ and $F=\frac{2}{3}E+\frac{2}{3}K_{cyl}-\frac{1}{12}K_q+\frac{1}{6}K_4$. We can compute each term in the formula as follows:

$$K_{cyl}=\sum_{t,\tau,{\bf C}}\frac{1}{2t}t  = \frac{1}{2}p!B;$$

$$K_q=\sum_{\tau,{\bf C}}\left[\frac{1}{2p}(p^2-p)+\sum_{t=1}^{p-1}\frac{1}{2t} t\right] =(p-1)p!B;$$

$$K_4=\sum_{t,\tau,{\bf C}}\frac{1}{2t} \left[\sum _{i=1}^{t}2C_{\tau(i)} (b_{\tau(i)}-C_{\tau(i)})-b_{\tau(i)}+1 \right].$$
As in the proof of Corollary~\ref{edgecount}, we use the symmetry of this summation. In $\sum_{t,\tau}\frac{1}{2t}\left[\sum _{i=1}^{t}2C_{\tau(i)} (b_{\tau(i)}-C_{\tau(i)})\right]$, each term $C_i(b_i-C_i)$, for $i=1,\dots, p$, appears the same number of times. So it is equal to $p!\sum_{i=1}^p C_i(b_i-C_i)$. Similarly, in $\sum_{t,\tau}\frac{1}{2t} b_{\tau(i)}$, each form $b_i$, for $i=1,\dots,p$, appears the same number of times. So it is equal to $\frac{p!}{2}a$. By taking summation over ${\bf C}$, we obtain $$K_4=\frac{p!B}{6}(A-2a+3p).$$

Now we take care of the cases where $p=1,2$ and all $b_i$ are even, or $\RR_1(\mu)$ has a cell with an orbifold point (see Proposition~\ref{prop:ExcSymCel}). There are two correction terms $\varepsilon_2$ and $\varepsilon'_2$. The first one comes from the hyperelliptic involutions and the second one comes from the cells containing orbifold points. 

First, we compute the correction term $\varepsilon_2$. We will use the formula~\ref{face}. Recall from Remark~\ref{multiplecount} that for certain 0-cells, only half of the prong-matchings contribute to the above formula. Assume $C_i=\frac{b_i}{2}$.

Suppose $p=1$. We need to take care of the multi-scale differential $X(1,Id,{\bf C},[(0,0)])$. Around this, there is one cell of cylinder type and $a-1$ cells of 4-degenerate type. So the correction term is equal to $$\varepsilon_2=\frac{1}{2}\left(1+\frac{1}{2}(a-1)\right)=\frac{a+1}{4}.$$

Suppose $p=2$. Around the multi-scale differential $X(1,Id,{\bf C},[(0,0)])$, there are two cells of quadrilateral type and $b_1-2$ cells of 4-degenerate type. Hence we have a correction term $\frac{1}{2}\left(\frac{2}{4}+\frac{1}{2}(b_1-2)\right)=\frac{b_1-1}{4}$. The same argument applies to $X(1,(1,2),{\bf C},[(0,0)])$, and this gives another term $\frac{b_2-1}{4}$. So the correction term is equal to $$\varepsilon_2=\frac{a-1}{2}.$$

Now we compute the second correction term $\varepsilon'_2$. Suppose $p=1$. If $a\in 3\mathbb{N}^{\ast}$, then there exists one 2-cell of 1-triangle type with an orbifold point of order 3. This cell is bounded by only one equatorial arc, so we need a correction term $\frac{2}{3}$. Also if $a\in 2\mathbb{N}^{\ast}$, then there exists one 2-cell of 4-degenerate type with an orbifold point of order 2. This cell is also bounded by only one equatorial arc, so we need a correction term $\frac{1}{2}$. Suppose $p=2$. If $b_1,b_2\in 3\mathbb{N}^{\ast}$ or $b_1,b_2\in 3\mathbb{N}+2$, then there exists one 2-cell of 3-degenerate type with an orbifold point of order 3. This cell is bounded by only one equatorial arc, so we need a correction term $\frac{2}{3}$. 
\end{proof}

Finally, using the cellular decomposition, we can compute the Euler characteristic of $\PP\overline{\RR}_1(\mu)$.

\begin{equation}
\chi \left(\PP\overline{\RR}_1(\mu) \right) = F-E+V+(\varepsilon_0-\varepsilon_1+\varepsilon_2+\eta)=\frac{p!B}{48}(A-a^2-4a+20)+\sum_{t,\tau,{\bf C}} \frac{1}{2t}\gcd(Q_1,Q_2)+\varepsilon+\eta
\end{equation}

where $\varepsilon\coloneqq \varepsilon_0-\varepsilon_1+\varepsilon_2.$ In particular, if $p\geq 3$, then $\eta=0$ and $\sum_{t,\tau,{\bf C}} \frac{1}{2t}\gcd(Q_1,Q_2)+\varepsilon$ counts the number of non-cusp points in the boundary. This proves Theorem~\ref{main2+}.

\begin{rmk}
If the stratum $\PP\RR_1(\mu)$ is connected, which is true for general $\mu$ (see Theorem~1.7 of \cite{Lee}), then Theorem~\ref{main2} gives the genus formula for the stratum: $$g(\PP\overline{\RR}_1(\mu)) =1- \frac{1}{2}\chi\left(\PP\overline{\RR}_1(\mu) \right).$$ Otherwise, the formula gives the sum of the Euler characteristics for all connected components of $\PP\overline{\RR}_1(\mu)$. In order to compute the Euler characteristic of each connected component, we need to classify 0,1 and 2-cells in terms of rotation number and ramification profile. This is not a hard task for a given $\mu$, but no reasonably short general formula seems to hold for every $\mu$.
\end{rmk}

\subsection{The orbifold Euler characteristic}

Since we have classified the orbifold points of $\PP\overline{\RR}_1(\mu)$, we can easily compute the orbifold Euler characteristic $\chi^{orb} \left(\PP\overline{\RR}_1(\mu) \right)$ now. The orbifold Euler characteristic accounts for the local group action by weighting contributions from the orbifold points. So in the alternating sum, the contribution of an orbifold point of order $k$ is equal to $\frac{1}{k}$. 

If $p\geq 3$, then $\PP\overline{\RR}_1(\mu)$ has no orbifold point and has exactly $K_{cyl}$ cusp (one in each cell of cylinder type). So $\chi^{orb} \left(\PP\overline{\RR}_1(\mu) \right)=\chi \left(\PP\overline{\RR}_1(\mu) \right)-K_{cyl}$. The case $p=1$ is already described in \cite{Ta1}, we focus on the case $p=2$ here.
For each orbifold point of finite order $k$, we need to subtract $\frac{k-1}{k}$ from the usual Euler characteristic. However, it is immediate that the orbifold term $\eta$ from Proposition~\ref{facecount} exactly computes the amount we have to subtract. Also, each cell of cylinder type contains one cusp. So we need to further subtract $K_{cyl}$. Recall that $K_{cyl}=\frac{1}{2}p!B=(b_1-1)(b_2-1)$. Therefore, the orbifold Euler characteristic $\chi^{orb} \left(\PP\overline{\RR}_1(\mu) \right)$ is equal to 

\begin{equation}
-\frac{(b_1-1)(b_2-1)}{12}(b_1 b_2+2(b_1+b_2)+2)+\sum_{t,\tau,{\bf C}} \frac{1}{2t}\gcd(Q_1,Q_2)+\varepsilon
\end{equation} and this proves Theorem~\ref{main2}.

\begin{rmk}
When $p=2$ and $a=b_1+b_2$ is prime, we can obtain a rather simpler formula for the orbifold Euler characteristic. Note that for any $1\leq c_1\leq b_1-1$ and $1\leq c_2\leq b_2-1$, we have $\gcd(c_1+c_2,a)=1$. So
$$\sum_{\substack{1 \leq c_{1} \leq b_{1}-1 \\ 1 \leq c_{2} \leq b_{2}-1\phantom{-}}}  \gcd \left( c_{1}+c_{2}, a \right) =(b_1-1)(b_2-1).$$
Also, $\sum_{c_1=1}^{b_1-1} \gcd(c_1,b_1)+\sum_{c_2=1}^{b_2-1} \gcd(c_2,b_2)=\sum_{d|b_1} d\phi(\frac{b_1}{d}) + \sum_{e|b_2} e\phi(\frac{b_2}{e})$. Finally, we have $$\chi^{orb} \left(\PP\overline{\RR}_1(\mu) \right)=-\frac{(b_1-1)(b_2-1)}{12}(b_1 b_2+2(b_1+b_2)-4)+\frac{1}{2}\sum_{d|b_1} d\phi(\frac{b_1}{d}) + \frac{1}{2}\sum_{e|b_2} e\phi(\frac{b_2}{e}).$$
\end{rmk}

\section{Degree of the forgetful map $\PP\RR_1(\mu)\to \mathcal{M}_{1,1}$}\label{sec:Degree}

Let $(X,\omega)\in \RR_1(\mu)=\RR_1(a,-b_1,\dots, -b_p)$. By forgetting $\omega$ but remembering the unique zero $z$ of order $a=b_1+\dots+b_p$, we obtain a pointed elliptic curve $(X,z)\in \mathcal{M}_{1,1}$. This gives a forgetful map $\pi:\PP\RR(\mu)\to \mathcal{M}_{1,1}$. Since both $\PP\RR(\mu)$ and $\mathcal{M}_{1,1}$ are Riemann surfaces (with orbifold points), the map $\pi$ extends to $\PP\RR_1(\mu)\to \overline{\mathcal{M}}_{1,1}$, also denoted by $\pi$. This is a ramified covering of $\overline{\mathcal{M}}_{1,1}\cong \mathbb{CP}^1$. The (orbifold) degree of $\pi$ for some special cases is computed in \cite{CG} and \cite{EGMP}. We can generalize their argument to compute the orbifold degree of $\pi$ for an arbitrary $\mu$ in terms of $a=b_{1}+\dots+b_{p}$ and $B=\prod_{i} (b_i-1)$. 

\begin{thm}\label{thm:degree}
The degree of the forgetful map $\pi:\PP\RR_1(\mu)\to \mathcal{M}_{1,1}$ is equal to $(a+1) p!B.$
\end{thm}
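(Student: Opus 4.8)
The plan is to compute the degree as the cardinality of a generic fiber. Since $\pi$ is a finite map of complex curves, its degree equals the number of points of $\PP\RR_1(\mu)$ lying over a generic pointed elliptic curve $(X,z)\in\mathcal{M}_{1,1}$. Fixing $X=\C/\Lambda$ with $z=0$, a point of the fiber is a residueless differential $\omega$ (up to scaling) with a zero of order $a$ at $0$ and labeled poles $q_1,\dots,q_p$ of orders $b_1,\dots,b_p$. Writing $\omega=f\,dw$, the divisor of $f$ is $a\cdot 0-\sum_i b_i q_i$; by Abel's theorem such an $f$ exists precisely when $\sum_i b_i q_i=0$ in the group law of $X$, and it is then unique up to scale, namely $f(w)=c\,\sigma(w)^a/\prod_i\sigma(w-q_i)^{b_i}$. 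So the first step is to reduce the count to configurations $(q_1,\dots,q_p)$ satisfying this single Abel equation together with the $p$ residueless conditions, each configuration contributing exactly one point of the fiber.

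Next I would make the residueless conditions explicit. Since $\sigma'/\sigma=\zeta$, the logarithmic derivative of $f$ is the elliptic function $a\zeta(w)-\sum_j b_j\zeta(w-q_j)$, and the vanishing of the residue of $f$ at $q_i$ becomes the vanishing of the $(b_i-1)$-st Taylor coefficient at $q_i$ of the regular factor $\sigma(w)^a/\prod_{j\ne i}\sigma(w-q_j)^{b_j}$, i.e. a universal polynomial in the values $\zeta(q_i)$, $\zeta(q_i-q_j)$ and their derivatives. Imposing the Abel equation cuts the $p$-dimensional space of pole configurations down to dimension $p-1$, and the $p$ residue conditions (which satisfy one linear relation from the residue theorem) cut it down to dimension $0$. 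Thus over a generic base point the fiber is finite and reduced, and the degree is simply its cardinality.

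The main obstacle is the actual evaluation of this zero-dimensional count to the closed form $(a+1)\,p!\,B$. My plan for this is to degenerate the base curve $(X,0)$ to the cusp of $\overline{\mathcal{M}}_{1,1}$, i.e. to the nodal cubic: by properness and flatness of $\pi$ the length of the fiber is preserved, while on the nodal cubic $\C^{*}$ the functions $\sigma,\zeta,\wp$ degenerate to rational/trigonometric functions, so that the Abel equation becomes a multiplicative relation $\prod_i\xi_i^{b_i}=1$ among the pole positions $\xi_i\in\C^{*}$ and the residue conditions become vanishing of explicit Laurent coefficients. In this rational geometry I expect the solution count to factor and to organize according to the combinatorial description of the boundary in Lemma~\ref{genus1lemma}: the orderings $\tau\in S_p$ together with the admissible tuples $\mathbf{C}=(C_1,\dots,C_p)$ with $1\le C_i\le b_i-1$ supply exactly $p!\,B$ discrete data, and I would show that each such datum contributes a local multiplicity equal to $a+1$ (reflecting the $a+1$ cone-angle sectors, or prongs, at the zero of order $a$). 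Summing gives $(a+1)\,p!\,B$.

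As a consistency check guiding the computation, in the case $p=1$ (where $\mu=(a,-a)$ and the residueless condition is automatic by the residue theorem) the count reduces to the nonzero $a$-torsion points $q$ with $a\cdot q=0$, of which there are $a^2-1=(a+1)(a-1)$; since $p!=1$ and $B=a-1$, this matches $(a+1)\,p!\,B$. The delicate points to control in the general argument are the multiplicities in the degenerate fiber (ensuring no solutions escape into the node or collide with the zero) and the precise identification of the local multiplicity $a+1$, for which the prong-matching data attached to the boundary strata in Section~\ref{sec:Multiscale} should provide the necessary bookkeeping.
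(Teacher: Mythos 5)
Your overall strategy---extend $\pi$ to the compactifications and compute its degree as the length of the fiber over the cusp $\infty\in\overline{\mathcal{M}}_{1,1}$---is exactly the paper's, and your dimension count and your $p=1$ sanity check are correct. The genuine gap is the final multiplicity claim: that the fiber over the cusp organizes into the $p!\,B$ data $(\tau,\mathbf{C})$, each contributing local multiplicity exactly $a+1$. This is false whenever $p\geq 2$ and the $b_i$ are not all equal. The actual fiber has two kinds of points. First, differentials on the nodal cubic itself (the horizontal points $X(0,\tau,\mathbf{C})$ of Lemma~\ref{genus1lemma}): there are $p!\,B$ of these, and $\pi$ is \emph{unramified} at each. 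Second, two-level multi-scale differentials supported on the banana curve (the vertical points $X(t,\tau,\mathbf{C},[(u,v)])$, $t>0$), which are precisely the limits in which some pole positions escape into the node; the degeneration your plan hopes to exclude (``ensuring no solutions escape into the node'') is in fact unavoidable, and it is where all of the ramification sits. At such a point the local degree of $\pi$ is $(Q_1+Q_2)/\gcd(Q_1,Q_2)$, computed in the paper from the local equations $x_1y_1=u^{a_1}$ and $x_2y_2=cu^{a_2}$ at the two nodes via Lemma~5.1 of \cite{CG}; it depends on $\mathbf{C}$, the number of such points is the number $\gcd(Q_1,Q_2)$ of prong-matching classes, and the count must further be corrected by the $t$-fold overcounting of Remark~\ref{multiplecount} and by labeling the two branches of the node to resolve the orbifold structure of $\overline{\mathcal{M}}_{1,1}$ at $\infty$. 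Grouping all of this by $(\tau,\mathbf{C})$ gives, for $p=2$ and fixed $\tau$, the total $1+b_{\tau(1)}+\tfrac{a}{2}$ (the horizontal point, plus the $t=1$ points contributing $\gcd(Q_1,Q_2)\cdot\frac{b_{\tau(1)}}{\gcd(Q_1,Q_2)}=b_{\tau(1)}$, plus the $t=2$ points contributing $\tfrac12(Q_1+Q_2)=\tfrac{a}{2}$ after the symmetry factor), which equals $a+1$ only if $b_{\tau(1)}=a/2$. The formula $(a+1)p!\,B$ emerges only after summing over all $t$, $\tau$, $\mathbf{C}$ and prong-matchings: $a+1$ per datum is an average, not a pointwise multiplicity. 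Note that $p=1$ is misleading here---in that case the pointwise claim does hold ($1+a$ per choice of $C_1$), which is why your consistency check did not expose the issue.

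Relatedly, your heuristic that the factor $a+1$ reflects the $a+1$ prongs at the zero of order $a$ points at the wrong place: the ramification is governed by the prongs at the two nodes of the banana curve ($Q_1$ and $Q_2$ of them), not by the cone angle at the zero, and $a+1$ decomposes as $1+a$, with $1$ the unramified horizontal contribution per $(\tau,\mathbf{C})$ and $a\,p!\,B$ the \emph{total} vertical contribution. So what remains to be supplied is precisely the core of the paper's argument: the identification of every limit point in the fiber over $\infty$ as a multi-scale differential with the combinatorial parameters of Lemma~\ref{genus1lemma}, and the local ramification computation at the vertical points. Your Abel--Jacobi setup over the generic point and the flatness argument are a fine way to frame the problem, but without those two ingredients the proposal establishes the setup and the plausibility of the answer, not the theorem.
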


\begin{proof}
We will compute the degree of $\pi^{-1}(\infty)$, where $\infty \in \overline{\mathcal{M}}_{1,1}$ is the boundary point that parameterizes the rational nodal curve. This is the number of multi-scale differentials in the boundary of $\PP\RR_1(\mu)$, counted up to multiplicity. A multi-scale differential $\overline{X}$ over $\infty$ is of the form $X(t,\tau, {\bf C},[(u,v)])$. Note that every element of $\overline{\mathcal{M}}_{1,1}$ has an involution, corresponding to the group inversion. At the point $\infty$, this involution is given by the automorphism interchanging the two points over the node after normalization. So we may label these two points to resolve orbifoldness of $\overline{\mathcal{M}}_{1,1}$. 

If $t=0$, then the underlying curve of $\overline{X}$ is the rational nodal curve. So the map $\pi$ is not ramified at these points. Here, two simple poles at the nodes are labeled since it comes from the node of $\infty$. There are $p!$ choices for $\tau$ and $b_i-1$ choices for each $C_i$. Thus we have $p!B$ such differentials. 

If $t>0$, then the underlying curve is the banana curve, the union of two rational curves intersecting at two nodes $s_1$ and $s_2$. After forgetting all the poles, this is a semi-stable nodal curve as one rational component only contains two special points, which are the nodes. For given $t$, $\tau$ and ${\bf C}$, the number of prongs at the nodes $s_1$ and $s_2$ are given by $Q_1\coloneqq \sum_{i=1}^t C_{\tau(i)}$ and $Q_2\coloneqq \sum_{i=1}^t [b_{\tau(i)}-C_{\tau(i)}]$, respectively. There are $Q_1 Q_2$ prong-matchings and the number of prong-matching equivalence classes is equal to $\gcd(Q_1,Q_2)$.

Denote $a_1\coloneqq \lcm(Q_1,Q_2)/Q_1$ and $a_2\coloneqq \operatorname{lcm}(Q_1,Q_2)/Q_2$. Let $U\subset \PP\RR_1(\mu)$ be a local coordinate neighborhood parameterized by $u$ centered at $\overline{X}$. Then the induced family of semi-stable curve $f_{\pi} :\chi\to \PP\RR_1(\mu)$ is given by the local equations of the form $x_1 y_1=u^{a_1}$ at $s_1$, and $x_2 y_2=c u^{a_2}$ with some $|c|=1$ at $s_2$. By Lemma~5.1 of \cite{CG}, $\pi$ is ramified at $\overline{X}$ of degree $a_1+a_2=(Q_1+Q_2)/\gcd(Q_1,Q_2)$. Recall from Remark~\ref{multiplecount} that each multi-scale differential is counted $t$ times by labeling of the saddle connections of the top level component. Here, two nodes $s_1,s_2$ are considered labeled. Therefore, up to multiplicity, the number of multi-scale differentials on the banana curve over $\infty \in \overline{\mathcal{M}}_{1,1}$ is equal to $$\sum_{t=1}^p \frac{1}{t} \sum_{\tau} \sum_{1\leq C_i\leq b_i-1} (Q_1+Q_2) = \sum_\tau \sum_{t=1}^p \frac{1}{t}  \sum_{\bf C} \sum_{i=1}^t b_{\tau(i)}=p! B\left(\sum_{i=1}^p b_i\right)=a p!B.$$
Therefore the degree of $\pi$ is $(a+1)p!B$.
\end{proof}

\begin{lem}
Fix $r~\vert~d$. Let $\RC_{\geq r}\subset \PP\overline{\RR}_1(\mu)$ be the union of connected components with rotation number divisible by $r$. Then the degree of the forgetful map $\pi$ restricted to $\RC_{\geq r}$ is equal to 
$$\frac{1}{r^2}p![(a+1)B-(-1)^p]+p!(-1)^p.$$
\end{lem}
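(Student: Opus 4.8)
The plan is to evaluate $\deg(\pi|_{\RC_{\geq r}})$ as the number of boundary multi-scale differentials lying over $\infty\in\overline{\mathcal M}_{1,1}$ whose rotation number is divisible by $r$, each weighted by the ramification index of $\pi$ computed in the proof of Theorem~\ref{thm:degree}. First I would record the rotation numbers: for a vertical point $X(t,\tau,{\bf C},[(0,v)])$ with $t>0$, Proposition~\ref{rot} gives $\rho=\gcd(d,Q_1,S+v)$ where $S=\sum_{i=1}^p C_i$, while for a horizontal point $X(0,\tau,{\bf C})$ the same computation yields $\rho=\gcd(d,S)$, independent of $\tau$. Since $r\mid d$ and $d\mid b_i$, we have $r\mid b_i$ for every $i$; hence $r\mid Q_1$ already forces $r\mid Q_2$ and $r\mid\gcd(Q_1,Q_2)$. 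Thus for $t>0$ the condition $r\mid\rho$ is equivalent to $r\mid Q_1$ together with $r\mid S+v$, and exactly $\gcd(Q_1,Q_2)/r$ of the $\gcd(Q_1,Q_2)$ prong-matching classes satisfy the latter.

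Next I would run the enumeration with a roots-of-unity filter. Using $r\mid b_i$ (so that $\sum_{C=1}^{b_i-1}\zeta^{kC}=-1$ for $\zeta=e^{2\pi i/r}$ and $k\not\equiv0\pmod r$), one gets $\#\{{\bf C}:r\mid\sum_{i=1}^t C_{\tau(i)}\}=\frac1r\bigl[B+(r-1)(-1)^t\prod_{i>t}(b_{\tau(i)}-1)\bigr]$. The horizontal ($t=0$) points contribute $\Sigma_0=p!\,\#\{{\bf C}:r\mid S\}=\frac{p!}{r}\bigl[B+(r-1)(-1)^p\bigr]$. Each admissible vertical datum $(t,\tau,{\bf C})$ with $r\mid Q_1$ contributes $\frac1t\cdot\frac{\gcd(Q_1,Q_2)}{r}\cdot\frac{Q_1+Q_2}{\gcd(Q_1,Q_2)}=\frac{Q_1+Q_2}{tr}$; summing over $t,\tau,{\bf C}$ with $Q_1+Q_2=\sum_{i=1}^t b_{\tau(i)}$ and using the same filter gives $\Sigma_{>0}=\frac{p!aB}{r^2}+\frac{r-1}{r^2}T_2$, where
\[
T_2=\sum_{t=1}^p(-1)^t(t-1)!(p-t)!\sum_{|S'|=t}\Bigl(\sum_{i\in S'}b_i\Bigr)\prod_{j\notin S'}(b_j-1).
\]
A short rearrangement shows that the desired formula is equivalent to the single identity $T_2=p!\bigl((-1)^p-B\bigr)$.

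The main obstacle is this identity, which I would establish by an integral–generating-function device rather than a direct manipulation of the weights $(t-1)!(p-t)!$. Writing $(t-1)!(p-t)!=p!\int_0^1 x^{t-1}(1-x)^{p-t}\,dx$ and setting $G(z)=\sum_{S'\subseteq[p]}(\sum_{i\in S'}b_i)z^{|S'|}\prod_{j\notin S'}(b_j-1)=z\sum_i b_i\prod_{j\neq i}(z+b_j-1)$, the inner sum over $t$ becomes $\frac{(1-x)^p}{x}G\!\left(\frac{-x}{1-x}\right)$ after extracting coefficients of $G$. Using $\frac{-x}{1-x}+b_j-1=b_j-\frac{1}{1-x}$ this equals $-\sum_i b_i\prod_{j\neq i}(b_j(1-x)-1)$, so after the substitution $y=1-x$ and recognizing the integrand as $\Phi'(y)$ with $\Phi(y)=\prod_j(b_jy-1)$ we obtain
\[
T_2=-p!\int_0^1\Phi'(y)\,dy=-p!\bigl[\Phi(1)-\Phi(0)\bigr]=-p!\bigl[B-(-1)^p\bigr]=p!\bigl((-1)^p-B\bigr).
\]
Substituting back and collecting the $B$- and $(-1)^p$-terms over the common denominator $r^2$ yields $\Sigma_0+\Sigma_{>0}=\frac{1}{r^2}p!\bigl[(a+1)B-(-1)^p\bigr]+p!(-1)^p$. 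The only remaining care is the bookkeeping for small $p$, where Remark~\ref{multiplecount} records extra hyperelliptic relabelings; as in Theorem~\ref{thm:degree}, labeling the two points over the node of $\infty$ rigidifies $\overline{\mathcal M}_{1,1}$ and these symmetries leave the multiplicity-weighted count unchanged.
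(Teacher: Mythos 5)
Your proposal is correct, and its overall architecture coincides with the paper's: the same decomposition into horizontal ($t=0$) and vertical ($t>0$) boundary points over $\infty\in\overline{\mathcal{M}}_{1,1}$, the same rotation-number conditions $r\mid Q_1$ and $r\mid S+v$ giving $\gcd(Q_1,Q_2)/r$ admissible prong-matching classes, the same ramification weight $(Q_1+Q_2)/\gcd(Q_1,Q_2)$ with multiplicity $1/t$ from the labeled nodes, and the same reduction of the answer to one combinatorial identity. Where you genuinely diverge is in the two enumerative sub-steps. First, you count the admissible tuples ${\bf C}$ with $r\mid\sum_{i=1}^t C_{\tau(i)}$ by a roots-of-unity filter (valid because $r\mid d\mid b_i$ makes $\sum_{C=1}^{b_i-1}\zeta^{kC}=-1$), whereas the paper does an inclusion-exclusion over the sets $A_k=\{C_k=0\}$ inside the larger set $0\le C_i\le b_i-1$; both yield $\frac1r\bigl[B+(r-1)(-1)^t\prod_{i>t}(b_{\tau(i)}-1)\bigr]$. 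Second, and more substantially, for the identity $T_2=p!\bigl((-1)^p-B\bigr)$ the paper splits $b_{\tau(i)}=(b_{\tau(i)}-1)+1$, rewrites each piece as a subset sum weighted by $(t-1)!(p-t+1)!$ resp.\ $t!(p-t)!$, and observes that the resulting sequence telescopes when summed over $t$; you instead represent $(t-1)!(p-t)!=p!\int_0^1x^{t-1}(1-x)^{p-t}\,dx$, package the subset sums into the generating function $G(z)=z\sum_ib_i\prod_{j\ne i}(z+b_j-1)$, and reduce $T_2$ to $-p!\int_0^1\Phi'(y)\,dy$ with $\Phi(y)=\prod_j(b_jy-1)$, evaluated by the fundamental theorem of calculus. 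I checked the substitution $\frac{-x}{1-x}+b_j-1=\frac{b_j(1-x)-1}{1-x}$ and the final assembly over the denominator $r^2$; both are correct. The paper's telescoping is more elementary and self-contained; your Beta-integral device is slicker, explains the cancellation conceptually (the alternating sum is an exact derivative), and would adapt more readily to variants of the weights $(t-1)!(p-t)!$. Your closing remark on small $p$ is also handled correctly: as in the paper, labeling the two preimages of the node removes the extra symmetries, so no correction terms arise in this count.
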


\begin{proof}
Fix $1\leq t\leq p$ and $\tau\in S_{p}$. Consider multi-scale differentials of the form $\overline{X}=X(t,\tau,{\bf C},[(0,v)])$. For a given ${\bf C}$, there are $\gcd(Q_1,Q_2)$ choices of prong-matching equivalence classes. They are represented by $(0,v)$ for $v=1,\dots, \gcd(Q_1,Q_2)$. 

Using Proposition~\ref{rot}, the rotation number of $X(t,\tau,{\bf C},[(0,v)])$ is given by $\gcd(d,Q_1,\sum_{i=1}^p C_i +v)$. If this is divisible by $r$, then ${\bf C}$ satisfies $r~\vert~\sum_{i=1}^t C_{\tau(i)}$ and $v$ satisfies $r~\vert~\sum_{i=1}^p C_i +v$. Note that there are always $\frac{1}{r}\gcd(Q_1,Q_2)$ choices for $v$. As in the proof of Theorem~\ref{thm:degree}, the map $\pi$ is ramified at these points to the degree $\frac{Q_1+Q_2}{\gcd(Q_1,Q_2)}$. Therefore, the number $D_1$ of two-level multi-scale differentials (up to multiplicity) in $\overline{\RC}_{\geq r}$ over $\infty\in \overline{\mathcal{M}}_{1,1}$ is equal to

$$\frac{1}{r}\sum_{t,\tau} \sum_{{\bf C},r~\vert~\sum_{i=1}^t C_i} \frac{1}{t} (Q_1+Q_2) = \frac{1}{r}\sum_{t,\tau} \sum_{{\bf C},r~\vert~\sum_{i=1}^t C_{\tau(i)}} \frac{1}{t}\sum_{i=1}^t b_{\tau(i)}.$$

Thus we need to count the number of choices for $1\leq C_{\tau(i)}\leq b_{\tau(i)}-1$, $i=1,\dots,t$, such that $r~\vert~\sum_{i=1}^t C_{\tau(i)}$. For convenience, we assume that $\tau=Id$. We claim that this number is equal to $\frac{1}{r}\left[\prod_{i=1}^t (b_i-1) -(-1)^t\right]+(-1)^t$. In order to compute this, first consider $A\coloneqq \{(C_j)|0\leq C_i\leq b_i-1,r|\sum_{i=1}^t C_i\}$, a set slightly larger than what we are looking for. The set $A$ has $\frac{1}{r}\prod_{i=1}^t b_i $ elements, since whenever $C_1,\dots,C_{t-1}$ are fixed, there are exactly $\frac{b_t}{r}$ choices for $C_t$ such that $r~\vert~\prod_{i=1}^t C_i$. Denote $A_k=\coloneqq \{(C_j)\in A|C_k=0\}$. Then $A_k$ has $\frac{1}{rb_i} \prod_{i=1}^t b_i$ elements. Similarly, for any proper subset $I\subset \{1,\dots, t\}$, the set $\{(C_j)\in A|C_k=0,k\in I\}=\cap_{k\in I}A_k$ has $\frac{1}{r}\prod_{i\notin I} b_i $ elements. Therefore, the number we want can be computed by the following alternating sum $$|A|-\sum_{i=1}^t |A_i|+\sum_{i\neq j} |A_i\cap A_j|+\dots+(-1)^t = \frac{1}{r}\left[\prod_{i=1}^t (b_i-1) -(-1)^t\right]+(-1)^t.$$

Now we can continue to compute $$D_1=\frac{1}{r}\sum_{t,\tau} \frac{1}{t}\sum_{i=1}^t b_{\tau(i)}\left(\frac{1}{r}\left[\prod_{i=1}^t (b_{\tau(i)}-1) -(-1)^t\right]+(-1)^t\right)\prod_{j=t+1}^p (b_{\tau(j)}-1)$$ $$=\frac{1}{r^2}\sum_{t,\tau}\frac{1}{t} B \sum_{i=1}^t b_{\tau(i)} +\frac{r-1}{r^2}\sum_{t,\tau}\frac{(-1)^t}{t}\sum_{i=1}^t b_{\tau(i)} \prod_{j=t+1}^p (b_{\tau(j)}-1).$$

The first term is equal to $\frac{1}{r^2}ap!B$. It remains to compute the second term. Now we fix $t$, and consider $$\sum_{\tau}\frac{(-1)^t}{t}\sum_{i=1}^t b_{\tau(i)} \prod_{j=t+1}^p (b_{\tau(j)}-1) = \sum_{\tau}\left[\frac{(-1)^t}{t}\sum_{i=1}^t (b_{\tau(i)}-1) \prod_{j=t+1}^p (b_{\tau(j)}-1) + (-1)^t \prod_{j=t+1}^p (b_{\tau(j)}-1)\right].$$

The first term in the sum is symmetric, and it computes $$(-1)^t (t-1)!(p-t+1)! \sum_{|I|=p-t+1} \prod_{i\in I}(b_i-1)$$ where $I\subset \{1,\dots, p\}$ runs over all subsets with $p-t+1$ elements. The second term in the sum is also symmetric, and this computes $$(-1)^t t! (p-t)! \sum_{|J|=p-t} \prod_{i\in J}(b_i-1)$$ where $J\subset \{1,\dots, p\}$ runs over all subsets with $p-t$ elements. Thus by taking sum over $t$, most terms are canceled and we obtain $p!(-B+(-1)^p)$. Finally, we have $$\frac{1}{r^2}ap!B + \frac{r-1}{r^2}p!(-B+(-1)^p) = \frac{1}{r^2}p!\left[(a+1)B-(-1)^p\right]-\frac{1}{r}p!(B-(-1)^p).$$

The number $D_2$ of horizontal multi-scale differentials in $\overline{\RC}_{\geq r}$ over $\infty \in \overline{M}_{1,1}$ is equal to 

$$\sum_{\tau} \sum_{{\bf C},r|\sum_{i=1}^p C_i} 1=\frac{1}{r}p!(B-(-1)^p)+p!(-1)^p.$$ Therefore, the degree $D_1+D_2$ of $\pi$ restricted to $\RC_{\geq r}$ is equal to $$ \frac{1}{r^2}p!\left[(a+1)B-(-1)^p\right] + p!(-1)^p.$$
\end{proof}

As a result, we can compute the degree of $\pi$ restricted to the union $\RC_r\subset \PP\overline{\RR}_1(\mu)$ of components with given rotation number:
$$\frac{1}{r^2} p![(a+1)B-(-1)^p]\prod_{q|\frac{d}{r}} \left(1-\frac{1}{q^2}\right)$$ where $q$ runs over all prime divisors of $\frac{d}{r}$. 

Combined with Theorem~\ref{thm:degree}, this proves Theorem~\ref{main3}.\newline

In particular, the degree of the map $\RC_1\to \mathcal{M}_{1,1}$ is equal to $$p![(a+1)B-(-1)^p]\prod_{q~\vert~d} \left(1-\frac{1}{q^2}\right).$$ Observe that this formula directly generalizes the degree formula $N^2\prod_{q~\vert~N}\left(1-\frac{1}{q^2}\right)$ of the forgetful map from a modular curve $X_{1}(N)$. 

\begin{rmk}
It is easy to check this formula generalizes already known cases below:
\begin{itemize}
    \item The degree of $\PP\RR_1(a+2,-2,-a) \to \mathcal{M}_{1,1}$ is equal to $2(a+3)(a-1)$, as given in \cite{CG}.
    \item The degree of $\PP\RR_1(a,-a) \to \mathcal{M}_{1,1}$ is equal to $(a+1)(a-1)$, which is the number of nontrivial $a$-torsion points on an elliptic curve. 
    \item The degree of $\PP\RR_1(2p,-2^p)\to \mathcal{M}_{1,1}$ is equal to $p!(2p+1)$. By forgetting the labeling of the double poles, we obtain the degree equal to $2p+1$. This is computed in \cite{EGMP}. 
\end{itemize}
\end{rmk}

\begin{rmk}
Instead of the unique zero, we can mark the pole of order $b_i$ and obtain a similar forgetful map $\pi_i : \PP\RR_1(\mu)\to \mathcal{M}_{1,1}$. In fact, the proofs of Theorem~\ref{thm:degree} and Theorem~\ref{main3} do not use the fact that the zero is the marked point. Therefore, the degree of the map $\pi_i$ is equal to the degree of $\pi$, for any $i=1,\dots,p$. This can be also deduced from the fact that for any two points $x_1,x_2$ on an elliptic curve $E$ with a fixed identity element $e\in E$, there is an isomorphism between curves with one marked point $(E,x_1)$ and $(E,x_2)$ given by the translation $x\mapsto x+(x_2-x_1)$. So for general $(E,x)$, there is a bijection between the elements of $\pi^{-1}(E,x)$ and $\pi_i^{-1}(E,x)$. 
\end{rmk}

\section{The exceptional stratum $\RR_1(12,-3,-3,-3,-3)$}\label{sec:Except}

Following results of \cite{Lee}, it is already known that $\RR_1(12,-3,-3,-3,-3)$ has $3$ hyperelliptic components and a unique non-hyperelliptic component with rotation number $1$. In order to prove Theorem~\ref{main1}, it remains to prove that there are exactly {\em two} non-hyperelliptic components with rotation number $3$. 

\begin{proof}[Proof of Theorem~\ref{main1}]
Let $\RC$ be a non-hyperelliptic component of $\PP\overline{\RR}_1(12,-3,-3,-3,-3)$ with rotation number $3$. Up to relabeling the poles, $\RC$ contains a multi-scale differential of the form $X(4,Id, {\bf C}, [(0,v)])$. By Proposition~\ref{rot}, this satisfies $\sum_{i=1}^4 C_i=6$ and $v=0$ or $3$. Also, since $[(0,3)]=[(C_1,3-C_1)]$, we have $X(4,Id, {\bf C}, [(0,3)])=X(4,(1,2,3,4), {\bf C},[(0,0)])$. So we may assume $v=0$.

Moreover, if $C_1+C_4=C_2+C_3=3$, then $X(4,Id,{\bf C},[(0,0)])$ is contained in the boundary of hyperelliptic components. Also, $X(4,Id,(1,2,2,1),[(0,0)])=X(4,(41)(23),(2,1,1,2),[(0,0)])$. Therefore, we can conclude that $\RC$ contains $X(4,\tau,(1,2,2,1),[(0,0)])$ for some $\tau\in S_{4}$. Let $X_{\tau}$ denote this multi-scale differential. We claim that $X_{\tau_1} = X_{\tau_2}$ if and only if $\tau_1 \circ \tau_2^{-1}\in A_{4} \trianglelefteq S_{4}$. 

Consider $X_{Id}=X(4,Id,(1,2,2,1),[(0,0)])$. It is a cusp of a cell of cylinder type, which we denote by $\mathrm{Cyl}(1,2,3,4)$. It is bounded by one equatorial arc, denoted by $DA(1,2,3,4)$. Note that the numbers are labels of the poles. We can think of the cells and arcs with the same translation structure with other labeling. Now we will describe incidence relations between the cells and the arcs, starting from $DA(1,2,3,4)$, until we exhaust all cells. The cells are in Figure~\ref{fig_cells} and the arcs are listed in Figures~\ref{fig_arcs}~and~\ref{fig_arcs2}.

\begin{figure}
    \centering
    \input{diagrams/cells} 
    \caption{The cells of the connected component} \label{fig_cells}
\end{figure}

\begin{figure}
    \centering
    \input{diagrams/arcs} 
    \caption{The arcs of the connected component} \label{fig_arcs}
\end{figure}

\begin{figure}
    \centering
    \input{diagrams/arcs2} 
    \caption{The arcs of the connected component (continued)} \label{fig_arcs2}
\end{figure}

\begin{itemize}
    \item The arc $DA(1,2,3,4)$ also bounds a cell of 3-degenerate type, denoted by $U(1,2,3,4)$. This cell is also bounded by two more arcs, denoted by $DB(1,2,3,4)$ and $DC(1,2,3,4)$. 
    \item The arc $DB(1,2,3,4)$ also bounds a cell of 1-triangle type, denoted by $T(1,2,3,4)$. This cell is also bounded by two more arcs, denoted by $DE(1,2,3,4)$ and $DF(1,2,3,4)$.
    \item The arc $DB(1,2,3,4)$ also bounds a cell of 1-triangle type, denoted by $S(1,2,3,4)$. This cell is also bounded by two more arcs, denoted by $DG(1,2,3,4)$ and $DH(1,2,3,4)$.
    \item The arc $DE(1,2,3,4)$ also bounds a cell of 4-degenerate type, denoted by $Q(1,2,3,4)$. This cell is also bounded by one more arc, denoted by $DI(1,2,3,4)$.
    \item The arc $DF(1,2,3,4)$ also bounds a cell of 3-degenerate type, denoted by $R(1,2,3,4)$. This cell is also bounded by two more arcs. One of them is $DH(4,3,2,1)$ (i.e. $DH$ with a different labeling). The other one is denoted by $DJ(1,2,3,4)$.
    \item The arc $DG(1,2,3,4)$ also bounds a cell of 4-degenerate type, denoted by $P(1,2,3,4)$. This cell is also bounded by one more arc, denoted by $DK(1,2,3,4)$. 
    \item The arc $DJ(1,2,3,4)$ also bounds a cell of 2-triangle type, denoted by $N(1,2,3,4)$. This cell is also bounded by two more arcs, $DI(1,3,4,2)$ and $DK(1,3,4,2)$. 
\end{itemize}

So we exhausted all cells of the connected component containing $X_{Id}$, up to relabeling the poles. The incidence relation between the cells and the arcs, up to labeling, is depicted in Figure~\ref{fig_graph}.

\begin{figure}
    \centering
    \tikzset{every picture/.style={line width=0.75pt}} %set default line width to 0.75pt        

\begin{tikzpicture}[x=0.75pt,y=0.75pt,yscale=-1,xscale=1]
%uncomment if require: \path (0,285); %set diagram left start at 0, and has height of 285

%Shape: Regular Polygon [id:dp8169941743389795] 
\draw  [fill={rgb, 255:red, 200; green, 200; blue, 200 }  ,fill opacity=1 ] (203.72,164.48) -- (166.22,142.82) -- (203.72,121.17) -- cycle ;
%Shape: Regular Polygon [id:dp029099392670357993] 
\draw  [fill={rgb, 255:red, 200; green, 200; blue, 200 }  ,fill opacity=1 ] (274.6,86.81) -- (237.1,65.16) -- (274.6,43.51) -- cycle ;
%Shape: Regular Polygon [id:dp13593056563764416] 
\draw  [fill={rgb, 255:red, 200; green, 200; blue, 200 }  ,fill opacity=1 ] (274.6,244.81) -- (237.1,223.16) -- (274.6,201.51) -- cycle ;
%Shape: Regular Polygon [id:dp6749954208089806] 
\draw  [fill={rgb, 255:red, 200; green, 200; blue, 200 }  ,fill opacity=1 ] (376.6,145.66) -- (339.1,167.31) -- (339.1,124.01) -- cycle ;
%Shape: Square [id:dp5382201565065137] 
\draw  [fill={rgb, 255:red, 200; green, 200; blue, 200 }  ,fill opacity=1 ] (429.04,39.16) -- (452.52,62.65) -- (429.04,86.13) -- (405.56,62.65) -- cycle ;

%Shape: Regular Polygon [id:dp9937234370296026] 
\draw  [fill={rgb, 255:red, 200; green, 200; blue, 200 }  ,fill opacity=1 ] (523.1,165.81) -- (485.6,144.16) -- (523.1,122.51) -- cycle ;
%Straight Lines [id:da9261352803805545] 
\draw    (111.16,143) -- (166.22,142.82) ;
%Straight Lines [id:da9605191745740091] 
\draw    (237.1,65.16) -- (203.72,121.17) ;
%Straight Lines [id:da43430412836514254] 
\draw    (405.56,62.65) -- (274.6,43.51) ;
%Shape: Square [id:dp9462049002118917] 
\draw  [fill={rgb, 255:red, 200; green, 200; blue, 200 }  ,fill opacity=1 ] (429.81,193.32) -- (453.29,216.81) -- (429.81,240.29) -- (406.32,216.81) -- cycle ;

%Straight Lines [id:da18595466719873022] 
\draw    (274.6,86.81) -- (339.1,124.01) ;
%Straight Lines [id:da8099800200861667] 
\draw    (203.72,164.48) -- (237.1,223.16) ;
%Straight Lines [id:da7334732905422598] 
\draw    (274.6,201.51) -- (339.1,167.31) ;
%Straight Lines [id:da15325077877994642] 
\draw    (376.6,145.66) -- (485.6,144.16) ;
%Straight Lines [id:da49844469706617334] 
\draw    (452.52,62.65) -- (523.1,122.51) ;
%Straight Lines [id:da2759537326100667] 
\draw    (453.29,216.81) -- (523.1,165.81) ;
%Straight Lines [id:da7070256436224636] 
\draw    (274.6,244.81) -- (406.32,216.81) ;

% Text Node
\draw  [fill={rgb, 255:red, 200; green, 200; blue, 200 }  ,fill opacity=1 ]  (81.6,134.16) -- (110.6,134.16) -- (110.6,152.16) -- (81.6,152.16) -- cycle  ;
\draw (96.1,143.16) node   [align=left] {$\displaystyle Cyl$};
% Text Node
\draw (191.22,142.82) node    {$U$};
% Text Node
\draw (263.25,63.71) node    {$T$};
% Text Node
\draw (429.14,61.21) node    {$Q$};
% Text Node
\draw (430.6,216.71) node    {$P$};
% Text Node
\draw (262.91,221.71) node    {$S$};
% Text Node
\draw (352.92,144.21) node    {$R$};
% Text Node
\draw (511.92,142.71) node    {$N$};
% Text Node
\draw (128.04,118.2) node [anchor=north west][inner sep=0.75pt]    {$DA$};
% Text Node
\draw (188.84,84.2) node [anchor=north west][inner sep=0.75pt]    {$DB$};
% Text Node
\draw (190.84,189) node [anchor=north west][inner sep=0.75pt]    {$DC$};
% Text Node
\draw (336.04,33.8) node [anchor=north west][inner sep=0.75pt]    {$DE$};
% Text Node
\draw (303.64,84.6) node [anchor=north west][inner sep=0.75pt]    {$DF$};
% Text Node
\draw (328.84,211) node [anchor=north west][inner sep=0.75pt]    {$DG$};
% Text Node
\draw (284.04,168.2) node [anchor=north west][inner sep=0.75pt]    {$DH$};
% Text Node
\draw (480.44,69) node [anchor=north west][inner sep=0.75pt]    {$DI$};
% Text Node
\draw (421.64,125.4) node [anchor=north west][inner sep=0.75pt]    {$DJ$};
% Text Node
\draw (466.84,173.8) node [anchor=north west][inner sep=0.75pt]    {$DK$};
% Text Node
\draw (240.04,124) node [anchor=north west][inner sep=0.75pt]    {$\begin{pmatrix}
1 & 2 & 3 & 4\\
4 & 3 & 2 & 1
\end{pmatrix}$};
% Text Node
\draw (368.84,89.2) node [anchor=north west][inner sep=0.75pt]    {$\begin{pmatrix}
1 & 2 & 3 & 4\\
1 & 3 & 4 & 2
\end{pmatrix}$};
% Text Node
\draw (365.64,160.8) node [anchor=north west][inner sep=0.75pt]    {$\begin{pmatrix}
1 & 2 & 3 & 4\\
1 & 3 & 4 & 2
\end{pmatrix}$};
% Text Node
\draw (218.04,131.6) node [anchor=north west][inner sep=0.75pt]    {$\circlearrowleft $};
% Text Node
\draw (346.84,96.4) node [anchor=north west][inner sep=0.75pt]    {$\circlearrowleft $};
% Text Node
\draw (346.44,170.4) node [anchor=north west][inner sep=0.75pt]    {$\circlearrowleft $};

\end{tikzpicture} 
    \caption{Incidence relation of the cells and the arcs} \label{fig_graph}
\end{figure}

When we travel around a cycle of the graph, the labeling of the pole is changed by the permutation written inside each cycle. The permutations in the graph generates $A_{4}$. Therefore, we can conclude that $X_{Id}$ and $X_{\tau}$ are contained in the same connected component if and only if $\tau\in A_{4}$. This proves that there are two non-hyperelliptic connected components of $\RR_1(12,-3,-3,-3,-3)$, corresponding to the cosets in $S_{4}/A_{4}$. It is obvious that the two connected components have same canonical complex projective structure, since they just differ by the labeling of the poles. 
\end{proof}


\begin{thebibliography}{100}

\bibitem{BBCMMW}
M. Brandt, J. Bruce, M. Chan, M. Melo, G. Moreland, C. Wolfe.
\newblock {\em On the Top-Weight Rational Cohomology of $\mathcal{A}_g$}.
\newblock {Geom. Topol.}, Volume 28, Issue 2, 497–538,
  2024.

\bibitem{BCGGM}
M. Bainbridge, D. Chen, Q. Gendron, S. Grushevsky, M. M\"{o}ller.
\newblock {\em The moduli space of multi-scale differentials}.
\newblock {ArXiv:1910.13492}, 2019.

\bibitem{Bo}
C. Boissy.
\newblock {\em Connected components of the strata of the moduli space of meromorphic differentials}.
\newblock {Comment. Math. Helv.}, Volume 90, Issue 2, 255-286,
  2015.

\bibitem{CCW}
C.-L. Chai, C.-S. Lin, C.-L. Wang.
\newblock {\em Mean field equations, hyperelliptic
curves and modular forms: I}.
\newblock {Camb. J. Math.}, Volume 3, Issue 1-2 127-274, 2015.

\bibitem{CDF}
G. Calsamiglia, B. Deroin, and S. Francaviglia.
\newblock{\em A transfer principle: from periods to isoperiodic foliations}.
\newblock{Geom. Funct. Anal}, Volume 33, Issue 4, 57-169, 2023.

\bibitem{CF}
D. Chen and G. Faraco.
\newblock{\em Period realization of meromorphic differentials with prescribed invariants}.
\newblock {Forum Math. Sigma}, Volume 12, Paper No. e90,
  2024.

\bibitem{CG}
A. Castorena, Q. Gendron.
\newblock {\em On the locus of genus 3 curves that admit meromorphic differentials with a zero of order 6 and a pole of order 2}.
\newblock {Ann. Inst. Fourier (Grenoble)}, Volume 72, 261-299, 2022.

\bibitem{CGP}
M. Chan, S. Galatius, S. Payne.
\newblock {\em Tropical curves, graph complexes, and top weight cohomology of $\mathcal {M}_g$}.
\newblock {J. Amer. Math. Soc.}, Volume 34, Issue 1, 
2020.

\bibitem{CGPT}
D. Chen, Q. Gendron, M. Prado, G. Tahar.
\newblock {\em Isoresidual curves}.
\newblock {ArXiv:2412.16810}, 2024.

\bibitem{Du}
D. Dumas.
\newblock {\em Complex projective structures}.
\newblock {Handbook of Teichm\"{u}ller theory}, Volume II, 439-586, 2009.

\bibitem{EGMP}
A. Eremenko, A. Gabrielov, G. Mondello, D. Panov.
\newblock {\em Moduli spaces for Lam\'{e} functions and Abelian integrals of the second kind}.
\newblock {Commun. Contemp. Math}, Volume 24, Issue 2, 1-68,
  2022.

\bibitem{Fa}
G. Faltings.
\newblock {\em Real projective structures on Riemann surfaces}.
\newblock {Compos. Math.}, Volume 48, Issue 2, 223-269,
  1983.

\bibitem{FTZ}
G. Faraco, G. Tahar, Y. Zhang.
\newblock {\em Isoperiodic foliation of the stratum $\mathcal{H}(1,1,-2)$}.
\newblock {ArXiv:2305.06761}, 2023.

\bibitem{GT}
Q. Gendron, G. Tahar.
\newblock {\em Abelian differentials with prescribed singularities}.
\newblock {J. \'{E}c. polytech. Math}, Volume 8, 1397-1428, 2021.

\bibitem{GT1}
Q. Gendron and G. Tahar.
\newblock {\em Isoresidual fibration and resonance arrangements}.
\newblock {Lett. Math. Phys.}, Volume 112, 33,
  2022.

\bibitem{Lee}
M. Lee.
\newblock {\em Connected components of strata of residueless meromorphic differentials}.
\newblock {Geom. Dedicata}, Volume 218, 101,
  2024.

\bibitem{Ma}
R. Maier.
\newblock {\em Lamé polynomials, hyperelliptic reductions and Lamé band structure}.
\newblock {Philos. Trans. Roy. Soc. A}, Volume 366, 1115-1153,
  2008.

\bibitem{MoMu}
M. M\"{o}ller, S. Mullane.
\newblock {\em Teichm\"{u}ller curves in hyperelliptic components of meromorphic strata}.
\newblock {ArXiv:2305.03309}, 2023.

\bibitem{Ta}
G. Tahar.
\newblock {\em Counting saddle connections in flat surfaces with poles of higher order}.
\newblock {Geom. Dedicata}, Volume 196, Issue 1, 145-186,
  2018.
  
\bibitem{Ta1}
G. Tahar.
\newblock {\em Chamber structure of modular curves $X_{1}(N)$}.
\newblock {Arnold Math. J.}, Issue 4, 459–481,
  2018.
  
\bibitem{Ta2}
G. Tahar.
\newblock {\em Veech groups of flat surfaces with poles}.
\newblock {Ann. Fac. Sci. Toulouse Math. (6)}, 29, Issue 1, 57–78,
  2020.
  
\bibitem{Zo}
A. Zorich.
\newblock {\em Flat Surfaces}.
\newblock {Frontiers in Physics, Number Theory and Geometry}, 439-586,
  2006.


\end{thebibliography}
\end{document}